\documentclass[12pt]{amsart}
\usepackage{stmaryrd}
\usepackage{}
\usepackage{amsmath}
\usepackage{amsfonts}
\usepackage{amssymb}
\usepackage[all,cmtip]{xy}           
\usepackage{shuffle}
\usepackage{bbding}
\usepackage{txfonts}
\usepackage[shortlabels]{enumitem}
\usepackage{ifpdf}
\ifpdf
\usepackage[colorlinks,final,backref=page,hyperindex]{hyperref}
\else
\usepackage[colorlinks,final,backref=page,hyperindex,hypertex]{hyperref}
\fi
\usepackage{tikz}
\usepackage[active]{srcltx}

\makeatletter

\newtheorem{theorem}{Theorem}[section]
\newtheorem{prop}[theorem]{Proposition}
\newtheorem{lemma}[theorem]{Lemma}
\newtheorem{coro}[theorem]{Corollary}
\newtheorem{prop-def}{Proposition-Definition}[section]

\theoremstyle{definition}
\newtheorem{defn}[theorem]{Definition}

\newtheorem{remark}[theorem]{Remark}
\newtheorem{exam}[theorem]{Example}

\newcommand{\nc}{\newcommand}

\newcommand {\emptycomment}[1]{}

\nc{\delete}[1]{{}}
\nc{\mmargin}[1]{}

\nc{\mlabel}[1]{\label{#1}}  
\nc{\mcite}[1]{\cite{#1}}  
\nc{\mref}[1]{\ref{#1}}  
\nc{\meqref}[1]{\eqref{#1}}  
\nc{\mbibitem}[1]{\bibitem{#1}} 

\delete{
	\nc{\mlabel}[1]{\label{#1}  
		{\hfill \hspace{1cm}{\bf{{\ }\hfill(#1)}}}}
	\nc{\mcite}[1]{\cite{#1}{{\bf{{\ }(#1)}}}}  
	\nc{\mref}[1]{\ref{#1}{{\bf{{\ }(#1)}}}}  
	\nc{\meqref}[1]{\eqref{#1}{{\bf{{\ }(#1)}}}}  
	\nc{\mbibitem}[1]{\bibitem[\bf #1]{#1}} 
}

\newcommand{\g}{\mathfrak g}

\newcommand{\bk}{{\mathbf{k}}}

\nc{\splie}{{\rhd\!\!\rhd}}

\nc{\vep}{\varepsilon}
\nc{\bin}[2]{ (_{\stackrel{\scs{#1}}{\scs{#2}}})}  
\nc{\binc}[2]{(\!\! \begin{array}{c} \scs{#1}\\
		\scs{#2} \end{array}\!\!)}  
\nc{\bincc}[2]{  ( {\scs{#1} \atop
		\vspace{-1cm}\scs{#2}} )}  
\nc{\oline}[1]{\overline{#1}}
\nc{\mapm}[1]{\lfloor\!|{#1}|\!\rfloor}
\nc{\bs}{\bar{S}}
\nc{\cast}{{\,\mbox{\raisebox{.8pt}{$\scriptstyle \circledast$}}\,}}
\nc{\la}{\longrightarrow}
\nc{\ot}{\otimes}
\nc{\rar}{\rightarrow}
\nc{\dar}{\downarrow}
\nc{\dap}[1]{\downarrow \rlap{$\scriptstyle{#1}$}}
\nc{\defeq}{\stackrel{\rm def}{=}}
\nc{\dis}[1]{\displaystyle{#1}}
\nc{\dotcup}{\ \displaystyle{\bigcup^\bullet}\ }
\nc{\hcm}{\ \hat{,}\ }
\nc{\hts}{\hat{\otimes}}
\nc{\hcirc}{\hat{\circ}}
\nc{\lleft}{[}
\nc{\lright}{]}
\nc{\llb}{\llbracket}
\nc{\rrb}{\rrbracket}
\nc{\curlyl}{\left \{ \begin{array}{c} {} \\ {} \end{array}
	\right .  \!\!\!\!\!\!\!}
\nc{\curlyr}{ \!\!\!\!\!\!\!
	\left . \begin{array}{c} {} \\ {} \end{array}
	\right \} }
\nc{\longmid}{\left | \begin{array}{c} {} \\ {} \end{array}
	\right . \!\!\!\!\!\!\!}
\nc{\ora}[1]{\stackrel{#1}{\rar}}
\nc{\ola}[1]{\stackrel{#1}{\la}}
\nc{\scs}[1]{\scriptstyle{#1}} \nc{\mrm}[1]{{\rm #1}}
\nc{\dirlim}{\displaystyle{\lim_{\longrightarrow}}\,}
\nc{\invlim}{\displaystyle{\lim_{\longleftarrow}}\,}
\nc{\dislim}[1]{\displaystyle{\lim_{#1}}} \nc{\colim}{\mrm{colim}}
\nc{\mvp}{\vspace{0.3cm}} \nc{\tk}{^{(k)}} \nc{\tp}{^\prime}
\nc{\ttp}{^{\prime\prime}} \nc{\svp}{\vspace{2cm}}
\nc{\vp}{\vspace{8cm}}
\nc{\modg}[1]{\!<\!\!{#1}\!\!>}
\nc{\intg}[1]{F_C(#1)}
\nc{\lmodg}{\!<\!\!}
\nc{\rmodg}{\!\!>\!}
\nc{\cpi}{\widehat{\Pi}}
\nc{\labs}{\mid\!}
\nc{\rabs}{\!\mid}
\nc{\btr}{\blacktriangleright}

\nc{\ad}{\mrm{ad}}
\nc{\rRB}{\mathsf{rRB}}
\nc{\cocrRB}{\mathsf{cocrRB}}
\nc{\PH}{\mathsf{PH}}
\nc{\cocPH}{\mathsf{cocPH}}
\nc{\ann}{\mrm{ann}}
\nc{\Aut}{\mrm{Aut}}
\nc{\Der}{\mrm{Der}}
\nc{\Sym}{\mrm{Sym}}
\nc{\br}{\mrm{bre}}
\nc{\can}{\mrm{can}}
\nc{\Cont}{\mrm{Cont}}
\nc{\rchar}{\mrm{char}}
\nc{\cok}{\mrm{coker}}
\nc{\de}{\mrm{dep}}
\nc{\dtf}{{R-{\rm tf}}}
\nc{\dtor}{{R-{\rm tor}}}

\nc{\Dif}{\mrm{Diff}}
\nc{\Div}{\mrm{Div}}
\nc{\End}{\mrm{End}}
\nc{\Ext}{\mrm{Ext}}
\nc{\Fil}{\mrm{Fil}}
\nc{\Fr}{\mrm{Fr}}
\nc{\Frob}{\mrm{Frob}}
\nc{\Gal}{\mrm{Gal}}
\nc{\GL}{\mrm{GL}}
\nc{\Gr}{\mrm{Gr}}
\nc{\Hom}{\mrm{Hom}}
\nc{\Hoch}{\mrm{Hoch}}
\nc{\hsr}{\mrm{H}}
\nc{\hpol}{\mrm{HP}}
\nc{\id}{\mrm{id}}
\nc{\im}{\mrm{im}}
\nc{\inv}{\mrm{inv}}
\nc{\Id}{\mrm{Id}}
\nc{\ID}{\mrm{ID}}
\nc{\Irr}{\mrm{Irr}}
\nc{\incl}{\mrm{incl}}
\nc{\length}{\mrm{length}}
\nc{\NLSW}{\mrm{NLSW}}
\nc{\Lie}{\mrm{Lie}}
\nc{\mchar}{\rm char}
\nc{\mpart}{\mrm{part}}
\nc{\ql}{{\QQ_\ell}}
\nc{\qp}{{\QQ_p}}
\nc{\rank}{\mrm{rank}}
\nc{\rcot}{\mrm{cot}}
\nc{\rdef}{\mrm{def}}
\nc{\rdiv}{{\rm div}}
\nc{\rtf}{{\rm tf}}
\nc{\rtor}{{\rm tor}}
\nc{\res}{\mrm{res}}
\nc{\SL}{\mrm{SL}}
\nc{\Spec}{\mrm{Spec}}
\nc{\tor}{\mrm{tor}}
\nc{\Tr}{\mrm{Tr}}
\nc{\tr}{\mrm{tr}}
\nc{\wt}{\mrm{wt}}

\nc{\bfk}{{\bf k}}
\nc{\bfone}{{\bf 1}}
\nc{\bfzero}{{\bf 0}}
\nc{\detail}{\marginpar{\bf More detail}
	\noindent{\bf Need more detail!}
	\svp}
\nc{\gap}{\marginpar{\bf Incomplete}\noindent{\bf Incomplete!!}
	\svp}
\nc{\FMod}{\mathbf{FMod}}
\nc{\Int}{\mathbf{Int}}
\nc{\Mon}{\mathbf{Mon}}
\nc{\remarks}{\noindent{\bf Remarks: }}
\nc{\Rep}{\mathbf{Rep}}
\nc{\Rings}{\mathbf{Rings}}
\nc{\Sets}{\mathbf{Sets}}
\nc{\Diff}{\mathbf{Diff}}
\nc{\Inte}{\mathbf{Inte}}
\nc{\U}{\mathrm{U}}
\nc{\uu}{\mathrm{u}}

\nc{\BA}{{\mathbb A}}   \nc{\CC}{{\mathbb C}}
\nc{\DD}{{\mathbb D}}   \nc{\EE}{{\mathbb E}}
\nc{\FF}{{\mathbb F}}   \nc{\GG}{{\mathbb G}}
\nc{\HH}{{\mathbb H}}   \nc{\LL}{{\mathbb L}}
\nc{\NN}{{\mathbb N}}   \nc{\PP}{{\mathbb P}}
\nc{\QQ}{{\mathbb Q}}   \nc{\RR}{{\mathbb R}}
\nc{\TT}{{\mathbb T}}   \nc{\VV}{{\mathbb V}}
\nc{\ZZ}{{\mathbb Z}}   \nc{\TP}{\widetilde{P}}

\nc{\cala}{{\mathcal A}}    \nc{\calc}{{\mathcal C}}
\nc{\cald}{\mathcal{D}}     \nc{\cale}{{\mathcal E}}
\nc{\calf}{{\mathcal F}}    \nc{\calg}{{\mathcal G}}
\nc{\calh}{{\mathcal H}}    \nc{\cali}{{\mathcal I}}
\nc{\call}{{\mathcal L}}    \nc{\calm}{{\mathcal M}}
\nc{\caln}{{\mathcal N}}    \nc{\calo}{{\mathcal O}}
\nc{\calp}{{\mathcal P}}    \nc{\calr}{{\mathcal R}}

\nc{\cals}{{\mathcal S}}    \nc{\calt}{{\Omega}}
\nc{\calv}{{\mathcal V}}    \nc{\calw}{{\mathcal W}}
\nc{\calx}{{\mathcal X}}

\nc{\fraka}{{\mathfrak a}}
\nc{\frakb}{\mathfrak{b}}
\nc{\frakg}{{\frak g}}
\nc{\frakl}{{\frak l}}
\nc{\fraks}{{\frak s}}
\nc{\frakB}{{\frak B}}
\nc{\frakm}{{\frak m}}
\nc{\frakM}{{\frak M}}
\nc{\frakp}{{\frak p}}
\nc{\frakW}{{\frak W}}
\nc{\frakX}{{\frak X}}
\nc{\frakS}{{\frak S}}
\nc{\frakA}{{\frak A}}
\nc{\frakx}{{\frakx}}

\newcommand{\lieg}{\mathfrak{g}} 
\newcommand{\lieh}{\mathfrak{h}} 
\newcommand{\liel}{\mathfrak{l}} 
\newcommand{\liec}{\mathfrak{c}}

\newcommand{\ra}{\rightarrow} 

\newcommand{\on}[1]{\operatorname{#1}}

\newcommand{\yn}[1]{\textcolor{orange}{#1 }}
\nc{\ynr}[1]{\textcolor{orange}{\underline{Yunnan:}#1 }}

\nc{\lir}[1]{\textcolor{red}{\underline{Li:}#1 }}

\delete{
	\input cyracc.def

	\newtheorem{theorem}{Theorem}[section]
	\newtheorem{lemma}[theorem]{Lemma}

	\theoremstyle{definition}

	\theoremstyle{remark}
	\newtheorem{remark}[theorem]{Remark}

	\allowdisplaybreaks
	
	\numberwithin{equation}{section}
	
}

\begin{document}

\title[On restricted Rota-Baxter Lie algebras of arbitrary weight]
{On restricted Rota-Baxter Lie algebras of arbitrary weight}

\author{Yunnan Li}
\address{School of Mathematics and Information Science, Guangzhou University,
Guangzhou 510006, China}
\email{ynli@gzhu.edu.cn}

\author{Ke Ou}
\address{School of Statistics and Mathematics, Yunnan University of Finance and Economics, Kunming 650221, China}
\email{keou@ynufe.edu.cn}

\begin{abstract}
Recently, Ehret and Gilliers introduced the notion of a (trivially) restricted post-Lie algebra, {recovering} the concepts of a restricted Lie algebra and a restricted pre-Lie algebra.
In this paper, we specifically introduce restricted Rota-Baxter Lie algebras of arbitrary weight {with an intrinsic graph subalgebra characterization}. We show that, via the splitting property, they give rise to restricted post-Lie algebras, and furthermore possess a novel replication property.
We then present two natural constructions of such restricted Rota-Baxter structures in prime characteristic: one arising from Rota-Baxter associative algebras of arbitrary weight, and the other from Rota-Baxter Lie algebras of weight $1$ .
The Rota-Baxter $p$-envelopes of a Rota-Baxter Lie algebra are also examined.
\end{abstract}

\keywords{restricted Lie algebra, Rota-Baxter Lie algebra, post-Lie algebra\\
\qquad 2020 Mathematics Subject Classification. 17B50, 17B38, 17D25}

\maketitle

\tableofcontents

\allowdisplaybreaks

\section{Introduction}

Restricted Lie algebras, a fundamental structure in the theory of Lie algebras over fields of positive characteristic, were first introduced and systematically studied by N. Jacobson in 1937 (\cite{Ja1}). Jacobson's pioneering work laid the foundation for understanding how the additional $p$-map (denoted $x \mapsto x^{[p]}$) naturally obtained from the Frobenius map in associative algebras of characteristic $p > 0$.
The significance of restricted Lie algebras cannot be overstated. They play a predominant role in modular Lie algebra theory, intimately connecting the simple Lie algebras with algebraic groups, Hopf algebras, and representation theory. Crucially, finite-dimensional Lie algebras that arise ``in nature'' are typically restricted. Classical examples include the {commutator} Lie algebra {or the derivation algebra} associated with any associative algebra,
the Lie algebra of an algebraic group, and the primitive Lie subalgebra of an Hopf algebra. This ubiquity, coupled with technical tools like the Jordan-Chevalley-Seligman decomposition available in the restricted setting, makes them indispensable for studying Lie-theoretic structures in positive characteristic. So far, a number of restricted algebraic structures, extending the notion of a restricted Lie algebra to other types of algebras, have been introduced. These include restricted Poisson algebras~\cite{BK0,BYZ}, restricted pre-Lie algebras~\cite{Bu,Dz,Do}, restricted Lie-Rinehart algebras~\cite{Do1}, and restricted post-Lie algebras~\cite{EG}.

This paper focuses on restricted versions of two closely related algebraic structures: the post-Lie algebra and the Rota-Baxter Lie algebra. The concept of a post-Lie algebra was originally introduced by Vallette in the context of Koszul duality of operads in \cite{Val}. Beyond its algebraic origin, post-Lie algebra structures can be derived in differential geometry from affine flat connections with constant torsion. They also play significant parts in several applied fields, including numerical integration on manifolds \cite{ML} and the theory of regularity structures for stochastic partial differential equations \cite{BK,JZ}.

As a post-Lie structure over an abelian Lie algebra, a pre-Lie algebra originates from Cayley's 1896 work on rooted tree algebras. In the literature, this algebraic structure is also referred to as a Vinberg algebra or a left-symmetric algebra in the context of convex homogeneous cones, and as a Gerstenhaber algebra or a right-symmetric algebra in the setting of deformation theory. A significant technical development was later introduced by Guin and Oudom in~\cite{OG}, who constructed a certain universal enveloping algebra for a pre-Lie algebra. This construction was subsequently extended to post-Lie algebras and has found deep applications in the Lie-Butcher theory of numerical methods~\cite{ELM,Ra}.

On the other hand, a Rota-Baxter operator of weight 0 on a Lie algebra provides an operator form of the classical Yang-Baxter equation~\cite{STS}. The correct framework for extending classical $r$-matrices via their operator forms was introduced by Bordemann and later termed an $\mathcal{O}$-operator by Kupershmidt~\cite{Bor,Ku}. Bai, Guo, and Ni~\cite{BGN} further generalized this notion by introducing $\mathcal{O}$-operators of weight $\lambda$ (for a constant $\lambda$) and extended $\mathcal{O}$-operators, which notably encompass Rota-Baxter operators of weight $\lambda$ on Lie algebras and have applications to the study of Lie bialgebras. In particular, the Rota-Baxter identity of weight 1 on a Lie algebra is equivalent to the modified classical Yang-Baxter equation.
Importantly, the authors in~\cite{Ag,BGN} pointed out a splitting property: a Rota-Baxter operator, or more generally an $\mathcal{O}$-operator of weight $\lambda$ on a Lie algebra, naturally induces an underlying algebraic structure, either a pre-Lie algebra or {further} a post-Lie algebra.

Although thousands of papers have explored diverse aspects of Rota-Baxter algebras, to the best of our knowledge, very few have examined Rota-Baxter (associative or Lie) algebras in positive characteristic. Any new phenomenon observed beyond the case of characteristic 0 would naturally invite further investigation. In particular, the compatibility of a Rota-Baxter operator with a restricted Lie algebra structure is of paramount importance; we anticipate that many basic properties of Rota-Baxter Lie algebras can be carried over to restricted Rota-Baxter Lie algebras. Moreover, establishing natural constructions for such algebras is essential to validate the coherence and relevance of this extended framework.
{Simultaneously}, the notion of a restricted post-Lie algebra remained mysterious until very recently. In~\cite{EG}, Ehret and Gilliers adapted the Guin-Oudom procedure to post-Lie algebras in characteristic $p$, thereby establishing a prototype for restricted post-Lie algebras. Inspired by their work, we introduce in this paper the notion of a restricted Rota-Baxter Lie algebra of weight $\lambda$, especially unifying the cases of weight 0 and of weight 1. We study its fundamental properties, examine in detail its relationship with restricted post-Lie algebras, and provide auxiliary constructions.

The paper is organized as follows. In Section~\ref{sec:rla-pla}, we review the definitions of restricted Lie algebras, their restricted enveloping algebras, as well as post-Lie algebras and their universal enveloping algebras.

Section~\ref{sec:rpla} begins by recalling the notion of a (trivially) restricted post-Lie algebra introduced by Ehret and Gilliers. We then present two constructions of such algebras: one via the universal $p$-envelopes of post-Lie algebras using the Guin-Oudom extension (Theorem~\ref{thm:pla-rpla}), and another via restricted action post-Lie algebras arising from derivation actions of restricted Lie algebras on commutative algebras (Theorem~\ref{thm:rapla}).

In Section~\ref{sec:rrla}, we first recall Rota-Baxter (Lie) algebras and then introduce the central notion of a restricted Rota-Baxter Lie algebra of arbitrary weight, {which can be intrinsically interpreted by graph subalgebras (Theorem~\ref{thm:rrb-graph})}. A characterization of idempotent restricted Rota-Baxter operators on restricted Lie algebras is provided (Theorem~\ref{thm:idem-rrb}). Furthermore, we extend the novel splitting and replication properties of Rota-Baxter Lie algebras to the restricted setting. Specifically, every restricted Rota-Baxter Lie algebra naturally induces a splitting restricted post-Lie algebra structure (Theorem~\ref{thm:rpl-rrb}) and a descendent restricted Rota-Baxter Lie algebra (Theorem~\ref{thm:des-rrb}). We also establish a Rota-Baxter enhancement of Jacobson's classical theorem on the restrictability of Lie algebras in characteristic $p$ (Theorem~\ref{thm:restrictability}).

To substantiate the concept of a restricted Rota-Baxter Lie algebra, we furnish two natural examples. First, the commutator Rota-Baxter Lie algebra associated to a Rota-Baxter algebra of arbitrary weight in characteristic $p$ carries a restricted structure (Theorem~\ref{thm:rba-rrbl}). Second, the primitive Lie subalgebra of the universal enveloping algebra of a  Rota-Baxter Lie algebra of weight $1$ in characteristic $p$ admits a restricted Rota-Baxter operator (Theorem~\ref{thm:rrbl}), and further serves as a universal Rota-Baxter $p$-envelope of such a Rota-Baxter Lie algebra (Theorem~\ref{thm:urbe}).

{\bf Notation.} Let $\bk$ be an algebraically closed base field of prime characteristic $p$.
All the objects under discussion, including vector spaces, algebras and tensor products, are taken over $\bk$ by default. Also, we denote $[n]\coloneqq \{1,\dots,n\}$ for any positive integer $n$.

\section{Restricted Lie algebras and post-Lie algebras}\label{sec:rla-pla}
This section first recalls the concept of a restricted Lie algebra and its associated restricted enveloping algebra.
\begin{defn}[\cite{Ja}]\label{defn:rLie}
A {\bf restricted} Lie algebra $(\g,[-,-],(-)^{[p]})$ over a field $\bk$
of characteristic $p> 0$ is a Lie algebra $\g=(\g,[-,-])$ over $\bk$ together with a
map $(-)^{[p]}: \g \rightarrow \g$ called the {\bf $p$-map} such that the following relations hold:
\begin{align}
\label{eq:rl-1}
(\alpha x)^{[p]} &=  \alpha^{p}x^{[p]},\\
\label{eq:rl-2}
[x^{[p]},y ]     &=  [\underbrace{x,[x,[\cdots [x}_{p},y]\cdots ]]], \\
\label{eq:rl-3}
(x+y)^{[p]} &= x^{[p]}+y^{[p]}+\sum_{i=1}^{p-1}s_{i}(x,y),
\end{align}
where $x,y \in \g$, $\alpha \in \bk$ and $i s_{i}(x,y)$ is the coefficient of $t^{i-1}$ in the
formal $(p-1)$-fold product $$[\underbrace{t x+y,[\cdots [t x
+y}_{p-1},x]\cdots]]\in\bk[t]\otimes \g.$$
\end{defn}

In \cite{Ja} Jacobson gave the following basic example of restricted Lie algebras.
\begin{exam}\label{ex:ass-rla}
Let $A$ be an associative algebra over a field $\bk$
of characteristic $p>0$. The {\bf commutator Lie algebra} of $A$, denoted by $A^-$ and equipped with the $p$-th power map $(-)^p$, is a restricted Lie algebra. Moreover,
the Lie algebra $\Der(A)$ of derivations on $A$  equipped with the $p$-th power map $(-)^p$ is also a restricted Lie algebra. See also \cite[\S 2.1]{SF}.

Especially, if $A$ is a commutative associative algebra in characteristic $p$. For any $a\in A$ and $D\in \Der(A)$, $aD$ is also a derivation on $A$ and Eq.~(1.4) in \cite{Do1} tells us that
\begin{equation}\label{eq:p-derivation}
(aD)^p=a^pD^p+(aD)^{p-1}(a)D,
\end{equation}
where $a\in A$ serves as the left multiplication of $a$ on $A$. Such a prototype $(A,\Der(A),\id,(-)^p)$ leads to the general notion of a restricted Lie-Rinehart algebra; see~\cite[Definition~1.7]{Do1}.
\end{exam}

For a restricted Lie algebra $(\g,[-,-],(-)^{[p]})$, let $\U(\g)$ be the universal enveloping algebra of the Lie algebra $(\g,[-,-])$. Then in $\U(\g)$, we have
$$[x^p,y]\ =\ x^py-yx^p\ =\ (L_x^p-R_x^p)y \ =\ (L_x-R_x)^py\ = \ (\ad_x)^py 
\ \stackrel{\eqref{eq:rl-2}}{=}\ [x^{[p]},y],\quad x,y\in\g,$$
where $L_x$ and $R_x$ are the left multiplication and the right multiplication of $x$ on $\U(\g)$ respectively. So $x^p-x^{[p]}$ is a central primitive element in $\U(\g)$.

\begin{defn}[\cite{Ja}]
The {\bf restricted enveloping algebra} of a restricted Lie algebra $(\g,[-,-],(-)^{[p]})$ is the quotient algebra
$$\uu(\g)=\U(\g)/(x^p-x^{[p]}\,|\,x\in \g).$$
\end{defn}

\begin{exam}\label{ex:uea-rla}
Let $\g=(\g,[-,-])$ be a Lie algebra over a field $\bk$
of characteristic $p>0$. According to \cite[Proposition~5.5.3]{Mon}, the primitive Lie subalgebra $P(\U(\g))=\{X\in \U(\g)\,|\,\Delta(X)=X\otimes 1+1\otimes X\}$ of its universal enveloping algebra $\U(\g)$ is given by
$$\g^{(p)}\coloneqq {\rm span}_\bk\{x^{p^k}\in \U(\g)\,|\,x\in \g,\ k\geq0\},$$
with its Lie bracket $[-,-]$ coming from the commutator of $\U(\g)$.
Hence, $(\g^{(p)},[-,-],(-)^p)$ is a restricted Lie subalgebra of $(\U(\g)^-,(-)^p)$,
and its restricted enveloping algebra $\uu(\g^{(p)})$ is isomorphic to $\U(\g)$ as Hopf algebras. The tuple $(\g^{(p)},(-)^p,i)$ is also a universal $p$-envelope of the Lie algebra $\g$, where $i:\g\to \g^{(p)}$ is the natural inclusion; see~\cite[Theorem 5.2]{SF}.

\end{exam}

Next we recall the notion of a post-Lie algebra and their universal enveloping algebras.
\begin{defn}\cite{Val}
A {\bf post-Lie algebra} $(\g ,[-,-] ,\rhd)$ consists of a Lie algebra $(\g,[-,-] )$ and a binary product $\rhd:\g \otimes \g \to \g $ such that
\begin{eqnarray}
\label{Post-L-1}x\rhd[y,z] &=&[x\rhd y,z] +[y,x\rhd z] ,\\
\label{Post-L-2}([x,y] +x\rhd y-y\rhd x)\rhd z&=&x\rhd(y\rhd z)-y\rhd(x\rhd z).
\end{eqnarray}
When $(\g ,[-,-] )$ is abelian, $(\g ,[-,-] ,\rhd)$ is a {\bf pre-Lie algebra}.
Any post-Lie algebra $(\g,[-,-] ,\rhd)$ has a {\bf subjacent Lie algebra}
$\g_\rhd\coloneqq (\g,\llb-,-\rrb)$
defined by
\begin{equation}\label{eq:post-L-sub}
\llb x,y\rrb \coloneqq x\rhd y-y\rhd x+[x,y],\quad\forall x,y\in \g.
\end{equation}

\end{defn}

Given a post-Lie algebra $(\g,[-,-],\rhd)$,
let $\U(\g)$ be the universal enveloping algebra of the Lie algebra $(\g,[-,-])$. It has the usual coshuffle Hopf algebra structure $(\U(\g),\cdot,\Delta_\shuffle,S)$.
Recall that the coshuffle coproduct $\Delta_\shuffle$ is defined by $\Delta_\shuffle(x)=x\otimes 1+1\otimes x$ for any $x\in \g$. More generally, the iterated coshuffle coproduct on $\U(\g)$ is given by
\begin{equation}\label{eq:cosh_coproduct}
\Delta_\shuffle^{(r-1)}(x_1\cdots x_m) =
\sum_{\pi} x_{B_1}\otimes\cdots\otimes x_{B_r},\quad \forall x_1,\dots, x_m\in \g,\ r\geq1,
\end{equation}
where the sum ranges over all partitions $\pi$ of $[m]$ into a tuple $(B_1,\dots,B_{r})$ of $r$ possibly empty subsets, and we use the notation $x_I\coloneqq x_{i_1}\cdots x_{i_r}\in \U(\g)$ for any $I=\{i_1<\cdots< i_r\}\subseteq[m]$. In particular, $x_{\emptyset}=1$.
On the other hand, the antipode $S$ is given by
$$S(x_1 x_2 \cdots x_m)=(-1)^m x_m x_{m-1} \cdots x_1,\quad\forall x_1,x_2,\dots,x_m\in \g.$$

By the Guin-Oudom construction~\cite{ELM,OG}, the post-Lie product $\rhd$ on $\g$ can be extended to $\U(\g)$, so that $(\U(\g),\cdot\,,\Delta_\shuffle,S,\rhd)$ is a post-Hopf algebra introduced in \cite[Definition~2.1]{LST}, namely
\begin{eqnarray}
\label{Post-1}x\rhd (y\cdot z)&=&(x_1\rhd y)\cdot(x_2\rhd z),\\
\label{Post-2}x\rhd (y\rhd z)&=&\big(x_1\cdot(x_2\rhd y)\big)\rhd z
\end{eqnarray}
for any $x,y,z\in \U(\g)$, where we use the abbreviated Sweedler notation $\Delta(x)=x_1\otimes x_2$, and the linear map $\alpha_\rhd:\U(\g)\to \End(\U(\g))$ defined by
$$\alpha_{\rhd, x} y= x\rhd y,\quad\forall x,y\in \U(\g),$$
has a convolution inverse $\beta_\rhd$ such that $\alpha_{\rhd,x_1}\beta_{\rhd,x_2}=\beta_{\rhd,x_1}\alpha_{\rhd,x_2}=\vep(x)\id_{\U(\g)}$ for any $x\in \U(\g)$.

Correspondingly, there exists a {\bf subjacent Hopf algebra}
$$\U(\g)_\rhd \coloneqq (\U(\g),*_\rhd,\Delta_\shuffle,S_\rhd),$$
where the generalized Grossman-Larson product $*_\rhd$ and the antipode $S_\rhd$ are defined as follows,
\begin{eqnarray}
\label{post-rbb-1}x *_\rhd  y&\coloneqq&x_1\cdot (x_2\rhd y),\\
\label{post-rbb-2}S_\rhd(x)&\coloneqq&\beta_{\rhd,x_1}(S(x_2))
\end{eqnarray}
for any $x,y\in \U(\g)$.

Moreover, there is a Hopf algebra isomorphism $\phi:\U(\frakg_\rhd)\to \U(\frakg)_\rhd$ defined by $$\phi(x_1\cdots x_r)=x_1 *_\rhd \cdots *_\rhd x_r,\quad \forall x_1,\dots,x_r\in \frakg.$$
We call $\phi$ {\bf the Guin-Oudom isomorphism}.

\medskip
For the universal enveloping algebra $\U(\g)$ of a post-Lie algebra $(\g,[-,-],\rhd)$ over $\bk$, the formula (18) in \cite{EMM} tells us that
$$x^{*_\rhd n}=\sum_{\pi}
x^{\rhd |B_1|}\cdots x^{\rhd |B_r|},\quad x\in \g^{(p)},$$
where the sum ranges over all set partitions $\pi=(B_1,\dots,B_r)$ of $[n]$ with blocks $B_1,\dots, B_r$ satisfying $\max B_1<\cdots<\max B_r$, and
$x^{\rhd k}\coloneqq\overbrace{x \rhd (x \rhd \cdots (x}^{k-1}\rhd x)\cdots)$, $k\geq1$. 
Namely, we have
\begin{equation}\label{eq:gl-expand}
x^{*_\rhd n}=\sum_{\alpha\vDash n}C_\alpha x^{\rhd \alpha},\quad x\in \g^{(p)},
\end{equation}
where the sum ranges over all compositions $\alpha=(\alpha_1,\dots,\alpha_r)$ of $n$, $x^{\rhd \alpha}\coloneqq x^{\rhd \alpha_1}\cdots x^{\rhd \alpha_r}$ and
\begin{equation}\label{eq:set-part}
C_\alpha={\alpha_1-1\choose \alpha_1-1}{\alpha_1+\alpha_2-1\choose \alpha_2-1}\cdots{n-1\choose \alpha_r-1}=\dfrac{(n-1)!}{(\alpha_1-1)!\cdots(\alpha_r-1)!}
\prod_{k=1}^{r-1}(\alpha_1+\cdots+\alpha_k)^{-1}
\end{equation}
counts the number of set partitions $\pi=(B_1,\dots,B_r)$ of $[n]$ with blocks $B_1,\dots,B_r$ satisfying $\max B_1<\cdots<\max B_r$ and $|B_i|=\alpha_i$ for all $i\geq1$.

\medskip
By an analog in characteristic $p$ of the Dynkin-Specht-Wever Theorem~(e.g. \cite[Chapter V, Theorem 8]{Ja}), Eq.~\eqref{eq:gl-expand} can be rewritten as the following useful formula for the $p$-th power $x^{*_\rhd p}$; see also~\cite[Theorem~2.3]{EG}.
\begin{lemma}
For the universal enveloping algebra $\U(\g)$ of a post-Lie algebra $(\g,[-,-],\rhd)$ in characteristic $p>0$, the following equality holds:
\begin{equation}\label{eq:glproduct}
x^{*_\rhd p}=x^p+\sum_{r=1}^{p-1}\dfrac{1}{r}\sum_{\alpha\vDash p\atop\ell(\alpha)=r}C_\alpha[x^{\rhd \alpha}],\quad  x\in \g^{(p)},
\end{equation}
where the sum ranges over all compositions $\alpha=(\alpha_1,\dots,\alpha_r)$ of $p$ with length $r<p$ and
$$[x^{\rhd \alpha}]
\coloneqq [x^{\rhd \alpha_1},[x^{\rhd \alpha_2},[\cdots[x^{\rhd \alpha_{r-1}},x^{\rhd \alpha_r}]\cdots]]].$$
Also, the fraction $\frac{1}{r}$ therein is interpreted as the inverse of $r$ modulo $p$.
\end{lemma}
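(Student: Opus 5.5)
Start from Eq.~\eqref{eq:gl-expand} with $n=p$, $x^{*_\rhd p}=\sum_{\alpha\vDash p}C_\alpha x^{\rhd\alpha}$, and split it into two parts. The composition $\alpha=(1,\dots,1)$ has $C_\alpha=1$ and contributes $x^p$. The composition $\alpha=(p)$ of length one has $C_{(p)}=1$ and contributes $x^{\rhd p}$; this matches the $r=1$ term $\frac{1}{1}C_{(p)}[x^{\rhd(p)}]=x^{\rhd p}$ of \eqref{eq:glproduct}. What remains is to show
\[
\sum_{\alpha\vDash p,\ 2\le \ell(\alpha)\le p-1}C_\alpha\, x^{\rhd\alpha}=\sum_{r=2}^{p-1}\frac1r\sum_{\ell(\alpha)=r}C_\alpha[x^{\rhd\alpha}]
\]
in $\U(\g)$. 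The natural approach is to lift this to a free, characteristic-free setting and then apply the Dynkin--Specht--Wever type theorem of \cite[Chapter V, Theorem 8]{Ja}.

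The lift goes as follows. Consider the free associative algebra $\mathbb{Z}\langle y_1,y_2,\dots\rangle$, graded by giving $y_k$ degree $k$, and set $Y\coloneqq\sum_{\alpha\vDash n}C_\alpha y_{\alpha_1}\cdots y_{\alpha_r}$, the homogeneous piece of degree $n=p$. The key structural input is that $x^{*_\rhd n}$ is the $n$-th power in the Grossman--Larson algebra. Since $x\in\g^{(p)}$ is primitive and $\U(\g)_\rhd$ is a Hopf algebra with primitive $x$, the generating series $\sum_n x^{*_\rhd n}t^n/n!$ is grouplike; over $\QQ$ its logarithm is primitive. More concretely, I would use the known identity (from \cite{EMM}/\cite{EG}) that, over $\QQ$, $\sum_{\alpha}C_\alpha y_\alpha$ with $|\alpha|=n$ equals $\frac{1}{(n-1)!}$ times a polynomial whose non-trivial parts are Lie. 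In degree $p$, the element $Z\coloneqq Y - y_1^p$ is a Lie polynomial with integer coefficients in the $y_k$'s. Indeed, $x^{*_\rhd p}-x^p$ is primitive in characteristic $p$, since both $x^{*_\rhd p}$ and $x^p$ are $p$-th powers of a primitive element in two Hopf algebras sharing the same coproduct $\Delta_\shuffle$.

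Given primitivity, apply the characteristic-$p$ Dynkin--Specht--Wever theorem. In the free associative algebra over $\bk$, a homogeneous primitive element of degree $m$ not divisible by $p$ (in the total word-length grading) is recovered as $\frac1m$ times its right-normed bracketing $[\,\cdot\,]$. For $2\le r\le p-1$, the length-$r$ part of $Z$ is homogeneous of word length $r$, which is prime to $p$. Applying the Dynkin map word-by-word then gives $\sum_{\ell(\alpha)=r}C_\alpha y_\alpha=\frac1r\sum_{\ell(\alpha)=r}C_\alpha[y_\alpha]$. Here one uses that the primitivity of $Z$ splits over the word-length grading: the coproduct with $y_k$ primitive preserves word length. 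Specializing $y_k\mapsto x^{\rhd k}$ through the algebra map to $\U(\g)$ yields \eqref{eq:glproduct}.

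The main obstacle is establishing that $Z$, or each of its length-$r$ components for $2\le r\le p-1$, is primitive in the free algebra in which the $y_k$ are primitive. Primitivity of the specialization in $\U(\g)$ is not enough, because we need the identity before specializing. My first attempt would be to observe that the generic post-Lie algebra is the free post-Lie algebra on one generator $x$. In its enveloping algebra, the elements $x^{\rhd k}$ are primitive and algebraically free; this would follow from the PBW theorem together with the freeness of the free post-Lie algebra as a Lie algebra on the planar-tree generators. Then $x^{*_\rhd p}-x^p$ is primitive in the free Hopf algebra $T(\mathrm{span}\{x^{\rhd k}\})$ reduced mod $p$, which gives exactly what is needed. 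Alternatively, as the paper indicates, one may simply cite \cite[Theorem~2.3]{EG}.
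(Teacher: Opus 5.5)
Your route is the paper's: expand $x^{*_\rhd p}$ by \eqref{eq:gl-expand} and apply the characteristic-$p$ Dynkin--Specht--Wever theorem to each word-length component. Your free post-Lie argument correctly supplies the primitivity step that the paper leaves to \cite{EG}.

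One correction. The claim that $Z=Y-y_1^p$ is a Lie polynomial with integer coefficients is false. For $p=3$, $Z=2y_1y_2+y_2y_1+y_3$ has coproduct cross terms $3(y_1\otimes y_2+y_2\otimes y_1)\neq 0$ over $\mathbb{Q}$, so it is not primitive there. Only its reduction $[y_2,y_1]+y_3$ mod $3$ is Lie. So drop that sentence and the logarithm remark: primitivity holds only mod $p$, and that is exactly what your last paragraph proves.

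You can also get this mod-$p$ primitivity without free post-Lie algebras. The set-partition meaning of $C_\alpha$ gives $\Delta(Y_n)=\sum_k\binom{n}{k}Y_k\otimes Y_{n-k}$ in $\mathbb{Z}\langle y_1,y_2,\dots\rangle$: split the blocks of a max-ordered partition of $[n]$ into two groups. Hence $Y_p$ is primitive mod $p$ directly.
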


Also, we need the following lemma.
\begin{lemma}\label{2.7}
For a post-Lie algebra $(\g,[-,-],\rhd)$ over a field $\bk$ of characteristic $p$, the following formula holds for its universal enveloping algebra $\U(\g)$:
\begin{equation}\label{eq:der-uea}
y\rhd x^p=[\underbrace{x,[x,[\cdots [x}_{p-1},y\rhd x]\cdots ]]],\quad x,y\in \g^{(p)}.
\end{equation}
\end{lemma}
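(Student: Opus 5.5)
We prove Lemma~\ref{2.7}, i.e. Eq.~\eqref{eq:der-uea}, using Eq.~\eqref{Post-1} and the coshuffle coproduct. We may assume $y\rhd$ acts on $\U(\g)$ via its Guin-Oudom extension. The first step is to check that $y\in\g^{(p)}$ is primitive, as recalled in Example~\ref{ex:uea-rla}. Then Eq.~\eqref{Post-1} gives
\[
y\rhd(ab)=(y\rhd a)b+a(y\rhd b),
\]
using $1\rhd a=a$ and $y\rhd 1=\vep(y)1=0$. Hence $D\coloneqq y\rhd(-)$ is a derivation of the associative algebra $\U(\g)$.

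The second step is to expand $D(x^p)$ by the Leibniz rule:
\[
D(x^p)=\sum_{i=0}^{p-1}x^{i}\,D(x)\,x^{p-1-i}.
\]
The third step is to recognize this sum as an iterated commutator. In any associative algebra of characteristic $p$ we have
\[
\sum_{i=0}^{p-1}x^{i}ux^{p-1-i}=(\ad_x)^{p-1}(u),
\]
the classical identity underlying Jacobson's formula. To prove it, write $\ad_x=L_x-R_x$, with $L_x,R_x$ commuting. Then
\[
(L_x-R_x)^{p-1}=\sum_{i=0}^{p-1}\binom{p-1}{i}(-1)^{p-1-i}L_x^{i}R_x^{p-1-i},
\]
and $\binom{p-1}{i}\equiv(-1)^i \pmod p$. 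The coefficient is therefore $(-1)^{p-1}=1$, with $p=2$ trivial in characteristic 2. Taking $u=y\rhd x$ yields
\[
y\rhd x^p=[x,[x,\cdots[x,y\rhd x]\cdots]]
\]
with $p-1$ copies of $x$, where the brackets are commutators in $\U(\g)$, as desired.

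The only point needing care is the derivation property. It requires that elements of $\g^{(p)}$, not merely of $\g$, are primitive, and that the Guin-Oudom extension satisfies $y\rhd 1=\vep(y)1$ and $1\rhd a=a$. Both hold for the post-Hopf structure on $\U(\g)$, and primitivity of $\g^{(p)}=P(\U(\g))$ is exactly Example~\ref{ex:uea-rla}. Beyond this the argument is purely a characteristic-$p$ binomial computation.
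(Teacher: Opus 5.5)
Your proof is correct and follows the paper's argument. Both proofs expand $(\ad_x)^{p-1}=(L_x-R_x)^{p-1}$ using $\binom{p-1}{i}\equiv(-1)^i \pmod p$ to get $\sum_{i=0}^{p-1}x^i(y\rhd x)x^{p-1-i}$, and then identify this sum with $y\rhd x^p$ via Eq.~\eqref{Post-1}. You also spell out what the paper leaves implicit in citing Eq.~\eqref{Post-1}: primitivity of $y\in\g^{(p)}$ together with $1\rhd a=a$ makes $y\rhd(-)$ a derivation of $\U(\g)$.
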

\begin{proof}
For any $x,y\in \g^{(p)}$, we check Eq.~\eqref{eq:der-uea} as follows.
\begin{eqnarray*}
[\underbrace{x,[x,[\cdots [x}_{p-1},y\rhd x]\cdots ]]]&=&(L_x-R_x)^{p-1}(y\rhd x)\ =\ \sum_{i=0}^{p-1}{p-1\choose i}(-1)^iL_x^iR_x^{p-1-i}(y\rhd x)\\
&=& \sum_{i=0}^{p-1}L_x^iR_x^{p-1-i}(y\rhd x)\ =\ \sum_{i=0}^{p-1}x^i(y\rhd x)x^{p-1-i}\ \stackrel{\eqref{Post-1}}{=}\ y\rhd x^p,
\end{eqnarray*}
where the left multiplication $L_x$ and the right multiplication $R_x$ commute with each other.
\end{proof}

\section{Restricted post-Lie algebras and their constructions}\label{sec:rpla}

Now we recall the notion of a (trivially) restricted post-Lie algebra introduced by Ehret and Gilliers, and then discuss two kinds of constructions of them.
\begin{defn}[\cite{EG}]\label{defn:rpl}
A {\bf restricted post-Lie algebra} $(\g,[-,-],(-)^{[p]},\rhd)$  over a field $\bk$ of characteristic $p$ is a post-Lie algebra $(\g,[-,-],\rhd)$
such that $(\g,[-,-],(-)^{[p]})$ is a restricted Lie algebra and
\begin{align}
\label{eq:rpl-3}
y\rhd x^{[p]}  &= [\underbrace{x,[x,[\cdots [x}_{p-1},y\rhd x]\cdots ]]],\\
\label{eq:rpl-4}
x^{\llb p\rrb}\rhd y     &=  \underbrace{x \rhd (x \rhd \cdots (x}_{p}\rhd y)\cdots),
\end{align}
where the map $(-)^{\llb p\rrb}:\g\to \g$ is defined as follows referring to the RHS of Eq.~\eqref{eq:glproduct}:
\begin{align}
\label{eq:rpl-2}
x^{\llb p\rrb}  &= x^{[p]} +\sum_{r=1}^{p-1}\dfrac{1}{r}\sum_{\alpha\vDash p\atop\ell(\alpha)=r}C_\alpha[x^{\rhd \alpha}].
\end{align}
Namely, the following equality holds for central elements $x^p-x^{[p]}$ in $\U(\g)$:
\begin{equation}\label{eq:p-map-uea}
x^{*_\rhd p}-x^{\llb p\rrb}=x^p-x^{[p]},\quad x\in \g,
\end{equation}
where $*_\rhd$ is the generalized Grossman-Larson product of $\U(\g)_\rhd$ defined by Eq.~\eqref{post-rbb-1}. Also, Eqs.~\eqref{eq:rpl-3} and \eqref{eq:rpl-4} are equivalent to that the left multiplication $\alpha_{\rhd,x}$ is a restricted derivation such that $\alpha_{\rhd, x^{\llb p\rrb} }=\alpha_{\rhd,x}^p$.
\end{defn}

\begin{remark}
(1) Definition \ref{defn:rpl} of restricted post-Lie algebras is actually that of trivially restricted post-Lie algebra in \cite[Definition~3.1]{EG}. Removing the constraint \eqref{eq:rpl-2} (equivalent to Eq.~(2) in \cite{EG}), there is a general definition of restricted post-Lie algebras therein (\cite[Definition~3.8]{EG}). But we especially concern Eq.~\eqref{eq:rpl-2} in order to posing later our central notion of a restricted Rota-Baxter Lie algebra with naturally inherited properties from classical ones.

(2) For a restricted post-Lie algebra $(\g,[-,-],(-)^{[p]},\rhd)$ in characteristic $p>0$ such that $[-,-]=0$, Eq.~\eqref{eq:rpl-2} reduces to
$$x^{\llb p\rrb}=x^{[p]}+x^{\rhd p},$$
and $(\g,\rhd,(-)^{\llb p\rrb})$ is actually a restricted pre-Lie algebra in the sense of \cite{Do}. 
More specially, if $(-)^{[p]}=0$ such that $x^{\llb p\rrb}=x^{\rhd p}$,
then $(\g,\rhd,(-)^{\rhd p})$ is a restricted pre-Lie algebra originally introduced in \cite{Dz}.

\end{remark}

Next we mention two useful properties of restricted post-Lie algebras.
\begin{prop}[{\cite[Prop.~3.2]{EG}}]\label{prop:sub-ad-restrict}
Given a restricted post-Lie algebra $(\g,[-,-],(-)^{[p]},\rhd)$, it gives rise to a {\bf subjacent restricted Lie algebra} $(\g,\llb-,-\rrb,(-)^{\llb p\rrb})$, where $\llb -,-\rrb$ is the subjacent Lie bracket of the post-Lie algebra $(\g,[-,-],\rhd)$ in Eq.~\eqref{eq:post-L-sub} and $(-)^{\llb p\rrb}$ is defined in Eq.~\eqref{eq:rpl-2}.
\end{prop}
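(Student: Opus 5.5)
The plan is to realize $(\g,\llb-,-\rrb,(-)^{\llb p\rrb})$ inside a restricted Lie algebra that we already know, namely the commutator Lie algebra of an associative algebra with its $p$-th power map (Example~\ref{ex:ass-rla}). Then the three axioms \eqref{eq:rl-1}--\eqref{eq:rl-3} for the subjacent structure will be inherited rather than checked by hand. That $\llb-,-\rrb$ is a Lie bracket is classical for post-Lie algebras, so only the $p$-map needs work.

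The natural ambient algebra is the subjacent Hopf algebra $\U(\g)_\rhd=(\U(\g),*_\rhd)$. By the Guin-Oudom isomorphism $\phi:\U(\g_\rhd)\to\U(\g)_\rhd$, the element $\phi(x)=x$ satisfies $x*_\rhd y-y*_\rhd x=\llb x,y\rrb$ for $x,y\in\g$. Indeed, by \eqref{post-rbb-1} we have $x*_\rhd y=xy+x\rhd y$, and the commutator in $\U(\g)$ gives $[x,y]$. Thus $\g\subseteq\U(\g)_\rhd^-$ is a Lie subalgebra isomorphic to $\g_\rhd$.

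The difficulty is that the $*_\rhd$-$p$-th power of $x\in\g$ need not lie in $\g$. This is exactly where \eqref{eq:p-map-uea} enters: $x^{*_\rhd p}=x^{\llb p\rrb}+(x^p-x^{[p]})$, where $c_x\coloneqq x^p-x^{[p]}$ is central for the usual product. I would pass to a quotient that kills the $c_x$. Let $I$ be the ideal of $(\U(\g),\cdot)$ generated by all $c_x$, so that $\U(\g)/I=\uu(\g)$.

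First I would show that $I$ is also a two-sided ideal for $*_\rhd$. The elements $c_x$ are primitive, so $c_x*_\rhd y=c_x y+c_x\rhd y$. For the right side, $y*_\rhd c_x=y_1(y_2\rhd c_x)$, and we need $y\rhd c_x\in I$. For $y\in\g$ this is \eqref{eq:rpl-3} combined with Lemma~\ref{2.7}: $y\rhd x^p$ and $y\rhd x^{[p]}$ are given by the same iterated bracket, so $y\rhd c_x=0$. Then \eqref{Post-1} and \eqref{Post-2} extend this to all $y\in\U(\g)$. For the left side, $c_x\rhd z=x^{*_\rhd p}\rhd z-x^{\llb p\rrb}\rhd z$. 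Using \eqref{Post-2} we get $x^{*_\rhd p}\rhd z=\alpha_{\rhd,x}^p z$, and this vanishes against $x^{\llb p\rrb}\rhd z$ by \eqref{eq:rpl-4}, first on $\g$ and then on $\U(\g)$ via \eqref{Post-1}. Hence $I$ is a $*_\rhd$-ideal, and $A\coloneqq\U(\g)/I$ carries the associative product $*_\rhd$.

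Second, the composite $\g\to A$ remains injective, by the PBW theorem for $\uu(\g)$ (Jacobson). In $A$ we have $x^{*_\rhd p}=x^{\llb p\rrb}$, so the image of $\g$ is closed under the $p$-th power of $(A,*_\rhd)$, and this $p$-th power restricts to $(-)^{\llb p\rrb}$. By Example~\ref{ex:ass-rla}, $(A^-,(-)^{*_\rhd p})$ is restricted, and restricted subalgebras inherit \eqref{eq:rl-1}--\eqref{eq:rl-3}. Therefore $(\g,\llb-,-\rrb,(-)^{\llb p\rrb})$ is a restricted Lie algebra.

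The main obstacle is the step showing that $I$ is closed under $*_\rhd$ on both sides, in particular extending $y\rhd c_x=0$ and $c_x\rhd z=0$ from generators to all of $\U(\g)$ using \eqref{Post-1} and \eqref{Post-2}. As a fallback, one can check \eqref{eq:rl-2} directly via $\mathrm{ad}^{\llb\,\rrb}_x=\alpha_{\rhd,x}+\mathrm{ad}_x$ and \eqref{eq:rl-3} via formal polynomial identities, though that route is messier.
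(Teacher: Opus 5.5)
Your proof is correct, but it takes a different route from the paper's.

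\textbf{The paper's route.} The paper never passes to a quotient. It works inside $(\U(\g)_\rhd^-,(-)^{*_\rhd p})$ itself and applies the criterion of \cite[Prop.~2.1]{SF}: if $L$ is a Lie subalgebra of a restricted Lie algebra $G$ and $f\colon L\to C_G(L)$ is $p$-semilinear with $x^{[p]}-f(x)\in L$, then $x\mapsto x^{[p]}-f(x)$ is a $p$-map on $L$. The paper takes $f(x)=x^{*_\rhd p}-x^{\llb p\rrb}=x^p-x^{[p]}=c_x$. For this it needs only two facts:
\begin{enumerate}[(a)]
\item $c_x$ commutes with $\g$ under $*_\rhd$. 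This reduces to the degree-one identities $y\rhd c_x=0$ and $c_x\rhd y=0$ for $y\in\g$, which are exactly your uses of Lemma~\ref{2.7} with \eqref{eq:rpl-3}, and of \eqref{Post-2} with \eqref{eq:rpl-4}.
\item $x\mapsto c_x$ is $p$-semilinear, which follows from Jacobson's formula in $\U(\g)$ together with \eqref{eq:rl-3} in $\g$.
\end{enumerate}

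\textbf{Your route.} You need more:
\begin{enumerate}[(a)]
\item $I$ must be a two-sided $*_\rhd$-ideal of all of $\U(\g)$, which is essentially Proposition~\ref{prop:res-env-rpla}.
\item You need the restricted PBW theorem to get injectivity of $\g\to\uu(\g)$.
\end{enumerate}
In exchange, you never have to check $p$-semilinearity of $f$ separately. You also get a stronger by-product: $(\uu(\g),*_\rhd)$ is an associative algebra containing $(\g,\llb-,-\rrb,(-)^{\llb p\rrb})$ as a restricted subalgebra. This is a restricted shadow of the Guin--Oudom isomorphism.

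\textbf{Completing the ideal step.} You must handle general elements $c_x b$ of $I$, not just the generators $c_x$. The clean way is to prove two identities for all $z\in\U(\g)$.
\begin{enumerate}[(1)]
\item $c_x\rhd z=0$. Both $\alpha_{\rhd,x^{\llb p\rrb}}$ and $\alpha_{\rhd,x}^p$ are derivations of $(\U(\g),\cdot)$ that agree on $\g$.
\item $z\rhd c_x=\vep(z)c_x$. Argue by induction on degree via
\[
(yz')\rhd c_x=y\rhd(z'\rhd c_x)-(y\rhd z')\rhd c_x ,\quad y\in\g,
\]
using that $\deg(y\rhd z')\le\deg z'$ and that $\vep(y\rhd z')=0$ because $\rhd$ is a coalgebra map.
\end{enumerate}
Together these give $c_x*_\rhd z=c_xz=z*_\rhd c_x$. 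So each $c_x$ is central for both products, and $I=\sum_x c_x*_\rhd\U(\g)$ is a two-sided $*_\rhd$-ideal by associativity.
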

\begin{proof}
The subjacent Lie algebra $\g_\rhd=(\g,\llb-,-\rrb)$ is a Lie subalgebra of the restricted commutator Lie algebra $(U(\g)_\rhd^-,(-)^{*_\rhd p})$, and Eq.~\eqref{eq:p-map-uea} implies that
$f:x\mapsto x^{*_\rhd p}-x^{\llb p\rrb}$ is a $p$-semilinear map from $\g_\rhd$ to the centralizer $C_{U(\g)_\rhd^-}(\g_\rhd)$, so $(-)^{\llb p\rrb}$ is a $p$-map on $\g_\rhd$ by \cite[Prop.~2.1]{SF}.
\end{proof}

\begin{prop}[{\cite[Prop.~3.3]{EG}}]\label{prop:res-env-rpla}
Given a restricted post-Lie algebra $(\g,[-,-],(-)^{[p]},\rhd)$, the post-Hopf product $\rhd$ on $\U(\g)$ can descend to the restricted enveloping algebra $\uu(\g)$, so that $\uu(\g)$ is a post-Hopf quotient algebra of $\U(\g)$.
\end{prop}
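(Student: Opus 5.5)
To show that the post-Hopf product $\rhd$ on $\U(\g)$ descends to $\uu(\g)$, we let $I$ be the two-sided ideal of $\U(\g)$ generated by the central primitive elements $c_x\coloneqq x^p-x^{[p]}$, $x\in\g$. Since each $c_x$ is central, $I=\sum_x c_x\U(\g)$. Since each $c_x$ is primitive, $I$ is a Hopf ideal, so $\uu(\g)=\U(\g)/I$ is a quotient Hopf algebra. The task is then to prove two inclusions: $\U(\g)\rhd I\subseteq I$ and $I\rhd \U(\g)\subseteq I$. Once these hold, $\rhd$ induces a well-defined map on $\uu(\g)\otimes\uu(\g)$. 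The identities \eqref{Post-1}, \eqref{Post-2} and the existence of the convolution inverse $\beta_\rhd$ pass to the quotient automatically, because they are identities among induced maps and $\Delta_\shuffle,S$ descend.

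For the first inclusion, we use \eqref{Post-1}: for $a\in\U(\g)$ and $u\in\U(\g)$ we have $a\rhd(c_xu)=(a_1\rhd c_x)(a_2\rhd u)$. It therefore suffices to show $a\rhd c_x\in I$, and in fact we expect $a\rhd c_x=\vep(a)c_x$. We argue by induction on the length of monomials $a=y_1\cdots y_k$, using \eqref{Post-2} in the form $(y\cdot b)\rhd z=y\rhd(b\rhd z)-(y\rhd b)\rhd z$. This reduces everything to the case $a=y\in\g$. In that case Lemma~\ref{2.7} gives $y\rhd x^p=[x,\dots,[x,y\rhd x]\cdots]$, and \eqref{eq:rpl-3} gives the same value for $y\rhd x^{[p]}$. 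Hence $y\rhd c_x=0$. The subtraction term $(y\rhd b)\rhd c_x$ has $y\rhd b$ of the same length as $b$, so the induction closes, and we get $a\rhd c_x=\vep(a)c_x$.

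For the second inclusion, \eqref{Post-2} together with $\Delta(c_x)=c_x\otimes1+1\otimes c_x$ gives
\[(c_xu)\rhd z=c_x\rhd(u\rhd z)-(c_x\rhd u)\rhd z .\]
By the previous step, $c_x\rhd u\in\U(\g)\rhd I$ only if $u\in I$, which need not hold. So we instead show directly that $c_x\rhd w\in I$ for all $w$, by showing that $\alpha_{\rhd,c_x}$ vanishes modulo $I$. Since $c_x$ is primitive, \eqref{Post-1} makes $\alpha_{\rhd,c_x}$ a derivation of $\U(\g)$. It therefore suffices to check it on generators $y\in\g$, where
\[c_x\rhd y=x^p\rhd y-x^{[p]}\rhd y .\]
This is the main obstacle. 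We must relate $x^p\rhd y$ (with $x^p$ taken in the associative product of $\U(\g)$) to iterated actions of $x$. The tool is \eqref{Post-2}, which reads $x\rhd(x\rhd y)=(x\cdot x+x\rhd x)\rhd y$; iterating, the map $a\mapsto \alpha_{\rhd,a}$ is an algebra morphism from $\U(\g)_\rhd$. This gives $\alpha_{\rhd,x}^p=\alpha_{\rhd,x^{*_\rhd p}}$. Applying \eqref{eq:p-map-uea}, we get $x^{*_\rhd p}=x^{\llb p\rrb}+c_x$, so $\alpha_{\rhd,c_x}=\alpha_{\rhd,x}^p-\alpha_{\rhd,x^{\llb p\rrb}}$. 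By \eqref{eq:rpl-4} this vanishes on $\g$, and as a difference of derivations it then vanishes on all of $\U(\g)$. Therefore $c_x\rhd w=0$, and in general $(c_xu)\rhd z=-(c_x\rhd u)\rhd z=0\in I$, which completes the second inclusion.

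Finally, we confirm that the induced structure is a post-Hopf algebra. The identities hold on representatives. The convolution inverse $\beta_\rhd$ descends as well, since it is built from $\alpha_\rhd$ and $S$, both of which preserve $I$; for $S$ this is because $S(c_x)=-c_x$.
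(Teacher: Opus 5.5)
Your proof is correct. Note that the paper gives no proof of this proposition: it simply quotes Ehret--Gilliers \cite[Prop.~3.3]{EG}. So there is nothing in the paper to compare your route against. What you have written is a valid self-contained proof.

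It rests on the same two generator-level identities the paper itself uses in the proof of Theorem~\ref{thm:restrictability}, namely
\begin{itemize}
\item $y\rhd(x^p-x^{[p]})=0$, from Lemma~\ref{2.7} and \eqref{eq:rpl-3};
\item $(x^{*_\rhd p}-x^{\llb p\rrb})\rhd y=\alpha_{\rhd,x}^p(y)-x^{\llb p\rrb}\rhd y=0$, from \eqref{Post-2}, \eqref{eq:p-map-uea} and \eqref{eq:rpl-4}. This is the reformulation at the end of Definition~\ref{defn:rpl}.
\end{itemize}
Your own contribution is the propagation from $\g$ to all of $\U(\g)$, which is sound. The induction through $(yb)\rhd z=y\rhd(b\rhd z)-(y\rhd b)\rhd z$ gives $a\rhd c_x=\vep(a)c_x$. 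The observation that $\alpha_{\rhd,c_x}$ is a derivation of $\U(\g)$, because $c_x$ is primitive, gives $c_x\rhd\U(\g)=0$ and hence $I\rhd\U(\g)=0$. That is stronger than needed.

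One small inaccuracy at the end. The convolution inverse $\beta_\rhd$ is not built from $\alpha_\rhd$ and $S$. By uniqueness of convolution inverses it equals $\alpha_\rhd\circ S_\rhd$, where $S_\rhd$ is the antipode of $\U(\g)_\rhd$. It still descends, for the following reasons:
\begin{itemize}
\item Your two inclusions make $I$ a $*_\rhd$-ideal, and $c_x*_\rhd u=c_xu$.
\item Since $c_x$ is primitive, $S_\rhd(c_x)=-c_x$, so $S_\rhd(I)\subseteq I$.
\item Therefore $\beta_{\rhd,a}(I)\subseteq I$ for all $a$, and $\beta_{\rhd,c}=\alpha_{\rhd,S_\rhd(c)}=0$ for $c\in I$.
\end{itemize}
A quicker fix: $\uu(\g)$ is a connected coalgebra, so any induced map $\bar\alpha$ with $\bar\alpha_1=\id$ is automatically convolution invertible.
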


\subsection{Universal $p$-envelopes of post-Lie algeras}

\begin{defn}
	Let $(\g,(-)^{[p]},\rhd)$ be a restricted post-Lie algebra over $\bk$.
A post-Lie subalgebra $\lieh$ of $\g$ is a $p$-subalgebra if $x^{[p]}\in \lieh,\ \forall x\in \lieh$. For a subset $E\subset \g$, we denote by $E_p$ the intersection of all $p$-subalgebras containing $E$, which is a $p$-subalgebra of $\g$ and is referred to as the $p$-subalgebra generated by $E$ in $\g$.
\end{defn}	

\begin{defn}
Let $(\g,\rhd)$ be a post-Lie algebra.
A tuple $(\liel, (-)^{[p]'}, \rhd', i)$ consisting of a restricted post-Lie algebra $(\liel, (-)^{[p]'}, \rhd')$ and a post-Lie algebra homomorphism $i:\g\to \liel$ is called a {\bf $p$-envelope} of $(\g,\rhd)$ if $i$ is injective and $i(\g)_p=\liel.$
	
A $p$-envelope $(\liel, (-)^{[p]'}, \rhd', i)$ of $(\g,\rhd)$ is called {\bf universal} if it satisfies the following universal property: for any restricted post-Lie algebra $\liec$ and post-Lie algebra homomorphism $f:\g\to \liec$, there exists a unique restricted post-Lie algebra homomorphism $g: \liel\to \liec$ such that $g\circ i=f$.
\end{defn}

By Example~\ref{ex:uea-rla}, any Lie algebra $\g$ in characteristic $p$ naturally gives rise to a restricted Lie algebra $\g^{(p)}$ inside $\U(\g)$ as a universal $p$-envelope of $\g$. Next we show that such a construction can be strengthen for post-Lie algebras.
\begin{theorem}\label{thm:pla-rpla}
Given a post-Lie algebra $(\g,[-,-],\rhd)$ in characteristic $p$, let $\g^{(p)}$ be the primitive Lie subalgebra of $\U(\g)^-$
as in Example~\ref{ex:uea-rla}. Then $(\g^{(p)},[-,-],(-)^p,\rhd,i)$ is a universal $p$-envelope of $(\g,[-,-],\rhd)$, where $i:\g\to \g^{(p)}$ is the natural embedding.
\end{theorem}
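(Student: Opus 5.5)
We need to show three things: $\g^{(p)}$ is a post-Lie subalgebra of $\U(\g)$ under $\rhd$; with the $p$-map $(-)^p$ it is a restricted post-Lie algebra in the sense of Definition~\ref{defn:rpl}; and it has the universal property. The plan proceeds in that order.

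\emph{Closure.} First show $\g^{(p)}\rhd\g^{(p)}\subseteq\g^{(p)}$. By the post-Hopf axiom \eqref{Post-1}, for a primitive $x$ the map $\alpha_{\rhd,x}$ is a derivation of $(\U(\g),\cdot)$. If $x,y$ are primitive, \eqref{Post-1} gives
\[
\Delta(x\rhd y)=(x_1\rhd y_1)\otimes(x_2\rhd y_2),
\]
which reduces to $x\rhd y\otimes 1+1\otimes x\rhd y$ because $x\rhd 1=\vep(x)1=0$ and $1\rhd y=y$. So $x\rhd y$ is primitive, and $\g^{(p)}=P(\U(\g))$ by Example~\ref{ex:uea-rla}. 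The axioms \eqref{Post-L-1} and \eqref{Post-L-2} on primitives follow from \eqref{Post-1} and \eqref{Post-2}: expanding $x_1\cdot(x_2\rhd y)=xy+x\rhd y$ for primitive $x$ gives \eqref{Post-L-2}. The $p$-map $(-)^p$ makes $\g^{(p)}$ a restricted Lie algebra, again by Example~\ref{ex:uea-rla}.

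\emph{Restricted axioms.} Eq.~\eqref{eq:rpl-3} with $x^{[p]}=x^p$ is exactly Lemma~\ref{2.7}. For \eqref{eq:rpl-4}, the lemma giving \eqref{eq:glproduct} shows $x^{\llb p\rrb}=x^{*_\rhd p}$ inside $\U(\g)$, since here $x^{[p]}=x^p$. Then \eqref{Post-2}, rewritten as $x\rhd(y\rhd z)=(x*_\rhd y)\rhd z$, makes $\alpha_\rhd$ a representation of $(\U(\g),*_\rhd)$. Iterating gives $x^{\llb p\rrb}\rhd y=x^{*_\rhd p}\rhd y=\alpha_{\rhd,x}^p(y)$. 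The constraint \eqref{eq:rpl-2} then holds by definition.

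\emph{Envelope and universality.} The map $i$ is injective by PBW, and $i(\g)_p=\g^{(p)}$, since the $p$-closure of $\g$ contains every $x^{p^k}$ and these span $\g^{(p)}$. Now let $f:\g\to\liec$ be a post-Lie homomorphism into a restricted post-Lie algebra $\liec$. Extend it to $F:\U(\g)\to\U(\liec)$ and compose with the projection to $\uu(\liec)$; call the result $\bar F$.

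\begin{itemize}
\item $F$ is a homomorphism of Hopf algebras. It also intertwines $\rhd$, because the Guin–Oudom extension is determined by \eqref{Post-1} and \eqref{Post-2} from its values on generators. By Proposition~\ref{prop:res-env-rpla}, $\rhd$ descends to $\uu(\liec)$, and $\bar F$ intertwines $\rhd$ as well.
\item $\bar F$ sends primitives to primitives. Since $P(\uu(\liec))=\liec$, restricting $\bar F$ gives $g:\g^{(p)}\to\liec$ with $g(x^p)=g(x)^{[p]}$.
\item $g$ preserves $[-,-]$, the $p$-map and $\rhd$, so it is a restricted post-Lie homomorphism.
\item $g$ is unique, because $\g^{(p)}$ is generated by $\g$ under the $p$-map, so $g$ is determined by $f$.
\end{itemize}

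The main obstacle I expect is checking that $F$ intertwines the extended products $\rhd$. This needs an induction on word length using \eqref{Post-1} and \eqref{Post-2}, together with the recursive Guin–Oudom definition $(xa)\rhd b=x\rhd(a\rhd b)-(x\rhd a)\rhd b$. A second delicate point is that the extended $\rhd$ on $\uu(\liec)$ restricts to the original $\rhd$ on $\liec$; this is where Proposition~\ref{prop:res-env-rpla} and the trivially restricted condition \eqref{eq:rpl-2} are used.
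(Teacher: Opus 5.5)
Your proposal is correct and follows essentially the same route as the paper: you get post-Lie closure of the primitives from the post-Hopf structure, \eqref{eq:rpl-3} from Lemma~\ref{2.7}, \eqref{eq:rpl-4} from $x^{\llb p\rrb}=x^{*_\rhd p}$ together with \eqref{Post-2}, and universality by mapping $\U(\g)\to\uu(\liec)$ and restricting to $\g^{(p)}$. The differences are harmless: you show the image lands in $\liec$ via $P(\uu(\liec))=\liec$, where the paper computes $\bar f(x^{p^k})=f(x)^{[p]'^k}$ directly, and the primitivity of $x\rhd y$ really comes from $\rhd$ being a coalgebra map, not from \eqref{Post-1} itself.
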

\begin{proof}
First we show that $(\g^{(p)},[-,-],\rhd)$ is a post-Lie algebra.
Since $\g^{(p)}$ is the primitive Lie subalgebra of $\U(\g)^-$,
the coalgebra homomorphism $\rhd:\U(\g)\otimes \U(\g)\to \U(\g)$ has a restriction on $\g^{(p)}$
and Eq.~\eqref{Post-L-1} holds for $\g^{(p)}$ by Eq.~\eqref{Post-1}.
On the other hand, for any $x,y,z\in\g^{(p)}$,
we have
$$(x\rhd y- y\rhd x + [x,y])\rhd z=(xy + x\rhd y)\rhd z - (yx + y\rhd x)\rhd z \stackrel{\eqref{Post-2}}{=} x\rhd(y\rhd z)-y\rhd(x\rhd z).$$
Namely, Eq.~\eqref{Post-L-2} also holds.

Next we have known that $(\g^{(p)},[-,-],(-)^p)$ is a restricted Lie algebra in Example~\ref{ex:uea-rla}. Since
$$\llb x,y\rrb \stackrel{\eqref{eq:post-L-sub}}{=} x\rhd y- y\rhd x + [x,y] \stackrel{\eqref{post-rbb-1}}{=} x*_\rhd y - y*_\rhd x,\quad x,y \in\g^{(p)},$$
$(\g^{(p)},\llb-,-\rrb)$ is the primitive Lie subalgebra of $\U(\g)^-_\rhd$, which is stable under the power map $(-)^{*_\rhd p}$.
According to Example~\ref{ex:ass-rla},
$(\g^{(p)},\llb-,-\rrb,(-)^{*_\rhd p})$ is a restricted Lie subalgebra of the restricted commutator Lie algebra $(\U(\g)_\rhd^-,(-)^{*_\rhd p})$.

Since $x^{[p]}=x^p$ for all $x\in\g^{(p)}\subset U(\g),$  condition~\eqref{eq:rpl-3} holds for $\g^{(p)}$ according to Lemma \ref{2.7}, and $ x^{\llb p \rrb}= x^{*_\rhd p} $
by the definition of $(-)^{\llb p \rrb}$ and Eqs. \eqref{eq:glproduct}.
Moreover, condition~\eqref{eq:rpl-4} is due to Eqs.~\eqref{Post-2} and \eqref{post-rbb-1}, as $(\U(\g),\rhd)$ is a post-Hopf algebra.
Hence, $(\g^{(p)},[-,-],(-)^p,\rhd)$ is a restricted post-Lie algebra. Since $\g^{(p)}$ is the $p$-subalgebra of $\U(\g)^-$ generated by $\g$ and with the natural embedding $i:\g\to \g^{(p)}$, it is clear that $(\g^{(p)},(-)^p,\rhd,i)$ is a $p$-envelope of $(\g,\rhd)$.
	
Now, for each restricted post-Lie algebra $(\liel,(-)^{[p]'},\rhd')$ and a post-Lie algebra homomorphism $f:\g \to \liel$, we can apply Proposition~\ref{prop:res-env-rpla} to $(\liel,(-)^{[p]'},\rhd')$ and use the universal property of $\U(\g)$ to obtain a post-Hopf algebra homomorphism $\bar{f}: \U(\g)\to \uu(\liel)$ such that $\bar f\circ i=f$.
Now for any $x\in\g$ and $k\geq0$, we have
$$\bar{f}(x^{p^k})=\bar{f}(x)^{p^k}=f(x)^{p^k}=f(x)^{{[p]'}^k}\in\liel,$$
so $\g^{(p)}\subseteq \bar{f}^{-1}(\liel)$ and $\bar{f}(X^p)=\bar{f}(X)^{[p]'}$ for any $X\in \g^{(p)}$. Then $g\coloneqq \bar{f}|_{\g^{(p)}}: \g^{(p)} \to \liel$ is a restricted post-Lie algebra homomorphism such that $g\circ i=f$. Since $(\g^{(p)},(-)^p,i)$ is the $p$-subalgebra generated by $\g$, such a homomorphism $g$ is unique and $(\g^{(p)},(-)^p,\rhd,i)$ is a universal $p$-envelope of $(\g,\rhd)$.

\end{proof}

Given a magma algebra $(V,\rhd)$, let $T(V)$ be the tensor algebra over $V$ and let ${\rm Lie}(V)$ be the Lie subalgebra of $T(V)^-$ generated by $V$.
According to Foissy's construction in \cite{Foissy}, there exists an extended product $\rhd$ on the tensor algebra $T(V)$ such that $(T(V),\cdot\,,\Delta_\shuffle,S,\rhd)$
is the universal enveloping algebra of the free post-Lie algebra $({\rm Lie}(V),\rhd)$ over the magma algebra $(V,\rhd)$.
Hence, applying Theorem~\ref{thm:pla-rpla} to the post-Lie algebra $({\rm Lie}(V),\rhd)$, we have
\begin{prop}
Given a magma algebra $(V,\rhd)$ over a field $\bk$ of characteristic $p$,
let ${\rm Lie}(V)^{(p)}$ be the subspace of $T(V)$ spanned by all $X^{p^k}$ for $X\in {\rm Lie}(V)$ and $k\geq0$. Then the quadruple
$$({\rm Lie}(V)^{(p)},[-,-],(-)^p,\rhd)$$
is the free restricted post-Lie algebra over $(V,\rhd)$.
\end{prop}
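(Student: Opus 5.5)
The plan is to obtain the statement by composing two universal properties: the freeness of $({\rm Lie}(V),\rhd)$ as a post-Lie algebra over the magma algebra $(V,\rhd)$, and Theorem~\ref{thm:pla-rpla} applied to ${\rm Lie}(V)$. First, by Foissy's construction, $T(V)$ with the extended product $\rhd$ is the universal enveloping algebra $\U({\rm Lie}(V))$, carrying its post-Hopf structure. The primitive Lie subalgebra of $\U({\rm Lie}(V))^-$ described in Example~\ref{ex:uea-rla} is then exactly ${\rm Lie}(V)^{(p)}=\mathrm{span}\{X^{p^k}\mid X\in{\rm Lie}(V),\,k\ge0\}\subseteq T(V)$. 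Theorem~\ref{thm:pla-rpla} shows that $({\rm Lie}(V)^{(p)},[-,-],(-)^p,\rhd,i)$ is a restricted post-Lie algebra and a universal $p$-envelope of $({\rm Lie}(V),[-,-],\rhd)$. Before using this, I will check that the $\rhd$ on ${\rm Lie}(V)^{(p)}$ is the restriction of Foissy's extension, which holds because that extension is the Guin–Oudom one.

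Next comes the freeness. Let $(\liec,[-,-]',(-)^{[p]'},\rhd')$ be a restricted post-Lie algebra and $f:V\to\liec$ a magma homomorphism, where $\liec$ is regarded as a magma via $\rhd'$. The freeness of ${\rm Lie}(V)$ gives a unique post-Lie homomorphism $\tilde f:{\rm Lie}(V)\to\liec$ extending $f$, since $\liec$ with its $p$-map forgotten is a post-Lie algebra. The universality in Theorem~\ref{thm:pla-rpla} then gives a unique restricted post-Lie homomorphism $g:{\rm Lie}(V)^{(p)}\to\liec$ with $g\circ i=\tilde f$, and hence $g|_V=f$.

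Uniqueness runs the same argument backwards. Suppose $g'$ is another restricted post-Lie homomorphism with $g'|_V=f$. Its restriction to ${\rm Lie}(V)$ is a post-Lie homomorphism extending $f$, so it equals $\tilde f$ by freeness. Since ${\rm Lie}(V)^{(p)}$ is generated as a $p$-subalgebra by ${\rm Lie}(V)$, and hence by $V$, we get $g'=g$.

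The only delicate point is the identification of Foissy's $T(V)$ with $\U({\rm Lie}(V))$ as post-Hopf algebras in characteristic $p$, including the fact that ${\rm Lie}(V)$ is free as a Lie algebra there. I take this as given by the cited construction, which the paper itself invokes. The rest is formal.
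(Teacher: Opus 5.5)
Your proposal is correct and follows the paper's route: identify Foissy's $T(V)$ with $\U({\rm Lie}(V))$ as a post-Hopf algebra, then apply Theorem~\ref{thm:pla-rpla} to the free post-Lie algebra $({\rm Lie}(V),\rhd)$. The only difference is that you spell out how the two universal properties compose (first the free post-Lie algebra over $(V,\rhd)$, then the universal $p$-envelope), which the paper leaves implicit.
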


\delete{
\begin{lemma}
	If $(\lieg, [p], \rhd)$ is a restricted post-Lie algebra, then the center $(C(\lieg), [p], \rhd)$ is a restricted post-Lie subalgebra of $\lieg$ and $\lieg\rhd C(\lieg) \subseteq C(\lieg).$ Moreover, if $ C(\lieg) \rhd\lieg\subseteq C(\lieg),$ then the adjoint Lie algebra $(\on{ad} \lieg, [p]_1, \rhd_1)$ possesses a restricted post-Lie algebra structure where
	\[ (\on{ad}x)^{[p]_1}\coloneqq\on{ad}(x^{[p]})=\underbrace{\on{ad}x \circ \cdots \circ \on{ad}x}_{p},\text{ and } \on{ad}x \rhd_1 \on{ad}y\coloneqq \on{ad} (x\rhd y),  \]
	for all $x,y\in \lieg.$
\end{lemma}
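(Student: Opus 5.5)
The plan is to verify the two halves of the statement separately: first the facts about the center $C(\lieg)$, then the induced structure on $\on{ad}\lieg$.

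\emph{The center.} Take $c\in C(\lieg)$ and $x,y\in\lieg$.
\begin{itemize}
\item Applying \eqref{Post-L-1} to $[c,y]=0$ gives $0=x\rhd[c,y]=[x\rhd c,y]+[c,x\rhd y]=[x\rhd c,y]$. Hence $x\rhd c\in C(\lieg)$, i.e.\ $\lieg\rhd C(\lieg)\subseteq C(\lieg)$.
\item This also shows that $C(\lieg)$ is closed under $\rhd$. It is an abelian Lie subalgebra.
\item By \eqref{eq:rl-2}, $[c^{[p]},y]=\ad_c^p(y)=0$, so $c^{[p]}\in C(\lieg)$.
\end{itemize}
Therefore $(C(\lieg),(-)^{[p]},\rhd)$ is a post-Lie subalgebra closed under the $p$-map. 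Conditions \eqref{eq:rpl-3} and \eqref{eq:rpl-4} are inherited by restriction. For \eqref{eq:rpl-4} we need that $c^{\llb p\rrb}$ computed inside $C(\lieg)$ agrees with its value in $\lieg$. It does: all brackets $[c^{\rhd\alpha}]$ with $r\ge2$ vanish in both, since every $c^{\rhd k}$ lies in the center. So $C(\lieg)$ is a restricted post-Lie subalgebra.

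\emph{The adjoint algebra.} Identify $\on{ad}\lieg\cong\lieg/C(\lieg)$, with the bracket $[\on{ad}x,\on{ad}y]=\on{ad}[x,y]$. The operations $\rhd_1$ and $(-)^{[p]_1}$ are well defined because $C(\lieg)$ is an ideal for all the structure:
\begin{itemize}
\item $\lieg\rhd C(\lieg)\subseteq C(\lieg)$ was shown above;
\item $C(\lieg)\rhd\lieg\subseteq C(\lieg)$ is the hypothesis;
\item the $p$-map descends modulo the center, because $(x+c)^{[p]}-x^{[p]}=c^{[p]}+\sum_i s_i(x,c)$ by \eqref{eq:rl-3}. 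Each $s_i(x,c)$ is a bracket involving $c$, hence zero, and $c^{[p]}\in C(\lieg)$.
\end{itemize}
The identity $\on{ad}(x^{[p]})=(\on{ad}x)^p$ is exactly \eqref{eq:rl-2}.

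It follows that $\on{ad}:\lieg\to\on{ad}\lieg$ is a surjection compatible with the bracket, $\rhd$ and the $p$-map. Every defining identity of a restricted Lie algebra and of a post-Lie algebra, namely \eqref{eq:rl-1}–\eqref{eq:rl-3}, \eqref{Post-L-1}, \eqref{Post-L-2}, \eqref{eq:rpl-3} and \eqref{eq:rpl-4}, holds in $\lieg$. Each is built from these operations, so each transfers to the quotient by applying $\on{ad}$.

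The one point that needs checking is \eqref{eq:rpl-4}. There $(-)^{\llb p\rrb_1}$ on the quotient is defined by the formula \eqref{eq:rpl-2} using $[p]_1$ and $\rhd_1$. This is the main obstacle. The formula is a polynomial expression in $x^{[p]}$, brackets and $\rhd$, so $\on{ad}$ maps it termwise: $\on{ad}(x^{\llb p\rrb})=(\on{ad}x)^{\llb p\rrb_1}$. Then
\[
(\on{ad}x)^{\llb p\rrb_1}\rhd_1\on{ad}y=\on{ad}(x^{\llb p\rrb}\rhd y)=\on{ad}(x\rhd(\cdots(x\rhd y)\cdots)),
\]
which is the $p$-fold $\rhd_1$-iterate, as required.
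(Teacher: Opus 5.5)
Your proof is correct and follows essentially the same route as the paper's proof. The common steps are: closure of the center under $\rhd$ via the derivation identity \eqref{Post-L-1}, well-definedness of $\rhd_1$ from the two inclusions $\lieg\rhd C(\lieg)\subseteq C(\lieg)$ and $C(\lieg)\rhd\lieg\subseteq C(\lieg)$, and transfer of the post-Lie and restricted identities through $\on{ad}$, including the termwise identity $\on{ad}(x^{\llb p\rrb})=(\on{ad}x)^{\llb p\rrb_1}$ for \eqref{eq:rpl-4}; treating $\on{ad}\colon\lieg\to\lieg/C(\lieg)$ as a surjection compatible with all the operations just condenses the paper's identity-by-identity computation, and you also spell out details the paper calls clear, namely closure of $C(\lieg)$ under the $p$-map via \eqref{eq:rl-2} and descent of the $p$-map modulo the center.
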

\begin{proof}
	It is clear that $(C(\lieg), [p])$ is a restricted Lie subalgebra of $(\lieg, [p]).$
	
	For every $x\in C(\lieg), y,z\in \lieg, $ since $[y\rhd x,z]=y\rhd [x,z]- [x, y\rhd z]=0,$ we have that $y\rhd x\in C(\lieg)$ and hence the first statement holds.
	
	Since $(\lieg,[p])$ is restricted, one can check that $(\on{ad} \lieg, [p]_1)$ is also a restricted Lie algebra.
	
	The definition of $ \rhd_1 $ on $\ad \lieg$ is well-defined, as established by $\lieg\rhd C(\lieg) \subseteq C(\lieg)$ and $ C(\lieg) \rhd\lieg\subseteq C(\lieg)$. For each $x,y,z\in\lieg,$ since $\on{ad} [x,y]=[\on{ad} x,\on{ad} y],$ we have that
	\begin{eqnarray*}
		\on{ad} x\rhd_1 [\on{ad} y,\on{ad} z]&=&\on{ad} (x\rhd [y,z])=\on{ad} ([x\rhd y,z]+[y,x\rhd z])\\
		&=&[\on{ad} x\rhd_1 \on{ad} y,\on{ad} z]+[\on{ad} y,\on{ad} x\rhd_1\on{ad} z],\\
		{[\on{ad} x,\on{ad} y]\rhd_1\on{ad} z}&=& \on{ad} ([x,y]\rhd z)\\
		&=& \on{ad} \left( x\rhd(y\rhd z)-(x\rhd y)\rhd z- y\rhd(x\rhd z) + (y\rhd x)\rhd z \right)\\
		& =& \on{ad} x\rhd(\on{ad} y\rhd \on{ad} z)-(\on{ad} x\rhd \on{ad} y)\rhd \on{ad} z\\
		&&- \on{ad} y\rhd(\on{ad} x\rhd \on{ad} z) + (\on{ad} y\rhd \on{ad} x)\rhd \on{ad} z.
	\end{eqnarray*}
	Therefore, $(\on{ad} \lieg ,\rhd_1)$ is a post-Lie algebra.
	
	Moreover,
	\begin{eqnarray*}
		\on{ad} x\rhd_1 \on{ad} y^{[p]_1}&=&\on{ad} x\rhd_1 (ad y)^p = \on{ad} x\rhd_1 (ad y^{[p]}) =\on{ad} (x\rhd y^{[p]})  \\
		&=&  \on{ad}  \underbrace{[y ,[ \dots, [y}_{p-1},x\rhd y]\dots ] =  \underbrace{[\on{ad} y ,[ \dots, [\on{ad} y}_{p-1},\on{ad} x\rhd_1 \on{ad} y]\dots ]\\
		(\on{ad} x)^{\llb p\rrb}\rhd_1 \on{ad} y &=& \left((\on{ad} x)^{[p]} + \sum_{r=1}^{p-1}\frac{1}{r} \sum_{\alpha}C_{\alpha}[(\on{ad} x)^{\rhd_1\alpha}]\right)\rhd_1\on{ad} y \\
		&= &\on{ad} (x^{[p]}) \rhd_1\on{ad} y + \sum_{r=1}^{p-1}\frac{1}{r}\sum_{\alpha}C_{\alpha}[(\on{ad} x)^{\rhd_1\alpha}]\rhd_1\on{ad} y \\
		&= & \on{ad}(x^{[p]}\rhd y) + \sum_{r=1}^{p-1}\frac{1}{r}\sum_{\alpha}C_{\alpha}ad[x^{\rhd\alpha}]\rhd_1\on{ad} y \\
		&= & ad(x^{[p]})\rhd y + \sum_{r=1}^{p-1}\frac{1}{r}\sum_{\alpha}C_{\alpha}[x^{\rhd\alpha}]\rhd y)\\
		& =& \on{ad} y(x^{[[p]]}\rhd y) =\on{ad}(\alpha_{\rhd,x}^{p}\rhd y) \\
		&=&  \alpha_{\rhd_1,\on{ad} x}^{p}\rhd_1 \on{ad} y.
	\end{eqnarray*}
	Therefore, $(\on{ad} \lieg, [p]_1,\rhd_1)$ is a restricted post-Lie algebra.
\end{proof}}

\subsection{Restricted action post-Lie algebras}
In this subsection, we give another useful construction of restricted post-Lie algeras, namely the restricted version of action post-Lie algebras originally inspired by \cite[Prop. 4.23]{GLS}. See also \cite[Props.~3.14, 3.19]{EG} for the special situations in characteristic $p = 2, 3$.
\begin{theorem}\label{thm:rapla}
Given a restricted Lie algebra $(\g,[-,-],(-)^{[p]})$ and a commutative associative algebra $A$ over $\bk$, let $\rho:\g\to \Der(A)$ be a restricted Lie algebra homomorphism. Then it induces a restricted post-Lie algebra $(A\otimes \g,[-,-],(-)^{[p]},\rhd)$ as follows.

For any $a,b\in A$ and $x,y \in \g$,
\begin{eqnarray*}
[a\otimes x,b\otimes y] &\coloneqq& ab \otimes [x,y],\\
(a\otimes x)\rhd(b\otimes y) &\coloneqq& ax(b)\otimes y,
\end{eqnarray*}
where we write $\rho(x)(b)$ as $x(b)$ for short, and the $p$-map $(-)^{[p]}$ on $A\otimes \g$ is defined by
\begin{eqnarray*}
(a\otimes x)^{[p]} &\coloneqq& a^p\otimes x^{[p]},
\end{eqnarray*}
and then by Eq.~\eqref{eq:rl-3} for any finite sum of pure tensors.
\end{theorem}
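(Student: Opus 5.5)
The plan is to realise everything inside associative algebras, where $p$-th powers are automatically compatible with brackets, and then reduce the two restrictedness conditions \eqref{eq:rpl-3} and \eqref{eq:rpl-4} to pure tensors via $p$-semilinearity.

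\emph{Step 1: the restricted Lie structure.} Consider the associative algebra $A\otimes\uu(\g)$. Since $A$ is commutative, $A\otimes\g$ with $[a\otimes x,b\otimes y]=ab\otimes[x,y]$ is a Lie subalgebra of its commutator algebra. On pure tensors we have $(a\otimes x)^p=a^p\otimes x^p=a^p\otimes x^{[p]}\in A\otimes \g$. The $p$-th power map on $(A\otimes\uu(\g))^-$ satisfies Jacobson's formula \eqref{eq:rl-3} (Example~\ref{ex:ass-rla}), and $A\otimes\g$ is spanned by pure tensors. Hence $A\otimes\g$ is closed under $p$-th powers, and it is therefore a restricted Lie subalgebra. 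Its $p$-map is exactly the one in the statement, which in particular is well defined.

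\emph{Step 2: the post-Lie axioms.} For $u=b\otimes y$, set $D_u\coloneqq b\,\rho(y)\in\Der(A)$, and extend linearly in $u$. Then $\alpha_{\rhd,u}=D_u\otimes\id$. Since $D_u$ is a derivation of $A$ and $A$ is commutative, $D_u\otimes\id$ is a derivation of the current Lie algebra $A\otimes\g$; this gives \eqref{Post-L-1}. For \eqref{Post-L-2}, I would check directly that $D_{\llb u,v\rrb}=[D_u,D_v]$. This uses only that $\rho$ is a Lie homomorphism and that the $D$'s are derivations: the terms $a\,x(b)\,y-b\,y(a)\,x$ in $[a\rho(x),b\rho(y)]$ match $u\rhd v-v\rhd u$, and $ab\,\rho([x,y])$ matches $[u,v]$. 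This is routine.

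\emph{Step 3: condition \eqref{eq:rpl-3}.} The derivation $D_u\otimes\id$ extends to a derivation $\delta$ of the associative algebra $A\otimes\uu(\g)$. In any associative algebra, the computation of Lemma~\ref{2.7} gives $\delta(X^p)=\ad_X^{p-1}(\delta X)$. Since $X^{[p]}=X^p$ in $A\otimes\uu(\g)$ for every $X\in A\otimes\g$, this yields \eqref{eq:rpl-3} for all $X$, not only for pure tensors.

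\emph{Step 4: condition \eqref{eq:rpl-4}, the main obstacle.} We must show $\alpha_{\rhd,X^{\llb p\rrb}}=\alpha_{\rhd,X}^p$, i.e.\ $D_{X^{\llb p\rrb}}=D_X^p$ in $\Der(A)$.

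First take a pure tensor $X=a\otimes x$. All iterates $X^{\rhd k}=(a\rho(x))^{k-1}(a)\otimes x$ lie in $A\otimes x$, so every bracket $[X^{\rhd\alpha}]$ of length $r\ge2$ vanishes because $[x,x]=0$. In \eqref{eq:rpl-2} only $\alpha=(p)$ survives, with $C_{(p)}=1$. Hence
$$X^{\llb p\rrb}=a^p\otimes x^{[p]}+(a\rho(x))^{p-1}(a)\otimes x .$$
Applying $D$ and using $\rho(x^{[p]})=\rho(x)^p$, we get exactly the right-hand side of \eqref{eq:p-derivation} for $(a\rho(x))^p$. So the identity holds on pure tensors.

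To pass to sums, I would show that both sides satisfy the same Jacobson formula with respect to $\llb-,-\rrb$.
\begin{itemize}
\item For $X\mapsto D_X^p$: $D$ is a Lie homomorphism from $(A\otimes\g,\llb-,-\rrb)$ to $\Der(A)$ by Step 2, and $p$-th powers in $\End(A)$ satisfy \eqref{eq:rl-3}.
\item For $X\mapsto X^{\llb p\rrb}$: inside $\U(A\otimes\g)$, Eq.~\eqref{eq:glproduct} gives $X^{\llb p\rrb}=X^{*_\rhd p}-(X^p-X^{[p]})$ for all $X$.
\begin{itemize}
\item $X^{*_\rhd p}$ satisfies Jacobson's formula for the commutator of $*_\rhd$, which restricts to $\llb-,-\rrb$.
\item $X\mapsto X^p-X^{[p]}$ is additive and $p$-semilinear, because both $p$-maps on $A\otimes\g$ obey \eqref{eq:rl-3} for the same bracket $[-,-]$.
\end{itemize}
\end{itemize}
Hence $X^{\llb p\rrb}$ obeys \eqref{eq:rl-3} with the $s_i$ computed via $\llb-,-\rrb$, and $D_{s_i(X,Y)}$ are the corresponding $s_i$ of $D_X,D_Y$. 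It follows that $X\mapsto D_{X^{\llb p\rrb}}-D_X^p$ is $p$-semilinear. Since it vanishes on pure tensors, it vanishes identically.

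The delicate points are the bookkeeping in this additivity argument and making sure that \eqref{eq:glproduct} is legitimately applied in $\U(A\otimes\g)$ for every $X\in A\otimes \g$.
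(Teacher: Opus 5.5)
Your proof is correct. The one computation you share with the paper is the same: $(a\otimes x)^{\llb p\rrb}=a^p\otimes x^{[p]}+(a\rho(x))^{p-1}(a)\otimes x$, followed by \eqref{eq:p-derivation}. Where you differ is in scope.

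The paper calls the post-Lie axioms standard and says the restricted Lie structure is easy to check by viewing $A\otimes\g$ as a Lie $A$-algebra. It then verifies \eqref{eq:rpl-3} and \eqref{eq:rpl-4} only on pure tensors, where both sides of \eqref{eq:rpl-3} simply vanish. It leaves tacit why this suffices, although $X\mapsto X^{[p]}$ and $X\mapsto X^{\llb p\rrb}$ are not additive.

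Your route supplies those reductions.
\begin{enumerate}[(1)]
\item The embedding $A\otimes\g\subset A\otimes\uu(\g)$ gives well-definedness of the $p$-map and $X^{[p]}=X^p$ for every $X$. Hence \eqref{eq:rpl-3} follows for all $X$ from the associative-derivation computation of Lemma~\ref{2.7}. This is not a formality, since for sums the two sides of \eqref{eq:rpl-3} need not vanish.
\item For \eqref{eq:rpl-4}, you use that \eqref{eq:glproduct} alone gives $X^{\llb p\rrb}=X^{*_\rhd p}-(X^p-X^{[p]})$, hence Jacobson's formula for $(-)^{\llb p\rrb}$ relative to $\llb-,-\rrb$. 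This is the additivity part of the proof of Proposition~\ref{prop:sub-ad-restrict}. It requires none of the restricted post-Lie axioms, so it can be used before they are known.
\item Since $D$ is a Lie homomorphism $(A\otimes\g,\llb-,-\rrb)\to\Der(A)$, item (2) makes $X\mapsto D_{X^{\llb p\rrb}}-D_X^p$ a $p$-semilinear map. The pure-tensor check then finishes the argument.
\end{enumerate}

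The delicate points you flag are fine. \eqref{eq:glproduct} holds on $(A\otimes\g)^{(p)}\supseteq A\otimes\g$ for the post-Lie algebra constructed in your Step~2.

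The paper's version is shorter; yours is the complete argument.
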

\begin{proof}
It is a standard construction on the intuition of \cite{BK,JZ} that $(A\otimes \g,[-,-],\rhd)$ is a post-Lie algebra. On the other hand, it is easy to check that $(A\otimes \g,[-,-],(-)^{[p]})$ is a restricted Lie algebra over $\bk$, by viewing $(A\otimes \g,[-,-])$ as a Lie $A$-algebra.

Next we check that Eq.~\eqref{eq:rpl-3} holds. Indeed,
$$
(b\otimes y)\rhd (a\otimes x)^{[p]} \ =\ (b\otimes y)\rhd (a^p\otimes x^{[p]}) \ =\ b y(a^p)\otimes x^{[p]}
\ =\ pba^{p-1}y(a)\otimes x^{[p]}=0.
$$
On the other hand,
\begin{eqnarray*}
&&\hspace{-3cm}{[\underbrace{a\otimes x,[a\otimes x,[\cdots [a\otimes x}_{p-1},(b\otimes y)\rhd (a\otimes x)]\cdots ]]]}\\
&=&{[\underbrace{a\otimes x,[a\otimes x,[\cdots [a\otimes x}_{p-1},by(a)\otimes x]\cdots ]]]}\\
&=& ba^{p-1}y(a)\otimes [\underbrace{x,[x,[\cdots [x}_{p-1},x]\cdots ]]]\ =\ 0.
\end{eqnarray*}

Finally, we check that Eq.~\eqref{eq:rpl-4} holds. First we compute that
$$(a\otimes x)^{\llb p\rrb}\ \stackrel{\eqref{eq:rpl-2}}{=}\ (a\otimes x)^{[p]}+(a\otimes x)^{\rhd p}
\ =\ a^p\otimes x^{[p]} +
(a\rho(x))^{p-1}(a)\otimes x,$$
as $[(a\otimes x)^{\rhd \alpha}]=0$ whenever the composition $\alpha$ is of length $\geq2$. Then
\begin{eqnarray*}
 (a\otimes x)^{\llb p\rrb}\rhd(b\otimes y) &=&
 \big(a^p\otimes x^{[p]} +
(a\rho(x))^{p-1}(a)\otimes x\big)\rhd(b\otimes y)\\
 &=& a^p\rho(x^{[p]})(b)\otimes y +
(a\rho(x))^{p-1}(a)\rho(x)(b)\otimes y\\
&=& \big(a^p\rho(x)^p + (a\rho(x))^{p-1}(a)\rho(x)\big)(b)\otimes y,
\end{eqnarray*}
where the last equality holds as $\rho$ is a restricted Lie algebra homomorphism. Also, we have
\begin{eqnarray*}
\underbrace{(a\otimes x) \rhd ((a\otimes x) \rhd \cdots ((a\otimes x)}_{p}\rhd (b\otimes y))\cdots)&=&
(a\rho(x))^p(b)\otimes y.
\end{eqnarray*}
Hence, they are equal by Eq.~\eqref{eq:p-derivation}.
\end{proof}

\section{Restricted Rota-Baxter Lie algebras and their constructions}\label{sec:rrla}

In this section, we introduce restricted Rota-Baxter Lie algebras of arbitrary weight as our primary research subject, examine their basic properties, and provide two natural constructions. First, we recall some necessary concepts referred to \cite{G,BGN}.

\subsection{Rota-Baxter algebras and  Rota-Baxter Lie algebras}

\begin{defn}
Let $\lambda$ be a fixed element in $\bk$. A {\bf Rota-Baxter algebra of weight $\lambda$} is a pair $(A,P)$ where $A$ is
an associative algebra and $P$ is a linear endomorphism of $A$ satisfying
$$P(x)P(y)=P(P(x)y)+P(xP(y))+\lambda P(xy),\quad x,y\in A.$$
The map $P$ is called a {\bf Rota-Baxter operator of weight $\lambda$}.

Note that $(A,P)$ has the {\bf double structure} $A_P\coloneqq (A,\star_P)$, where
$\star_P$ is an associative product on $A$ called the {\bf double product} and defined by
\begin{equation}
\label{eq:RBA-double}
x\star_P y \coloneqq P(x) y+x P(y)+\lambda xy,\quad x,y\in A,
\end{equation}
such that $P:A_P\to A$ is an algebra homomorphism .

Similarly, a {\bf Rota-Baxter Lie algebra $(\g,[-,-],R)$ of weight $\lambda$} has the compatibility
$$[R(x),R(y)]=R([R(x),y])+R([x,R(y)])+\lambda R([x,y]),\quad x,y\in \g.$$
Any Rota-Baxter Lie algebra $(\g,[-,-],R)$ has a {\bf descendent Lie algebra}
$\g_R\coloneqq (\g,[-,-]_R)$ defined by
\begin{equation}\label{eq:RB-L-des}
[x,y]_R \coloneqq [R(x),y]+[x,R(y)]+\lambda[x,y],\quad x,y\in \g,
\end{equation}
such that $R:\g_R\to\g$ is a Lie algebra homomorphism.
\end{defn}

Rota-Baxter Lie algebras have the following splitting property.
\begin{prop}[{\cite[Corollary~5.6]{BGN}}]\label{prop:split}
Let $(\g,[-,-],R)$ be a Rota-Baxter Lie algebra of weight $\lambda$.
Then $(\g,[-,-]_\lambda,\rhd_R)$ is a post-Lie algebra, called the {\bf splitting post-Lie algebra}, where
$$[x,y]_\lambda\coloneqq \lambda[x,y],\quad x\rhd_R y\coloneqq [R(x),y],\quad x,y\in \g.$$
\end{prop}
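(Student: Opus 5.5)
We verify the two post-Lie axioms \eqref{Post-L-1} and \eqref{Post-L-2} for the pair $([-,-]_\lambda,\rhd_R)$ directly. First, $(\g,[-,-]_\lambda)$ is a Lie algebra, since rescaling a Lie bracket by the constant $\lambda$ preserves antisymmetry and the Jacobi identity. (When $\lambda=0$ it is abelian, and we will land in the pre-Lie case.)

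For \eqref{Post-L-1}, we have to show
$$x\rhd_R[y,z]_\lambda=[x\rhd_R y,z]_\lambda+[y,x\rhd_R z]_\lambda.$$
Substituting the definitions, this reads
$$\lambda[R(x),[y,z]]=\lambda[[R(x),y],z]+\lambda[y,[R(x),z]].$$
This is exactly $\lambda$ times the Jacobi identity, i.e.\ the statement that $\ad_{R(x)}$ is a derivation of $[-,-]$. No Rota-Baxter property is needed here.

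For \eqref{Post-L-2}, we first identify the subjacent bracket:
$$[x,y]_\lambda+x\rhd_R y-y\rhd_R x=[R(x),y]+[x,R(y)]+\lambda[x,y]=[x,y]_R,$$
which is the descendent bracket \eqref{eq:RB-L-des}. The left side of \eqref{Post-L-2} is then $[R([x,y]_R),z]$. The right side is
$$[R(x),[R(y),z]]-[R(y),[R(x),z]]=[[R(x),R(y)],z],$$
again by Jacobi. The Rota-Baxter identity of weight $\lambda$ says precisely that $[R(x),R(y)]=R([x,y]_R)$. Equivalently, $R:\g_R\to\g$ is a homomorphism, as noted after \eqref{eq:RB-L-des}. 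So the two sides agree.

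The only step that needs any care is this second axiom. There the main point is recognizing that the combination $[x,y]_\lambda+x\rhd_R y-y\rhd_R x$ equals $[x,y]_R$. Once that is seen, the Rota-Baxter identity closes the argument immediately, and the remainder is a routine application of the Jacobi identity.
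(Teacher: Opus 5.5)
Your verification is correct and complete. \eqref{Post-L-1} is just $\lambda$ times the Jacobi identity for $\ad_{R(x)}$. For \eqref{Post-L-2}, the observation $[x,y]_\lambda+x\rhd_R y-y\rhd_R x=[x,y]_R$ combined with Jacobi reduces the axiom exactly to the Rota-Baxter identity $[R(x),R(y)]=R([x,y]_R)$.

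The paper gives no proof of its own; it cites \cite[Corollary~5.6]{BGN}. There, as the introduction indicates, the statement is the Rota-Baxter case of the splitting property for $\mathcal{O}$-operators of weight $\lambda$, namely the adjoint action of $\g$ on itself. Your direct computation is that argument specialized to this case. It is self-contained and valid in any characteristic, which is what the later restricted results rely on.
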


Also, we need the following result for idempotent Rota-Baxter Lie algebras, analogous to \cite[Prop.~1.1.15]{G}.
\begin{prop}\label{prop:idempotent-rb}
Let $(\g,[-,-],R)$ be a Rota-Baxter Lie algebra of weight $\lambda$. If $R$ is idempotent, then
$$(1+\lambda)R([R(x),y])=(1+\lambda)([R(x),R(y)]-\lambda R([x,y]))=0,\quad x,y\in\g.$$
In particular, such an idempotent Rota-Baxter operator $R$ is a Lie algebra homomorphism from $(\g,[-,-]_\lambda) $ to $(\g,[-,-])$ with $R(\g)$ abelian and $\ker R$ stable under the adjoint action of $R(\g)$, when $\lambda\neq -1$.
\end{prop}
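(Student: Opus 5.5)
The plan is to use the idempotence $R^2=R$ together with the Rota-Baxter identity of weight $\lambda$, written in the form
$$R([R(x),y])+R([x,R(y)]) = [R(x),R(y)]-\lambda R([x,y]),\quad x,y\in\g. \tag{$*$}$$
First, replace $x$ by $R(x)$ in $(*)$. Since $R^2=R$, the left side becomes $R([R(x),y])+R([R(x),R(y)])$, and the right side becomes $[R(x),R(y)]-\lambda R([R(x),y])$. Apply $R$ to the Rota-Baxter identity itself and use $R^2=R$: this gives $R([R(x),R(y)])=[R(x),R(y)]$, because $R$ applied to the right-hand side of $(*)$ reproduces $R$ applied to the left-hand side, i.e. $R$ of an element of $R(\g)$ is itself. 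Substituting this, the equation from $x\mapsto R(x)$ reduces to
$$R([R(x),y]) = -\lambda R([R(x),y]),\quad\text{i.e.}\quad (1+\lambda)R([R(x),y])=0.$$
This is the first claimed equality.

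For the second expression, I would substitute $y\mapsto R(y)$ symmetrically to get $(1+\lambda)R([x,R(y)])=0$. Then I multiply $(*)$ by $(1+\lambda)$. Its left side vanishes, which yields $(1+\lambda)([R(x),R(y)]-\lambda R([x,y]))=0$. This equals $(1+\lambda)R([R(x),y])$, since both are zero. I will double-check this chain of equalities, since the statement writes them as an equality of three terms that are all zero.

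For the ``in particular'' part, assume $\lambda\ne-1$ and divide by $1+\lambda$. This gives $R([R(x),y])=0$ and $[R(x),R(y)]=\lambda R([x,y])=R([x,y]_\lambda)$, so $R$ is a homomorphism $(\g,[-,-]_\lambda)\to(\g,[-,-])$. Replacing $y$ by $R(y)$ in $R([R(x),y])=0$ gives $[R(x),R(y)]=R([R(x),R(y)])=0$, so $R(\g)$ is abelian. Finally, for $y\in\ker R$ we have $R([R(x),y])=0$, so $\ker R$ is stable under $\ad_{R(x)}$.

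The only delicate step is justifying $R([R(x),R(y)])=[R(x),R(y)]$. It follows directly from applying $R$ to $(*)$ and comparing with $(*)$ via $R^2=R$: both sides of $(*)$ lie in $R(\g)$ on the left, hence the right side $[R(x),R(y)]-\lambda R([x,y])$ lies in $R(\g)$, and therefore so does $[R(x),R(y)]$.
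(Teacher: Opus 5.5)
Your proof is correct and follows essentially the same route as the paper. You substitute $x\mapsto R(x)$ into the Rota-Baxter identity and use that $R$ fixes $[R(x),R(y)]\in R(\g)$ to get $(1+\lambda)R([R(x),y])=0$, from which everything else follows. The only differences are cosmetic: you obtain $(1+\lambda)R([x,R(y)])=0$ by a second substitution rather than by antisymmetry, and you deduce that $R(\g)$ is abelian from $R([R(x),R(y)])=0$ together with $R$ being the identity on $R(\g)$, rather than from $[R^2(x),R(y)]=\lambda R([R(x),y])$.
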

\begin{proof}
Since $R^2=R$ and $R(\g)$ is a Lie subalgebra of $\g$, we have
\begin{align*}
[R(x),R(y)]&=[R^2(x),R(y)]=R([R^2(x),y])+R([R(x),R(y)])+\lambda R([R(x),y])\\
&=R([R(x),R(y)])+(1+\lambda) R([R(x),y])
=[R(x),R(y)]+(1+\lambda) R([R(x),y]).
\end{align*}
Hence, $(1+\lambda) R([R(x),y])=0$ and then
$[R(x),R(y)]-\lambda R([x,y])=R([R(x),y])-R([R(y),x])$ implies that
$(1+\lambda)([R(x),R(y)]-\lambda R([x,y]))=0$.

In particular, if $\lambda\neq-1$, then $R([R(x),y])=0$ and $[R(x),R(y)]= \lambda R([x,y])=R([x,y]_{\lambda})$ for any $x,y\in \g$, and so $[R(x),R(y)]=[R^2(x),R(y)]=\yn{\lambda}R([R(x),y])=0$.
\end{proof}

\subsection{Restricted Rota-Baxter Lie algebras and their properties}
In the light of Definition~\ref{defn:rpl} of restricted post-Lie algebras, we introduce our key notion of a restricted Rota-Baxter Lie algebra of arbitrary weight.
\begin{defn}\label{defi:rrbl}
Let $(\g,[-,-],(-)^{[p]})$ be a restricted Lie algebra over $\bk$. A Rota-Baxter operator $R$ of weight $\lambda\in \bk$ on $\g$ is called {\bf restricted}, if
\begin{equation}\label{eq:rrbl}
R(x^{[p]_R})=R(x)^{[p]},\quad x\in\g,
\end{equation}
where the map $(-)^{[p]_R}:\g\to \g$ is given by
\begin{align}
\label{eq:rrbl-jac}
x^{[p]_R}  &\coloneqq \lambda^{p-1}x^{[p]}+\sum_{r=1}^{p-1}\dfrac{1}{r}\lambda^{r-1}\sum_{\alpha\vDash p\atop\ell(\alpha)=r}C_\alpha[x^{\rhd_R \alpha}]
\end{align}
with respect to the induced product $\rhd_R=[R(-),-]$, where $C_\alpha$ is the coefficient defined in \eqref{eq:set-part}. 
\end{defn}

\begin{remark}
Let $(\g,[-,-],(-)^{[p]},R)$ be a restricted Rota-Baxter Lie algebra of nonzero weight $\lambda$. By definition, $(\g,[-,-],(-)^{[p]},\lambda^{-1}R)$ is a restricted Rota-Baxter Lie algebra of weight $1$.
\end{remark}

First we provide an intrinsic characterization of restricted Rota-Baxter Lie algebras in the perspective of graph subalgebras, which is consistent with the combinatorial formulation in Definition~\ref{defi:rrbl}. A technical lemma is needed.

\begin{lemma}\label{lemma:adjoint_action}
	Given a Lie algebra $\g$ with $D\in \on{Der}(\g)$, $\lambda\in \bk $ and $\ x\in\g, $ one has that
	\begin{equation}\label{eq:adjoint_action}
(\lambda\on{ad}x+D)^n(x) =\sum_{i=1}^{n} \lambda^{i-1} \sum_{\alpha\vDash n+1\atop\ell(\alpha)=i}C_\alpha D^{\alpha}(x),\quad n= 1,\dots,p-1,
	\end{equation}
	where $D^{\alpha}(x)\coloneqq [D^{\alpha_1-1}(x),[ \cdots,[D^{\alpha_{i-1}-1}(x),D^{\alpha_i-1}(x)]\cdots]]$ for $\alpha=(\alpha_1,\dots, \alpha_i)\vDash n+1$.
\end{lemma}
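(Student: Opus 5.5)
The plan is to prove \eqref{eq:adjoint_action} by induction on $n$, using the combinatorial meaning of $C_\alpha$ recorded after \eqref{eq:set-part}: $C_\alpha$ is the number of set partitions $\pi=(B_1,\dots,B_r)$ of $[n+1]$ with $\max B_1<\cdots<\max B_r$ and $|B_i|=\alpha_i$. Accordingly, I will prove the stronger statement
\begin{equation*}
(\lambda\,\ad x+D)^n(x)=\sum_{\pi}\lambda^{r-1}\,[D^{|B_1|-1}(x),[\cdots,[D^{|B_{r-1}|-1}(x),D^{|B_r|-1}(x)]\cdots]],
\end{equation*}
where $\pi=(B_1,\dots,B_r)$ runs over all set partitions of $[n+1]$ with blocks ordered by increasing maxima. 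Grouping the partitions by their block-size composition $\alpha$ gives exactly the right-hand side of \eqref{eq:adjoint_action}. The coefficients $C_\alpha$ are integers, so the identity holds over $\ZZ$ and hence in any characteristic. The hypothesis $n\le p-1$ only ensures that the $C_\alpha$ are read modulo $p$ consistently with \eqref{eq:set-part}, which involves inverses of integers at most $n$.

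\textbf{Base case.} For $n=0$ both sides equal $x$, since the only partition of $[1]$ is $(\{1\})$. If one prefers to start at $n=1$: the left side is $\lambda[x,x]+D(x)=D(x)$. On the right, the partition $(\{1,2\})$ contributes $D(x)$, and the partition $(\{1\},\{2\})$ contributes $\lambda[x,x]=0$.

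\textbf{Inductive step.} Assume the formula for $n$ and apply $\lambda\,\ad x+D$ to each term indexed by $\pi=(B_1,\dots,B_r)$, a partition of $[n+1]$. Because $D$ is a derivation, the Leibniz rule applied to the nested bracket produces $r$ terms: the $j$-th has $D^{|B_j|}(x)$ in slot $j$ and leaves the other slots unchanged. The operator $\lambda\,\ad x$ produces $\lambda[x,\text{term}]$, that is, a new first slot $x=D^{0}(x)$ and one extra factor of $\lambda$.

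Now match these terms with partitions $\pi'$ of $[n+2]$ by singling out the element $1$ and shifting the labels of $\pi$ up by one.
\begin{itemize}
\item If $\{1\}$ is a singleton block of $\pi'$, then it has the smallest maximum, so it is the first block. Removing it leaves a shifted copy of some $\pi$, and $\pi'$ has one more block than $\pi$. This corresponds exactly to the $\lambda\,\ad x$ term, with the correct power $\lambda^{r}$.
\item If $1$ lies in a block of $\pi'$ of size at least $2$, then removing $1$ does not change any block maximum, so the block order is preserved. The remaining shifted partition is some $\pi$, and $1$ was added to its $j$-th block. This corresponds exactly to the $j$-th Leibniz term: the size of block $j$ increases by one, the exponent of $D$ in slot $j$ increases by one, and the power of $\lambda$ is unchanged.
\end{itemize}
This gives a bijection between terms, so the formula holds for $n+1$.

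The main point needing care is this bijection. One must choose to insert the new element as the minimum rather than the maximum, because adding a minimum never disturbs the ordering of the blocks by their maxima, whereas adding a maximum would. Everything else is bookkeeping.
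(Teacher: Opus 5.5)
Your proof is correct and is essentially the paper's argument. Both use induction on $n$: the Leibniz rule for $D$ raises one slot's exponent, the $\lambda\,\ad x$ term prepends a new first slot $x$, and the terms are matched through set partitions ordered by block maxima, inserting a new minimal element either into an existing block or as a singleton block, then shifting labels.

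Indexing by set partitions rather than by compositions weighted with $C_\alpha$ only spells out the paper's final combinatorial step. One small point: your grouped sum also contains the length-$(n+1)$ composition $(1,\dots,1)$. Its term $[x,[x,\dots,[x,x]]]$ vanishes for $n\ge 1$, which is why the stated sum can stop at $i=n$.
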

\begin{proof}
Eq.~\eqref{eq:adjoint_action} can be proven by induction on $n$. It is trivial if $n=1.$ Assume that Eq.~\eqref{eq:adjoint_action} holds for some $n\geq1$. Denote $\epsilon_j =(0,\dots,0,\stackrel{j\ {\rm th}}{1},0,\dots)$ for any $j\geq1$. Then, we have
\begin{eqnarray*}
(\lambda\on{ad}x+D)^{n+1}(x)&=& (\lambda\on{ad}x+D)\sum_{i=1}^{n} \lambda^{i-1} \sum_{\beta\vDash n+1\atop\ell(\beta)=i}C_\beta D^{\beta}(x)\nonumber\\
&=& \sum_{i=1}^{n} \lambda^{i} \sum_{\beta\vDash n+1\atop\ell(\beta)=i}C_\beta \on{ad}x\circ D^{\beta}(x)+\sum_{i=1}^{n} \lambda^{i-1} \sum_{\beta\vDash n+1\atop\ell(\beta)=i}C_\beta D\circ D^{\beta}(x)\\
&=& \sum_{i=1}^{n} \lambda^{i} \sum_{\beta\vDash n+1\atop\ell(\beta)=i}C_\beta  D^{(1,\,\beta)}(x)+\sum_{i=1}^{n} \lambda^{i-1} \sum_{\beta\vDash n+1\atop\ell(\beta)=i}C_\beta \sum_{j=1}^{i}D^{\beta+\epsilon_j}(x)\\
&=& \sum_{k=1}^{n+1} \lambda^{k-1} \sum_{\alpha\vDash n+2\atop \ell(\alpha)=k}
C_\alpha D^\alpha(x),
\end{eqnarray*}
where the first equality is due to the induction hypothesis, and the last equality is obtained by the combinatorial meaning
of the coefficients $C_\alpha$'s given in Eq.~\eqref{eq:set-part} as follows.
For any composition $\alpha=(\alpha_1,\dots, \alpha_k)$ of $n+2$, $C_\alpha$ counts the number of set partitions $\pi=(B_1,\dots,B_k)$ of $[n+2]$ with blocks $B_j$ of size $\alpha_j$ and satisfying $\max B_1<\cdots<\max B_k$. Then, given any such $\pi$,
there exists a unique set partition $\pi'$ of $[n+1]$ to obtain $\pi$ by inserting the number $0$ into a block of $\pi'$ or creating one more block $\{0\}$, and then adding $1$ to every number. So the proof is finished.
\end{proof}

Now given a restricted Lie algebras $(\g,[-,-],(-)^{[p]})$ and $\lambda\in\bk$,  $$\g_\lambda\coloneqq(\g,[-,-]_\lambda,(-)^{[p]_\lambda})$$
is also restricted Lie algebra, where $(-)^{[p]_\lambda}\coloneqq\lambda^{p-1}(-)^{[p]}$.
Then Eq.~\eqref{eq:rl-2} tells us that the adjoint map
$\ad:\g\to \Der(\g_\lambda)$
is a restricted Lie algebra homomorphism such that $\ad_x$ is a restricted derivation on $\g_\lambda$ for any $x\in\g$, namely $\ad_{x^{[p]}}=(\ad_x)^p$ and
$$\ad_x(y^{[p]_\lambda})=[\underbrace{y,\cdots [y}_{p-1},\ad_x y]_\lambda\cdots]_\lambda,\quad x\in\g,\ y\in\g_\lambda.$$
By \cite[Theorem~2.5]{SF},
we can define a $p$-map $(-)^{[p]_\rtimes}$ on the semidirect product $\g_\lambda\rtimes_{\ad} \g$ by
$$(x,y)^{[p]_\rtimes}\coloneqq \bigg(x^{[p]_\lambda}+\sum_{i=1}^{p-1}\lambda^{i-1}s_{i}(x,y),y^{[p]}\bigg),$$
where $x,y \in \g$ and $s_i(x,y)$ is given as in Eq.~\eqref{eq:rl-3}, namely
$$[\underbrace{[\lambda x+y,[\cdots [\lambda x+y}_{p-1},x]\cdots]]=\sum_{i=1}^{p-1}\lambda^{i-1}is_{i}(x,y).$$

\begin{theorem}\label{thm:rrb-graph}
Given a restricted Lie algebra $(\g,[-,-],(-)^{[p]})$, a linear operator $R$ on $\g$ is a restricted Rota-Baxter operator of weight $\lambda$ if and only if the corresponding graph subspace
$$G(\g,R)\coloneqq\{(x,R(x))\,|\,x\in \g\}$$
of $\g\oplus\g$ is a restricted Lie subalgebra of $(\g_\lambda\rtimes_{\ad} \g,(-)^{[p]_\rtimes})$.
\end{theorem}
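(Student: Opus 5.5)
The proof splits into two parts: the Lie subalgebra condition and the $p$-map condition.

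First, I will recall that $G(\g,R)$ is a Lie subalgebra of $\g_\lambda\rtimes_{\ad}\g$ if and only if $R$ is a Rota-Baxter operator of weight $\lambda$. The bracket in the semidirect product is
$$[(x,a),(y,b)]=(\lambda[x,y]+[a,y]-[b,x],[a,b]).$$
For $a=R(x)$ and $b=R(y)$ the first component is exactly $[x,y]_R$ from \eqref{eq:RB-L-des}. The bracket therefore lies in $G(\g,R)$ precisely when $R([x,y]_R)=[R(x),R(y)]$, which is the Rota-Baxter identity. It remains to show that, for a Rota-Baxter operator $R$, the graph is closed under $(-)^{[p]_\rtimes}$ if and only if \eqref{eq:rrbl} holds. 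Since
$$(x,R(x))^{[p]_\rtimes}=\Big(x^{[p]_\lambda}+\sum_{i=1}^{p-1}\lambda^{i-1}s_i(x,R(x)),\,R(x)^{[p]}\Big),$$
closure means $R$ of the first component equals $R(x)^{[p]}$. So the whole statement reduces to the identity
$$\lambda^{p-1}x^{[p]}+\sum_{i=1}^{p-1}\lambda^{i-1}s_i(x,R(x))=x^{[p]_R}.$$
Here one must also know that the $p$-map on a restricted subalgebra need only be checked on elements: closure on all elements $(x,R(x))$ is exactly closure of $G(\g,R)$. No additivity argument is needed, since $G(\g,R)$ is parametrized linearly by $x$.

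The key step is to compare the $s_i$ term with the combinatorial sum in \eqref{eq:rrbl-jac}. By definition,
$$\sum_i\lambda^{i-1}is_i(x,R(x))=(\lambda\ad_x+\ad_{R(x)})^{p-1}(x),$$
with $\lambda$ playing the role of $t$. Setting $D=\ad_{R(x)}$, which is a derivation of $\g$, we have $D^{k-1}(x)=x^{\rhd_R k}$, so that $D^\alpha(x)=[x^{\rhd_R\alpha}]$. Lemma~\ref{lemma:adjoint_action} with $n=p-1$ then gives
$$(\lambda\ad_x+D)^{p-1}(x)=\sum_{r=1}^{p-1}\lambda^{r-1}\sum_{\alpha\vDash p,\ \ell(\alpha)=r}C_\alpha[x^{\rhd_R\alpha}].$$
Note that $\ell(\alpha)=p$ is excluded, which is consistent because the only composition of length $p$ is $(1,\dots,1)$, and its bracket $[x,[x,\dots]]$ vanishes.

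The main obstacle is matching coefficients of $\lambda$. Since $\lambda$ is a fixed scalar rather than the indeterminate $t$, the left side mixes the powers, and I cannot extract $s_i$ directly. Instead I will treat $\lambda$ as formal. Both sides are specializations at $t=\lambda$ of polynomial identities in $\bk[t]\otimes\g$: the $t^{i-1}$ coefficient of $(t\ad_x+D)^{p-1}(x)$ is $is_i(x,R(x))$, and the lemma holds verbatim over $\bk[t]$, since its induction is purely formal. Comparing coefficients of $t^{r-1}$ gives
$$r\,s_r(x,R(x))=\sum_{\alpha\vDash p,\ \ell(\alpha)=r}C_\alpha[x^{\rhd_R\alpha}].$$
Because $1\le r\le p-1$, $r$ is invertible mod $p$, so
$$s_r(x,R(x))=\frac1r\sum_{\alpha\vDash p,\ \ell(\alpha)=r}C_\alpha[x^{\rhd_R\alpha}].$$
Multiplying by $\lambda^{r-1}$ and summing over $r$ yields exactly \eqref{eq:rrbl-jac}. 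This establishes the displayed identity, and hence the equivalence.
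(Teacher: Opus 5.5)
Your proposal is correct and follows the paper's proof essentially step by step. The bracket computation on the graph yields the Rota--Baxter identity, and Lemma~\ref{lemma:adjoint_action} with $D=\ad_{R(x)}$, $n=p-1$ identifies $(x,R(x))^{[p]_\rtimes}$ with $(x^{[p]_R},R(x)^{[p]})$; your passage to a formal variable $t$ makes explicit the justification the paper leaves implicit when it says ``comparing the coefficients of $\lambda^{i-1}$''.
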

\begin{proof}
Since the Lie bracket $[-,-]_\rtimes$ of $\g_\lambda\rtimes_{\ad} \g$ means that
$$[(x,R(x)),(y,R(y))]_\rtimes=([x,y]_\lambda+[R(x),y]-[R(y),x],[R(x),R(y)]),\quad x,y\in\g,$$
the linear operator $R$ on $\g$ is a Rota-Baxter operator of weight $\lambda$ if and only if  the graph subspace $G(\g,R)$
of $\g\oplus\g$ is a Lie subalgebra of $\g_\lambda\rtimes_{\ad} \g$.

On the other hand, we take $D=\ad_{R(x)}$ and $n=p-1$ in Lemma~\ref{lemma:adjoint_action} to obtain that
$$[\underbrace{[\lambda x+R(x),[\cdots [\lambda x+R(x)}_{p-1},x]\cdots]]=\sum_{i=1}^{p-1}\lambda^{i-1}is_{i}(x,R(x)) \stackrel{\eqref{eq:adjoint_action}}{=} \sum_{i=1}^{p-1}\lambda^{i-1}\sum_{\alpha\vDash p\atop\ell(\alpha)=i}C_\alpha[x^{\rhd_R \alpha}].$$
Hence, comparing the coefficients of $\lambda^{i-1}$ on both sides, we should have
$$x^{[p]_R}
\stackrel{\eqref{eq:rrbl-jac}}{=} \lambda^{p-1}x^{p}+\sum_{i=1}^{p-1}\dfrac{1}{i}\lambda^{i-1}\sum_{\alpha\vDash p\atop\ell(\alpha)=i}C_\alpha[x^{\rhd_R \alpha}]
=x^{[p]_\lambda}+\sum_{i=1}^{p-1}\lambda^{i-1}s_{i}(x,R(x)).$$
That is, $(x,R(x))^{[p]_\rtimes}=(x^{[p]_R},R(x)^{[p]})$.
So $R$ is restricted, i.e. Eq.~\eqref{eq:rrbl} holds, if and only if $G(\g,R)$ is a restricted Lie subalgebra of $(\g_\lambda\rtimes_{\ad} \g,(-)^{[p]_\rtimes})$.
\end{proof}

Next we give a description of idempotent restricted Rota-Baxter Lie algebras. Actually, idempotent Rota-Baxter operators on associative algebras are quite useful in renormalization of quantum field theory; see e.g.~\cite{EGK}.
\begin{theorem}\label{thm:idem-rrb}
Given a restricted Lie algebra $(\g,[-,-],(-)^{[p]})$, an idempotent Rota-Baxter operator $R$ of weight $\lambda\neq -1$ on $\g$ is restricted  if and only if
$R$ is a restricted Lie algebra homomorphism from $(\g,[-,-]_\lambda,(-)^{[p]_\lambda})$ to  $(\g,[-,-],(-)^{[p]})$.
\end{theorem}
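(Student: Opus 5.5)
By Proposition~\ref{prop:idempotent-rb}, since $R^2=R$ and $\lambda\neq-1$, we have $R([R(x),y])=0$ and $[R(x),R(y)]=\lambda R([x,y])=R([x,y]_\lambda)$ for all $x,y\in\g$, and moreover $R(\g)$ is abelian. So $R$ is already a Lie algebra homomorphism $\g_\lambda\to\g$. It remains to show that the restrictedness condition \eqref{eq:rrbl} is equivalent to
$$R(x^{[p]_\lambda})=R(\lambda^{p-1}x^{[p]})=R(x)^{[p]},\quad x\in\g.$$
By \eqref{eq:rrbl-jac}, $x^{[p]_R}=x^{[p]_\lambda}+\sum_{r=1}^{p-1}\frac1r\lambda^{r-1}\sum_{\alpha\vDash p,\,\ell(\alpha)=r}C_\alpha[x^{\rhd_R\alpha}]$. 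The whole proof therefore reduces to showing that
$$R\big([x^{\rhd_R\alpha}]\big)=0\quad\text{for every composition }\alpha\vDash p\text{ of length }r<p.$$
Once this holds, $R(x^{[p]_R})=R(x^{[p]_\lambda})$ and the equivalence is immediate.

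For this vanishing, I would first look at the iterated products $x^{\rhd_R k}$. We have $x^{\rhd_R1}=x$ and, for $k\ge2$, $x^{\rhd_R k}=[R(x),x^{\rhd_R(k-1)}]$. By $R([R(x),y])=0$, this gives $R(x^{\rhd_R k})=0$ for every $k\ge2$.

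Now take $[x^{\rhd_R\alpha}]=[x^{\rhd_R\alpha_1},[\cdots,x^{\rhd_R\alpha_r}]]$ with $r\ge2$; the case $r=1$ is covered by the previous paragraph, since $\alpha=(p)$ with $p\ge2$. Applying $[R(u),R(v)]=\lambda R([u,v])$ and $\lambda\neq0$, we get
$$R([u,v])=\lambda^{-1}[R(u),R(v)]=0$$
by abelianness of $R(\g)$. If $\lambda=0$, instead the Rota-Baxter identity itself gives $[R(u),R(v)]=R([R(u),v])+R([u,R(v)])$ and I need another route. In that case I would note that for $\lambda=0$ the terms with $r\ge2$ carry the factor $\lambda^{r-1}=0$, so only $r=1$ survives, and that term is handled above. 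For $\lambda\neq0,-1$, the bracket argument kills every term with $r\ge2$. In both cases $R(x^{[p]_R})=R(\lambda^{p-1}x^{[p]})$, as needed.

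The main obstacle is being careful with this case split and the conventions. I must check that the $\lambda=0$ case is consistent, where $x^{[p]_\lambda}=0$ (taking $0^{p-1}=0$), and that the $r=1$ term is exactly $C_{(p)}x^{\rhd_R p}$. With these checked, both directions of the iff follow from the identity $R(x^{[p]_R})=R(x^{[p]_\lambda})$, together with the homomorphism property from Proposition~\ref{prop:idempotent-rb}.
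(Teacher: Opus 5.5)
Your proposal is correct and follows essentially the paper's argument: you use Proposition~\ref{prop:idempotent-rb} to get $R(x^{\rhd_R k})=0$ for $k\ge2$ and to kill the bracketed terms, so that $R(x^{[p]_R})=R(x^{[p]_\lambda})$. The only cosmetic difference is that the paper avoids your split into $\lambda=0$ and $\lambda\neq0$. It absorbs $\lambda^{r-1}$ into $r-1$ brackets $[-,-]_\lambda$ and uses that $R\colon(\g,[-,-]_\lambda)\to\g$ is a homomorphism with abelian image, which handles every $\lambda\neq-1$ at once.
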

\begin{proof}
According to Proposition~\ref{prop:idempotent-rb},
an idempotent Rota-Baxter operator $R$ of weight $\lambda\neq -1$ on $\g$ is a Lie algebra homomorphism from $(\g,[-,-]_\lambda)$ to  $(\g,[-,-])$ such that $R(\g)$ is abelian and $\ker R$ is stable under the adjoint action of $R(\g)$. Therefore, $R(x^{\rhd_R k})=0$ for $k\geq2$ and
\begin{eqnarray*}
R(x^{[p]_R}) &\stackrel{\eqref{eq:rrbl-jac} }{=}& \lambda^{p-1}R(x^{[p]})+\sum_{r=1}^{p-1}\dfrac{1}{r}\lambda^{r-1}
\sum_{\alpha\vDash p\atop\ell(\alpha)=r}C_\alpha R([x^{\rhd_R \alpha_1},[\cdots[x^{\rhd_R \alpha_{r-1}},x^{\rhd_R \alpha_r}]\cdots]])\\
&=& R(x^{[p]_\lambda})+\sum_{r=1}^{p-1}\dfrac{1}{r}
\sum_{\alpha\vDash p\atop\ell(\alpha)=r}C_\alpha
R([x^{\rhd_R \alpha_1},[\cdots[x^{\rhd_R \alpha_{r-1}},x^{\rhd_R \alpha_r}]_\lambda\cdots]_\lambda]_\lambda)\\
&=& R(x^{[p]_\lambda})+\sum_{r=1}^{p-1}\dfrac{1}{r}
\sum_{\alpha\vDash p\atop\ell(\alpha)=r}C_\alpha
[R(x^{\rhd_R \alpha_1}),[\cdots[R(x^{\rhd_R \alpha_{r-1}}),R(x^{\rhd_R \alpha_r})]_\lambda\cdots]_\lambda]_\lambda\ =\ R(x^{[p]_\lambda}).
\end{eqnarray*}
So Eq.~\eqref{eq:rrbl} is equivalent to saying that $R(x^{[p]_\lambda})=R(x)^{[p]}$ for any $x\in\g$, and $R$ is a restricted Lie algebra homomorphism from $(\g,[-,-]_\lambda,(-)^{[p]_\lambda})$ to  $(\g,[-,-],(-)^{[p]})$.
\end{proof}

Rota-Baxter Lie algebras possess the desirable splitting property~\cite[Corollary~5.6]{BGN} and replication property~\cite[Theorem~1.1.17]{G}, which we subsequently extend to the restricted situation.

\begin{theorem}\label{thm:rpl-rrb}
Let $(\g,[-,-],(-)^{[p]},R)$ be a restricted Rota-Baxter Lie algebra of weight $\lambda$. The tuple
$$(\g,[-,-]_\lambda,(-)^{[p]_\lambda},\rhd_R)$$
is a restricted post-Lie algebra, whose subjacent restricted Lie algebra is the descendent restricted Lie algebra $(\g,[-,-]_R,(-)^{[p]_R})$. One calls $\g_\lambda\coloneqq(\g,[-,-]_\lambda,(-)^{[p]_\lambda}, \rhd_R)$ a {\bf splitting restricted post-Lie algebra.}  In particular, for the universal envelope $\U(\g_\lambda)$, we apply Eq.~\eqref{eq:p-map-uea} to obtain that
\begin{equation}\label{eq:p-map-split-uea}
x^{*_{\rhd_R} p}-x^{[p]_R} = x^p-x^{[p]_\lambda},\quad x\in \g_\lambda.
\end{equation}
\end{theorem}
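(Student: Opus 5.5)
The plan is to verify the axioms of Definition~\ref{defn:rpl} one by one for the tuple $(\g,[-,-]_\lambda,(-)^{[p]_\lambda},\rhd_R)$. Everything should reduce to Proposition~\ref{prop:split}, the restrictedness of $\g_\lambda$, Jacobson's identity \eqref{eq:rl-2}, and the defining condition \eqref{eq:rrbl}. Three ingredients are already available. First, $(\g,[-,-]_\lambda,\rhd_R)$ is a post-Lie algebra by Proposition~\ref{prop:split}. Second, $(\g,[-,-]_\lambda,(-)^{[p]_\lambda})$ is a restricted Lie algebra, as noted before Theorem~\ref{thm:rrb-graph}. Third, the subjacent bracket \eqref{eq:post-L-sub} is
$$x\rhd_R y-y\rhd_R x+[x,y]_\lambda=[R(x),y]+[x,R(y)]+\lambda[x,y]=[x,y]_R.$$
So it only remains to check \eqref{eq:rpl-3} and \eqref{eq:rpl-4}, and to identify the map $(-)^{\llb p\rrb}$ with $(-)^{[p]_R}$.

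For \eqref{eq:rpl-3}, I will compute both sides directly in terms of $\ad$. The left side is
$$y\rhd_R x^{[p]_\lambda}=\lambda^{p-1}[R(y),x^{[p]}]=-\lambda^{p-1}\ad_{x^{[p]}}(R(y))=-\lambda^{p-1}\ad_x^p(R(y)),$$
where the last step uses \eqref{eq:rl-2}. The right side involves $p-1$ brackets $[-,-]_\lambda$, each contributing a factor $\lambda$. It therefore equals
$$\lambda^{p-1}\ad_x^{p-1}([R(y),x])=-\lambda^{p-1}\ad_x^{p}(R(y)).$$
The two sides agree. Equivalently, this says that $\ad_{R(y)}$ is a restricted derivation of $\g_\lambda$, as already observed in the text.

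For \eqref{eq:rpl-4}, I first compute $x^{\llb p\rrb}$ from \eqref{eq:rpl-2}, now using the bracket $[-,-]_\lambda$ and the $p$-map $(-)^{[p]_\lambda}$. A composition $\alpha$ of length $r$ yields an iterated bracket $[x^{\rhd_R\alpha}]_\lambda$ with $r-1$ brackets, which equals $\lambda^{r-1}[x^{\rhd_R\alpha}]$. Comparing with \eqref{eq:rrbl-jac} then gives exactly $x^{\llb p\rrb}=x^{[p]_R}$. This bookkeeping of powers of $\lambda$ is the one point I expect to need care: one must check that the iterated products $x^{\rhd_R k}$ contain no $\lambda$-brackets, which holds because $\rhd_R=[R(-),-]$ is independent of $\lambda$. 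With this identification, \eqref{eq:rrbl} and \eqref{eq:rl-2} give
$$x^{\llb p\rrb}\rhd_R y=[R(x^{[p]_R}),y]=[R(x)^{[p]},y]=\ad_{R(x)}^p(y)=\underbrace{x\rhd_R(\cdots(x}_{p}\rhd_R y)\cdots).$$

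This establishes that the tuple is a restricted post-Lie algebra. Proposition~\ref{prop:sub-ad-restrict} then shows that its subjacent restricted Lie algebra is $(\g,\llb-,-\rrb,(-)^{\llb p\rrb})=(\g,[-,-]_R,(-)^{[p]_R})$. Finally, Eq.~\eqref{eq:p-map-split-uea} is Eq.~\eqref{eq:p-map-uea} specialized to this restricted post-Lie algebra, after substituting $x^{[p]}\mapsto x^{[p]_\lambda}$ and $x^{\llb p\rrb}=x^{[p]_R}$.
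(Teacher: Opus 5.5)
Your proposal is correct and follows the paper's proof essentially step by step. It obtains the post-Lie structure from Proposition~\ref{prop:split}, checks \eqref{eq:rpl-3} by a direct $\ad$-computation using \eqref{eq:rl-2}, identifies $x^{\llb p\rrb}=x^{[p]_R}$ by counting factors of $\lambda$, verifies \eqref{eq:rpl-4} via \eqref{eq:rrbl}, and finishes with Proposition~\ref{prop:sub-ad-restrict} and \eqref{eq:p-map-uea}; your explicit subjacent-bracket check and $\lambda$-bookkeeping only spell out steps the paper leaves implicit.
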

\begin{proof}
We have seen that $(\g,[-,-]_\lambda,(-)^{[p]_\lambda})$ is a restricted Lie algebra, and Proposition~\ref{prop:split} tells us that
$(\g,[-,-]_\lambda,\rhd_R)$ is a post-Lie algebra.
Next we check that Eq.~\eqref{eq:rpl-3} holds.
\begin{eqnarray*}
y\rhd_R x^{[p]_\lambda} &=& \lambda^{p-1}[R(y),x^{[p]}] \ \stackrel{\eqref{eq:rl-2}}{=}\
-\lambda^{p-1}[\underbrace{x,[x,[\cdots [x}_{p},R(y)]\cdots]]]\\
&=& [\underbrace{x,[x,[\cdots [x}_{p-1},y\rhd_R x]_\lambda\cdots]_\lambda]_\lambda]_\lambda.
\end{eqnarray*}
Also, the $p$-map defined by Eq.~\eqref{eq:rpl-2} for $(\g,[-,-]_\lambda,(-)^{[p]_\lambda},\rhd_R)$ is exactly $(-)^{[p]_R}$ in Eq.~\eqref{eq:rrbl-jac}.
Then we see that Eq.~\eqref{eq:rpl-4} holds as follows.
\begin{eqnarray*}
x^{[p]_R}\rhd_R y  &=& [R(x^{[p]_R}),y]
\ \stackrel{\eqref{eq:rrbl}}{=} \ [R(x)^{[p]},y] \\
&\stackrel{\eqref{eq:rl-2}}{=}& [\underbrace{R(x),[R(x),[\cdots [R(x)}_{p},y]\cdots]]]\\
&=& \underbrace{x \rhd_R (x \rhd_R \cdots (x}_{p}\rhd_R y)\cdots).
\end{eqnarray*}
Hence, $(\g,[-,-]_\lambda,(-)^{[p]_\lambda},\rhd_R)$ is a restricted post-Lie algebra.

According to Proposition~\ref{prop:sub-ad-restrict}, $(\g,[-,-]_\lambda,(-)^{[p]_\lambda},\rhd_R)$ has a subjacent restricted Lie algebra, which is exactly $(\g,[-,-]_R,(-)^{[p]_R})$ by Eq.~\eqref{eq:RB-L-des}.
\end{proof}

\begin{theorem}\label{thm:des-rrb}
If $(\g,[-,-],(-)^{[p]},R)$ is a restricted Rota-Baxter Lie algebra of weight $\lambda$, then $(\g_R,(-)^{[p]_R},R)$ is again a restricted Rota-Baxter Lie algebra of weight $\lambda$, and we call it the {\bf descendent restricted Rota-Baxter Lie algebra}. Hence, the Rota-Baxter operator $R$ is a restricted Rota-Baxter Lie algebra homomorphism from the descendent restricted Rota-Baxter Lie algebra $(\g_R,(-)^{[p]_R},R)$ to the original one $(\g,[-,-],(-)^{[p]},R)$.
\end{theorem}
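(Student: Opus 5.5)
The plan is to reduce everything to the fact that $R:\g_R\to\g$ is a homomorphism of restricted Lie algebras, and then push the defining formula \eqref{eq:rrbl-jac} for the descendent structure through $R$.

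\textbf{Step 1 (known facts).} By the classical replication property \cite[Theorem~1.1.17]{G}, $R$ is a Rota-Baxter operator of weight $\lambda$ on the descendent Lie algebra $\g_R=(\g,[-,-]_R)$. By Theorem~\ref{thm:rpl-rrb}, $(\g,[-,-]_R,(-)^{[p]_R})$ is a restricted Lie algebra. It therefore only remains to verify the restrictedness condition \eqref{eq:rrbl} for $R$ on $\g_R$.

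Write $(-)^{[p]_{R,R}}$ for the map obtained from \eqref{eq:rrbl-jac} with the data of $\g_R$ in place of those of $\g$. That is, the bracket is $[-,-]_R$, the $p$-map is $(-)^{[p]_R}$, and the induced product is
$$x\rhd_{R,R}y\coloneqq[R(x),y]_R .$$
The claim to prove is
$$R\big(x^{[p]_{R,R}}\big)=R(x)^{[p]_R},\qquad x\in\g .$$

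\textbf{Step 2 (apply $R$ term by term).} I will use two facts.
\begin{itemize}
\item $R([a,b]_R)=[R(a),R(b)]$, since $R:\g_R\to\g$ is a Lie algebra homomorphism.
\item $R(a^{[p]_R})=R(a)^{[p]}$, which is exactly the hypothesis \eqref{eq:rrbl}.
\end{itemize}
These give
$$R(a\rhd_{R,R}b)=R([R(a),b]_R)=[R^2(a),R(b)]=R(a)\rhd_R R(b).$$
Set $u\coloneqq R(x)$. An induction on $k$ then shows
$$R\big(x^{\rhd_{R,R}k}\big)=u^{\rhd_R k},\qquad k\geq1 .$$
Consequently, for every composition $\alpha$,
$$R\big([x^{\rhd_{R,R}\alpha}]_R\big)=[u^{\rhd_R\alpha}],$$
where the left-hand side uses the nested $[-,-]_R$ bracket and the right-hand side the nested $[-,-]$ bracket.

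\textbf{Step 3 (reassemble).} Applying $R$ to the defining expression of $x^{[p]_{R,R}}$, which is linear in its terms, gives
$$R\big(x^{[p]_{R,R}}\big)=\lambda^{p-1}u^{[p]}+\sum_{r=1}^{p-1}\dfrac{1}{r}\lambda^{r-1}\sum_{\alpha\vDash p\atop\ell(\alpha)=r}C_\alpha[u^{\rhd_R\alpha}].$$
By \eqref{eq:rrbl-jac}, the right-hand side is precisely $u^{[p]_R}=R(x)^{[p]_R}$, as required.

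The final assertion, that $R$ is a restricted Rota-Baxter Lie algebra homomorphism from $(\g_R,(-)^{[p]_R},R)$ to $(\g,[-,-],(-)^{[p]},R)$, then follows directly. The map $R$ is a Lie homomorphism $\g_R\to\g$, it intertwines the $p$-maps by \eqref{eq:rrbl}, and it trivially commutes with the operator $R$.

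\textbf{Main point needing care.} The step to check carefully is the bookkeeping in Step 2. The descendent structure involves two layers of $R$: one inside $\rhd_{R,R}$ and one inside $[-,-]_R$. After applying $R$ once, these layers must collapse exactly to $\rhd_R$ evaluated at $u=R(x)$. The identity $R([R(a),b]_R)=[R^2(a),R(b)]$ is what makes this collapse work. The leading $\lambda^{p-1}$ term needs the hypothesis \eqref{eq:rrbl} itself, rather than only the homomorphism property.

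As a cross-check, one could alternatively argue via Theorem~\ref{thm:rrb-graph} applied to $\g_R$, but the direct computation above seems shorter.
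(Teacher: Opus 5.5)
Your proposal is correct and follows the paper's proof essentially step for step. The paper likewise takes the Rota-Baxter property of $R$ on $\g_R$ and the restricted structure $(\g_R,(-)^{[p]_R})$ from Theorem~\ref{thm:rpl-rrb} as given, proves $R(x^{\splie_R k})=R(x)^{\rhd_R k}$ using $R([a,b]_R)=[R(a),R(b)]$, and then applies $R$ term by term to the defining formula \eqref{eq:rrbl-jac}, invoking \eqref{eq:rrbl} only for the leading $\lambda^{p-1}$ term.
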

\begin{proof}
The pair $(\g_R,R)$ is a descendent Rota-Baxter Lie algebra of weight $\lambda$, so with the corresponding splitting post-Lie product $\splie_R\coloneqq [R(-),-]_R$ by Proposition~\ref{prop:split}. Moreover, we have
\begin{eqnarray*}
R(x^{\splie_R k}) &=& R\Big([\underbrace{R(x),[R(x),[\cdots [R(x)}_{k-1},x]_R\cdots]_R]_R]_R\Big)\\
&=& [\underbrace{R^2(x),[R^2(x),[\cdots [R^2(x)}_{k-1},R(x)]\cdots]]]\\
&=& R(x)^{\rhd_R k}
\end{eqnarray*}
for any $k\geq 2$.

By Theorem~\ref{thm:rpl-rrb}, we know that $(\g,[-,-]_\lambda,(-)^{[p]_\lambda},\rhd_R)$
is a restricted post-Lie algebra and $(\g_R,(-)^{[p]_R})$ is its subjacent restricted Lie algebra. Also, we check that Eq.~\eqref{eq:rrbl} holds for $(\g_R,(-)^{[p]_R},R)$ as follows:
\begin{eqnarray*}
R(x^{([p]_R)_R}) &\stackrel{\eqref{eq:rrbl-jac}}{=}&
R\Big(\lambda^{p-1}x^{[p]_R}+\sum_{r=1}^{p-1}\dfrac{1}{r}\lambda^{r-1}\sum_{\alpha\vDash p\atop\ell(\alpha)=r}C_\alpha[x^{\splie_R \alpha}]_R\Big)\\
&=& \lambda^{p-1}R(x)^{[p]}+\sum_{r=1}^{p-1}\dfrac{1}{r}\lambda^{r-1}\sum_{\alpha\vDash p\atop\ell(\alpha)=r}C_\alpha [R(x)^{\rhd_R \alpha}]\\
&\stackrel{\eqref{eq:rrbl-jac}}{=}& R(x)^{[p]_R},
\end{eqnarray*}
where $[x^{\splie_R \alpha}]_R=[x^{\splie_R \alpha_1},[x^{\splie_R \alpha_2},[\cdots[x^{\splie_R \alpha_{r-1}},x^{\splie_R \alpha_r}]_R\cdots]_R]_R]_R$, the second equality uses Eq.~\eqref{eq:rrbl} for $(\g,(-)^{[p]},R)$ and the identity $R(x^{\splie_R k})=R(x)^{\rhd_R k}$ for $k\geq2$.
Hence, $(\g_R,(-)^{[p]_R},R)$ is a restricted Rota-Baxter Lie algebra of weight $\lambda$.
\end{proof}

Also, we want to characterize the restrictability of a Rota-Baxter Lie algebra in characteristic $p$, as a Rota-Baxter enhanced version of Jacobson's classical theorem.

\begin{theorem}\label{thm:restrictability}
	Let $(\g,[-,-],R)$ be a Rota-Baxter Lie algebra of weight $\lambda$ over $\bk$ and $\{e_j\}_{j\in J}$ be a basis of $\lieg$ such that there are $y_j\in \lieg$ with $(\on{ad} e_j)^p=\on{ad} y_j.$
\begin{enumerate}[(1)]
	\item\label{t1} 
There exists exactly one p-mapping $(-)^{[p]}:\lieg\ra \lieg$ such that $e_j^{[p]}=y_j,\ \forall j\in J.$
	\item\label{t2} Furthermore, if $R\left(e_j^{[p]_R}\right)=R(e_j)^{[p]},\forall j\in J,$ where  $(-)^{[p]_R}$ is defined by  Eq. \eqref{eq:rrbl-jac}, then the tuple $(\g,[-,-],(-)^{[p]}, R)$ is a restricted Rota-Baxter Lie algebra of weight $\lambda$.
\end{enumerate}
\end{theorem}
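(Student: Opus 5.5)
For part~\ref{t1} I will simply invoke Jacobson's classical restrictability theorem (see \cite[Theorem~2.2.3]{SF}). It uses only the Lie algebra structure of $\g$ and the hypothesis $(\ad e_j)^p=\ad y_j$, and it produces a unique $p$-map $(-)^{[p]}$ on $\g$ with $e_j^{[p]}=y_j$. The Rota-Baxter operator $R$ plays no role in this part.

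For part~\ref{t2}, the restricted Lie algebra $(\g,[-,-],(-)^{[p]})$ now exists. So the restricted semidirect product $(\g_\lambda\rtimes_{\ad}\g,(-)^{[p]_\rtimes})$ constructed before Theorem~\ref{thm:rrb-graph} is available, and I will reduce the claim to that theorem. Since $R$ is a Rota-Baxter operator of weight $\lambda$, the first half of the proof of Theorem~\ref{thm:rrb-graph} shows that the graph $G(\g,R)$ is a Lie subalgebra of $\g_\lambda\rtimes_{\ad}\g$. The second half of that proof uses only Lemma~\ref{lemma:adjoint_action} with $D=\ad_{R(x)}$, not the restrictedness of $R$. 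It therefore gives, for every $x\in\g$,
$$(x,R(x))^{[p]_\rtimes}=(x^{[p]_R},R(x)^{[p]}).$$
Consequently $R$ is restricted exactly when $G(\g,R)$ is closed under $(-)^{[p]_\rtimes}$. By the hypothesis and this identity, $(e_j,R(e_j))^{[p]_\rtimes}=(e_j^{[p]_R},R(e_j^{[p]_R}))\in G(\g,R)$ for every $j\in J$.

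It remains to show the standard fact that a Lie subalgebra $\lieh$ of a restricted Lie algebra $L$ is a $p$-subalgebra as soon as it contains the $p$-th powers of the elements of some basis $\{h_j\}$ of $\lieh$. Here $\lieh=G(\g,R)$, $L=\g_\lambda\rtimes_{\ad}\g$, and $h_j=(e_j,R(e_j))$. Take an arbitrary element $h=\sum_j\alpha_jh_j$, a finite sum. Apply Eq.~\eqref{eq:rl-3} repeatedly by induction on the number of summands, together with Eq.~\eqref{eq:rl-1}. This gives $h^{[p]}=\sum_j\alpha_j^ph_j^{[p]}+(\text{Lie words in the }h_j)$, because each $s_i(x,y)$ is a linear combination of iterated brackets of $x$ and $y$. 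The Lie words lie in $\lieh$ since $\lieh$ is a subalgebra, and each $h_j^{[p]}$ lies in $\lieh$ by the previous paragraph. Hence $G(\g,R)$ is a restricted Lie subalgebra, and Theorem~\ref{thm:rrb-graph} gives that $(\g,[-,-],(-)^{[p]},R)$ is a restricted Rota-Baxter Lie algebra.

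The main point needing care is the one flagged above: the identity $(x,R(x))^{[p]_\rtimes}=(x^{[p]_R},R(x)^{[p]})$ from the proof of Theorem~\ref{thm:rrb-graph} must be used for an arbitrary Rota-Baxter operator, before restrictedness is known. I expect no further obstacle, because the nonlinearity of $x\mapsto x^{[p]_R}$ is absorbed entirely by the nonlinearity of the $p$-map on the semidirect product.
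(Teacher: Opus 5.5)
Your argument is correct, but it takes a different route from the paper's.

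\textbf{The paper's route.} It proves part (2) by showing directly that the set of $x$ with $R(x^{[p]_R})=R(x)^{[p]}$ is closed under sums and scalar multiples.
\begin{enumerate}[(i)]
\item It works in $\U(\g_\lambda)$ with the Guin--Oudom product $*_{\rhd_R}$.
\item Using Eqs.~\eqref{eq:p-map-split-uea}, \eqref{eq:der-uea} and the hypothesis on the basis, it shows that $x\mapsto x^{*_{\rhd_R}p}-x^{[p]_R}$ is a $p$-semilinear map into the centralizer of $\g_R$.
\item By \cite[Prop.~2.1]{SF}, this gives that $(\g_R,(-)^{[p]_R})$ is already a restricted Lie algebra.
\item It then transports Jacobson's formula \eqref{eq:rl-3} along the Lie homomorphism $R:\g_R\to\g$, via $R(\tilde S_i(x,y))=S_i(R(x),R(y))$.
\end{enumerate}

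\textbf{Your route.} You observe that the identity $(x,R(x))^{[p]_\rtimes}=(x^{[p]_R},R(x)^{[p]})$ in the proof of Theorem~\ref{thm:rrb-graph} holds for every linear $R$. Its derivation uses only Lemma~\ref{lemma:adjoint_action} with the derivation $\ad_{R(x)}$. So the hypothesis says exactly that the basis $(e_j,R(e_j))$ of the Lie subalgebra $G(\g,R)$ has its $p$-th powers in $G(\g,R)$. The standard basis criterion for $p$-subalgebras then finishes, via Theorem~\ref{thm:rrb-graph}.

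\textbf{Comparison.} Your route is shorter and more conceptual. All the nonlinearity of $(-)^{[p]_R}$ is absorbed by Jacobson's formula in the single restricted Lie algebra $\g_\lambda\rtimes_{\ad}\g$, and no enveloping-algebra or post-Hopf machinery is needed. Its only external input is the $p$-map on the semidirect product from \cite[Theorem~2.5]{SF}, which the paper already uses. In exchange, the paper's route shows, under the basis hypothesis alone, that $(\g_R,(-)^{[p]_R})$ is restricted, and stays within the post-Lie framework. In your approach that fact is not lost: once $R$ is known to be restricted, it follows from Theorem~\ref{thm:rpl-rrb}.
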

\begin{proof}
	The first statement is due to Jacobson, which can be found in \cite[Theorem 2.3]{SF}. For the second statement, one just need to prove the following claim.
	
	\textbf{Claim:} If  $R\left(e^{[p]_R}\right)=R(e)^{[p]}$, $R\left(f^{[p]_R}\right)=R(f)^{[p]}$ for $e,f\in\g$,  then $R\left((e+f)^{[p]_R}\right)=R(e+f)^{[p]}.$
	
	In fact, first by assumption $R\left((\lambda_je_j)^{[p]_R}\right)=\lambda_j^p R\left(e_j^{[p]_R}\right)=R(\lambda_je_j)^{[p]}$ for every $\lambda_j\in \bk,\ j\in J$.
Then, by the induction on the number of summands, we see that this claim actually guarantees the general case for an arbitrary finite sum, namely
$$R\left(\left(\sum_{j\in J}\lambda_je_j\right)^{[p]_R}\right)
=R\left( \sum_{j\in J}\lambda_je_j\right)^{[p]},$$
which proves the theorem.

Now we focus on the claim. According to Theorem~\ref{thm:rpl-rrb},
for the universal envelope $\U(\g_\lambda)$ of the restricted Lie algebra $\g_\lambda=(\g,[-,-]_\lambda,(-)^{[p]_\lambda})$, we have
\begin{align*}
&y\rhd_R x^{[p]_\lambda} = [\underbrace{x,[x,[\cdots [x}_{p-1},y\rhd_R x]_\lambda\cdots]_\lambda]_\lambda]_\lambda \stackrel{\eqref{eq:der-uea}}{=} y\rhd_R x^p,\\
&x^{*_{\rhd_R} p}-x^{[p]_R} \stackrel{\eqref{eq:p-map-split-uea}}{=}  x^p-x^{[p]_\lambda}
\end{align*}
for any  $x,y\in \g_\lambda$. Define a map $f:\g_R\rightarrow U(\g_R), x\mapsto x^{*_{\rhd_R} p}-x^{[p]_R}. $ Then, given any $j\in J$ and $y\in\g_\lambda$,
\begin{align*}
f(e_j)*_{\rhd_R} y &-y*_{\rhd_R} f(e_j)\\
& \stackrel{\eqref{post-rbb-1}}{=}
(e_j^{*_{\rhd_R} p}-e_j^{[p]_R})y-y(e_j^{*_{\rhd_R} p}-e_j^{[p]_R})
+(e_j^{*_{\rhd_R} p}-e_j^{[p]_R})\rhd_R y - y\rhd_R (e_j^{*_{\rhd_R} p}-e_j^{[p]_R})\\
& \stackrel{\eqref{eq:p-map-split-uea}}{=}
(e_j^p-e_j^{[p]_\lambda})y-y(e_j^p-e_j^{[p]_\lambda})+(e_j^{*_{\rhd_R} p}-e_j^{[p]_R})\rhd_R y - y\rhd_R (e_j^p-e_j^{[p]_\lambda})\\
&=(e_j^{*_{\rhd_R} p}-e_j^{[p]_R})\rhd_R y\ \stackrel{\eqref{Post-2}}{=}\  \alpha^p_{\rhd_R,e_j}(y) - [R(e_j^{[p]_R}),y]\\
&= \alpha^p_{\rhd_R,e_j}(y) - [R(e_j)^{[p]},y]\ \stackrel{\eqref{eq:rl-2}}{=}\ 0,
\end{align*}
where the third equality is due to the fact that $x^p-x^{[p]_\lambda}
\in C_{\U(\g_\lambda)^-}(\g_\lambda)$ for any $x\in\g_\lambda$.
As a result, the descendent Lie algebra $\g_R$ is a Lie subalgebra of the restricted commutator Lie algebra $(U(\g_\lambda)_{\rhd_R}^-,(-)^{*_{\rhd_R} p})$, and
$f$ is a $p$-semilinear map from $\g_R$ to the centralizer $C_{U(\g_\lambda)_{\rhd_R}^-}(\g_R)$, so $(-)^{[p]_R}$ is a $p$-map on $\g_R$ by \cite[Proposition~2.1]{SF}. Namely, $(\g_R,(-)^{[p]_R})$ is a restricted Lie algebra.

Let $\ad_R$ be the adjoint action of $\g_R$ defined by $\ad_Rx(y)\coloneqq [x,y]_R$, and denote $\tilde{S}_i(x,y)\in\g_R$ to express the expansion $\ad_R^{p-1}(tx+y)(x)=\sum_{i=1}^{p-1} i\tilde{S}_i(x,y)t^{i-1}$.  The equality
	$$R\left(\ad_R^{p-1}(tx+y)(x)\right)=\ad^{p-1}\left(tR(x)+R(y)\right)(R(y))$$ implies that
	\[ R\left(\tilde{S}_i(x,y)\right) =S_i\left(R(x),R(y)\right), \quad i=1,\dots,p-1. \]
Using the fact that $(-)^{[p]_R}$ is a $p$-map on $\g_R$, we have
\begin{align*}
R\left( (e+f)^{[p]_R} \right) & \stackrel{\eqref{eq:rl-3}}{=}
R\left(e^{[p]_R}+f^{[p]_R} + \sum_{i=1}^{p-1} \tilde{S}_i(e,f)\right)\\
&=R(e)^{[p]}+R(f)^{[p]}+\sum_{i=1}^{p-1} {S}_i(R(e),R(f))\\
&=\left(R(e)+R(f)\right)^{[p]}=R(e+f)^{[p]},
\end{align*}
which proves the claim.
\end{proof}

\subsection{Two constructions of restricted Rota-Baxter Lie algebras}
Next we provide two natural constructions of restricted Rota-Baxter Lie algebras respectively from Rota-Baxter algebras of arbitrary weight and Rota-Baxter Lie algebras of weight $1$ in prime characteristic.

First we need to give the following combinatorial formula.
\begin{lemma}
Given any two nonnegative integers $j$ and $k$, we have
\begin{equation}\label{eq:binomial}
\sum_{i=0}^{k}(-1)^i{i+j\choose j}{j+k+1\choose i+j+1}=1.
\end{equation}
\end{lemma}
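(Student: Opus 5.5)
We plan to prove Eq.~\eqref{eq:binomial} by a generating-function computation: the factor ${i+j\choose j}$ becomes a coefficient of a power of $1+x$, and the remaining alternating binomial sum is then summed in closed form. First we reindex. Set $n\coloneqq j+k+1$ and $m\coloneqq i+j$, so $m$ runs from $j$ to $n-1$. The left-hand side of Eq.~\eqref{eq:binomial} becomes
\begin{equation*}
\sum_{m=j}^{n-1}(-1)^{m-j}{m\choose j}{n\choose m+1}.
\end{equation*}
Since ${m\choose j}=0$ for $0\le m<j$, we may extend the sum to all $0\le m\le n-1$ without changing its value.

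Next we write ${m\choose j}$ as the coefficient of $x^j$ in $(1+x)^m$. By linearity the sum equals $(-1)^j$ times the coefficient of $x^j$ in the polynomial
\begin{equation*}
F(x)\coloneqq\sum_{m=0}^{n-1}(-1)^{m}{n\choose m+1}(1+x)^m .
\end{equation*}
Putting $l=m+1$ and applying the binomial theorem, we get
\begin{equation*}
(1+x)F(x)=-\sum_{l=1}^{n}{n\choose l}\big(-(1+x)\big)^{l}=1-\big(1-(1+x)\big)^n=1-(-x)^n .
\end{equation*}
Hence $F(x)=\big(1-(-x)^n\big)/(1+x)$. This is a genuine polynomial, since $x=-1$ is a root of the numerator, and it can be expanded in $\bk[[x]]$ as $\big(1-(-x)^n\big)\sum_{t\ge0}(-x)^t$.

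Finally, since $j\le n-1<n$, the term $(-x)^n$ only contributes in degrees $\ge n$. So the coefficient of $x^j$ in $F(x)$ is the coefficient of $x^j$ in $(1+x)^{-1}$, namely $(-1)^j$. Multiplying by $(-1)^j$ gives $1$, which is Eq.~\eqref{eq:binomial}.

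The argument is a pure integer identity, so it is independent of the characteristic. The only step needing care is the bookkeeping in the middle: the sign shift when passing from $m$ to $l=m+1$, and the check that the $(-x)^n$ term cannot contribute in degree $j$, which uses $j<n$, i.e.\ $k\ge0$. As a sanity check, for $k=0$ both sides equal ${j\choose j}{j+1\choose j+1}=1$. If one prefers to avoid power series, an alternative is induction on $k$ using Pascal's rule ${j+k+2\choose i+j+1}={j+k+1\choose i+j+1}+{j+k+1\choose i+j}$, but the generating-function route above is shorter.
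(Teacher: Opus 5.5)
Your proof is correct, but it takes a different route from the paper.

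\textbf{The paper's route.} The paper argues by induction on $k$. It splits ${j+k+1\choose i+j+1}$ by Pascal's rule. The first resulting sum is the case $k-1$ of the identity, since its $i=k$ term vanishes. The second sum is rewritten using ${i+j\choose j}{j+k\choose i+j}={k\choose i}{j+k\choose j}$, so it becomes ${j+k\choose j}\sum_{i=0}^{k}(-1)^i{k\choose i}=0$ for $k>0$. This is essentially the alternative you mention in your last sentence.

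\textbf{Your route.} Your generating-function argument evaluates the sum in closed form instead of by recursion. After reindexing, all the sums with fixed $n=j+k+1$ are, up to the sign $(-1)^j$, the coefficients of one polynomial $F(x)=\sum_{t=0}^{n-1}(-x)^t$. So the identity for every $j\le n-1$ follows at once, with no induction.

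\textbf{Comparison.} The paper's induction is more elementary: it uses only Pascal's rule, trinomial revision, and the vanishing of alternating binomial sums. Your approach explains why the value is $1$ and adapts easily to variants of the sum.

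\textbf{One small correction.} You should expand in $\mathbb{Z}[[x]]$, or simply work in $\mathbb{Z}[x]$, rather than in $\bk[[x]]$. Over $\bk$ your computation only proves the identity modulo $p$, whereas the lemma is an identity of integers. The mod-$p$ version would suffice for its use in the proof of Theorem~\ref{thm:rba-rrbl}, but it is weaker than what is stated. You can also avoid power series entirely. A direct telescoping check gives $(1+x)\sum_{t=0}^{n-1}(-x)^t=1-(-x)^n=(1+x)F(x)$. Cancelling $1+x$ in the integral domain $\mathbb{Z}[x]$ then yields $F(x)=\sum_{t=0}^{n-1}(-x)^t$, whose coefficient of $x^j$ is $(-1)^j$ for $j\le n-1$.
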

\begin{proof}
The formula can be proven by induction on $k$. It is obvious when $k=0$. If $k>0$, then
\begin{align*}
\sum_{i=0}^{k}(-1)^i{i+j\choose j}{j+k+1\choose i+j+1}&=
\sum_{i=0}^{k}(-1)^i{i+j\choose j}\left({j+k\choose i+j+1}+{j+k\choose i+j}\right)\\
&=\sum_{i=0}^{k-1}(-1)^i{i+j\choose j}{j+k\choose i+j+1}+
\sum_{i=0}^{k}(-1)^i{i+j\choose j}{j+k\choose i+j}\\
&=1+ {j+k\choose j}\sum_{i=0}^{k}(-1)^i{k\choose i}=1,
\end{align*}
where the third equality uses the induction hypothesis and the identity ${i+j\choose j}{j+k\choose i+j}= {k\choose i}{j+k\choose j}$.
\end{proof}

Our first construction says that the commutator Rota-Baxter Lie algebra of a Rota-Baxter algebra of arbitrary weight in prime characteristic is restricted.
\begin{theorem}\label{thm:rba-rrbl}
Let $(A,P)$ be a Rota-Baxter algebra of weight $\lambda$ in characteristic $p$.
Then the triple $(A^-,(-)^p,P)$ is a restricted Rota-Baxter Lie algebra of weight $\lambda$.
\end{theorem}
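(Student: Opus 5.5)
Since $(A,P)$ is a Rota-Baxter algebra of weight $\lambda$, $P$ is also a Rota-Baxter operator of weight $\lambda$ on $A^-$, and $(A^-,(-)^p)$ is a restricted Lie algebra by Example~\ref{ex:ass-rla}. So it only remains to verify Eq.~\eqref{eq:rrbl}, namely $P(x^{[p]_P})=P(x)^p$ for all $x\in A$. The key idea is to express $x^{[p]_P}$ as a $p$-th power in the double algebra $A_P=(A,\star_P)$, and then use that $P:A_P\to A$ is an algebra homomorphism.

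\textbf{Step 1.} The double product satisfies $x\star_P y-y\star_P x=[P(x),y]+[x,P(y)]+\lambda[x,y]=[x,y]_P$. Hence the descendent Lie algebra $\g_P$ of $A^-$ is exactly $A_P^-$, which carries the restricted structure $(-)^{\star_P p}$. Since $P$ is an algebra homomorphism from $A_P$ to $A$, we have $P(x^{\star_P p})=P(x)^p$. It therefore suffices to prove
\begin{equation*}
x^{[p]_P}=x^{\star_P p},\qquad x\in A.
\end{equation*}

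\textbf{Step 2.} To prove this identity I would use the graph characterization of Theorem~\ref{thm:rrb-graph}. By the computation in its proof, $x^{[p]_P}$ is the first component of $(x,P(x))^{[p]_\rtimes}$ in $\g_\lambda\rtimes_{\ad}\g$ with $\g=A^-$. This semidirect product embeds into an associative algebra. Take $B=A\oplus A$ with product
\begin{equation*}
(a,b)(c,d)=(\lambda ac+bc+ad,\ bd),
\end{equation*}
which is the algebra $A_\lambda\rtimes A$ with $A$ acting on $A_\lambda=(A,\lambda\cdot)$ by two-sided multiplication. One checks that it is associative, and that its commutator bracket is exactly $[-,-]_\rtimes$. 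By Jacobson's result (Example~\ref{ex:ass-rla}), the $p$-map on $B^-$ is the $p$-th power. By uniqueness of $p$-maps determined on a basis via Eq.~\eqref{eq:rl-2} and \eqref{eq:rl-3}, it agrees with $(-)^{[p]_\rtimes}$: both are $p$-maps restricting to $y\mapsto y^p$ on the second factor and to $\lambda^{p-1}x^p$ on $A_\lambda$, and the formula defining $[p]_\rtimes$ is forced by Eq.~\eqref{eq:rl-3}.

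Alternatively, one can verify directly that $(x,P(x))^p=(x^{\star_P p},P(x)^p)$ in $B$. The map $x\mapsto(x,P(x))$ is an algebra homomorphism $A_P\to B$, because
\begin{equation*}
(x,P(x))(y,P(y))=(\lambda xy+P(x)y+xP(y),\ P(x)P(y))=(x\star_P y,\ P(x\star_P y)).
\end{equation*}
Hence $(x,P(x))^p=(x^{\star_P p},P(x)^p)$. Comparing first components with $(x,P(x))^{[p]_\rtimes}=(x^{[p]_P},P(x)^{p})$ gives $x^{[p]_P}=x^{\star_P p}$. This is the cleanest route, and I would present it.

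\textbf{Main obstacle.} The delicate point is identifying the $p$-map $(-)^{[p]_\rtimes}$ of \cite[Theorem~2.5]{SF} with the associative $p$-th power on $B$. Concretely, one must check that in $B$ one has $(a,0)^p=(\lambda^{p-1}a^p,0)$ and $(0,b)^p=(0,b^p)$, and that $\ad$ matches the $p$-power structure. Then the expansion of $((a,0)+(0,b))^p$ via Eq.~\eqref{eq:rl-3} reproduces the defining formula of $(-)^{[p]_\rtimes}$, with the $\lambda^{i-1}s_i$ terms arising from the rescaled bracket. If this identification is awkward, I would instead expand $x^{\star_P p}$ directly in $B$ using Lemma~\ref{lemma:adjoint_action} and the Jacobson formula, possibly using identity \eqref{eq:binomial} to match binomial coefficients. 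I expect this bookkeeping to be why that lemma appears right before the theorem.
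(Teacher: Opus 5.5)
Your proof is correct, and it takes a genuinely different route from the paper's. Both arguments reduce the theorem to the identity $x^{[p]_P}=x^{\star_P p}$.

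The paper proves this identity by direct computation. It first rewrites $x^{[p]_P}$ as $\sum_r\lambda^{r-1}\sum_{\ell(\alpha)=r}C_\alpha x^{\rhd_P\alpha}$, using $\U(A^-)$, the Grossman--Larson formulas \eqref{eq:glproduct} and \eqref{eq:gl-expand}, and the algebra map $\pi:\U(A^-)\to A$; the case $\lambda=0$ is treated separately before passing to $\widetilde P=\lambda^{-1}P$. It then expands $x^{\star_P p}$ and each $x^{\rhd_P\alpha}$ into monomials $P(x)^{j_1}xP(x)^{j_2}\cdots xP(x)^{j_{r+1}}$. After a change of variables, it checks with \eqref{eq:binomial} that every coefficient equals $1$.

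You instead combine two ingredients:
\begin{enumerate}[(i)]
\item the identity $(x,R(x))^{[p]_\rtimes}=(x^{[p]_R},R(x)^{[p]})$ from the proof of Theorem~\ref{thm:rrb-graph}, which holds for any linear $R$;
\item an associative model $B$ of $\g_\lambda\rtimes_{\ad}\g$, in which the graph of $P$ is the image of the algebra homomorphism $A_P\to B$, $x\mapsto(x,P(x))$.
\end{enumerate}
In your route all the combinatorics sits in Lemma~\ref{lemma:adjoint_action}. The theorem then reduces to the fact that an associative subalgebra is closed under $p$-th powers.

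The step you flag as the main obstacle is actually immediate. In $B^-$, $\ad_{(ta,b)}$ acts on $A\oplus 0$ as $t\lambda\,\ad_a+\ad_b$. So Jacobson's formula \eqref{eq:rl-3} in $B^-$ gives $s_i\big((a,0),(0,b)\big)=(\lambda^{i-1}s_i(a,b),0)$. Together with $(a,0)^p=(\lambda^{p-1}a^p,0)$ and $(0,b)^p=(0,b^p)$, this is exactly the defining formula of $(-)^{[p]_\rtimes}$. No uniqueness-on-a-basis argument is needed, and no fallback to \eqref{eq:binomial}.

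Your route is shorter, works uniformly in $\lambda$, and explains why the identity holds. For $\lambda\neq 0$, the map $(a,b)\mapsto(\lambda a+b,b)$ identifies $B$ with $A\times A$. The graph map then becomes the pair of homomorphisms $P+\lambda\,\id$ and $P$ from $A_P$ to $A$. The paper's computation, in exchange, does not depend on the graph characterization and produces an explicit monomial expansion of $x^{[p]_P}$.
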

\begin{proof}
According to Example~\ref{ex:ass-rla}, $(A^-,(-)^p)$ is a restricted Lie algebra. Also, $P$ is clearly a Rota-Baxter operator of weight $\lambda$ on $A^-$. In order to show that $(A^-,(-)^p,P)$ is restricted, we only need to verify that the map $(-)^{[p]_P}$ defined by Eq.~\eqref{eq:rrbl-jac} for $(A^-,(-)^p,P)$ is exactly $(-)^{\star_P p}$, where $\star_P$ is the double product on $A$ defined in Eq.~\eqref{eq:RBA-double}. Indeed,
 if this is true, then Eq.~\eqref{eq:rrbl} clearly holds as follows:
$$P(x^{[p]_P})=P(x^{\star_P p})=P(x)^p,\quad x\in A.$$

When $\lambda=0$, we have $x^{[p]_P}=x^{\rhd_P p}$.
Otherwise, if $\lambda\neq 0$, then $\widetilde P\coloneqq \lambda^{-1}P$ is a Rota-Baxter operator of weight $1$ on $A^-$, so $(A^-,\rhd_{\widetilde P})$ is a post-Lie algebra by Proposition~\ref{prop:split}.
By the universal property of $\U(A^-)$, there exists an algebra map $\pi:\U(A^-)\to A$ such that the following commutative diagram holds:
$$\xymatrix{A^-\ \ar@{^{(}->}[r]^{\!\!\!\!\!\!j}\ar@{=}[rd]_{\rm id} &   \ \U(A^-)
 \ar[d]^{\pi} \\
 & \ A}$$
Hence, we have
\begin{align*}
\nonumber
x^{[p]_P} &\stackrel{\eqref{eq:rrbl-jac}}{=} \lambda^{p-1}x^{p}+\sum_{r=1}^{p-1}\dfrac{1}{r}\lambda^{r-1}\sum_{\alpha\vDash p\atop\ell(\alpha)=r}C_\alpha[x^{\rhd_P \alpha}]
=\lambda^{p-1}\pi\Bigg(x^{p}+\sum_{r=1}^{p-1}\dfrac{1}{r}\sum_{\alpha\vDash p\atop\ell(\alpha)=r}C_\alpha[x^{\rhd_{\widetilde P} \alpha}]\Bigg)\\
&\stackrel{\eqref{eq:glproduct}}{=} \lambda^{p-1}\pi(x^{*_{\rhd_{\widetilde P}}p})\stackrel{\eqref{eq:gl-expand}}{=} \lambda^{p-1}\pi\bigg(\sum_{\alpha\vDash p}C_\alpha x^{\rhd_{\widetilde P} \alpha}\bigg)=\sum_{r=1}^p\lambda^{r-1}\sum_{\alpha\vDash p\atop\ell(\alpha)=r}C_\alpha x^{\rhd_P \alpha},
\end{align*}
which actually also holds even if $\lambda=0$.

Now we check that $x^{[p]_P}=x^{\star_P p}$ for any $x\in A$. First we have
\begin{align*}
x^{\star_P p}&\stackrel{\eqref{eq:RBA-double}}{=}  P(x)x^{\star_P (p-1)}+xP(x^{\star_P (p-1)}) + \lambda xx^{\star_P (p-1)}\\
&= P(x)x^{\star_P (p-1)}+xP(x)^{p-1} + \lambda xx^{\star_P (p-1)}\\
&= \cdots =\sum_{r=1}^{p}\lambda^{r-1}\sum_{j_1+\cdots+ j_{r+1}
=p-r\atop j_1,\dots,j_{r+1}\geq0}P(x)^{j_1}xP(x)^{j_2}\cdots xP(x)^{j_{r+1}}.
\end{align*}
On the other hand, since
\begin{align*}
x^{\rhd_P k}&=[\underbrace{P(x),[P(x),[\cdots [P(x)}_{k-1},x]\cdots ]]]
= \sum_{i=0}^{k-1}{k-1\choose i}(-1)^{k-1-i} P(x)^i x P(x)^{k-1-i},\quad k\geq1,
\end{align*}
we have
\begin{align*}
\sum_{\alpha\vDash p\atop\ell(\alpha)=r}C_\alpha x^{\rhd_P \alpha}
& \stackrel{\eqref{eq:set-part}}{=} \sum_{\alpha_1+\cdots+\alpha_r= p\atop\alpha_1,\dots,\alpha_r\geq1}{\alpha_1-1\choose \alpha_1-1}{\alpha_1+\alpha_2-1\choose \alpha_2-1}\cdots{p-1\choose \alpha_r-1}\sum_{i_1=0}^{\alpha_1-1}\cdots \sum_{i_r=0}^{\alpha_r-1}
{\alpha_1-1\choose i_1}\cdots {\alpha_r-1\choose i_r}\\
 &\quad (-1)^{p-r-i_1-\cdots-i_r}
P(x)^{i_1} x P(x)^{\alpha_1-1-i_1+i_2}x\cdots xP(x)^{\alpha_{r-1}-1-i_{r-1}+i_r} x P(x)^{\alpha_r-1-i_r}\\
&= \sum_{\alpha_1+\cdots+\alpha_r= p\atop\alpha_1,\dots,\alpha_r\geq1}{\alpha_1-1\choose \alpha_1-1}{\alpha_1+\alpha_2-1\choose \alpha_2-1}\cdots{p-1\choose \alpha_r-1}\sum_{j_1+\cdots+j_{r+1}=p-r\atop \alpha_1+\cdots+\alpha_{k-1}-k+1\leq j_1+\cdots+j_k \leq \alpha_1+\cdots+\alpha_k-k,\ k=1,\dots,r}\\
 &\quad
{\alpha_1-1\choose j_1}{\alpha_2-1\choose j_1+j_2-\alpha_1+1}\cdots {\alpha_r-1\choose j_1+\cdots +j_r-\alpha_1-\cdots-\alpha_{r-1}+r-1}\\
 &\quad (-1)^{p+\tfrac{r(r+1)}{2}+\sum\limits_{k=1}^r(r-k+1)j_k+(r-k)\alpha_k}
P(x)^{j_1} x P(x)^{j_2}x\cdots xP(x)^{j_r} x P(x)^{j_{r+1}}\\
&= \sum_{j_1+\cdots+j_{r+1}=p-r\atop j_1,\dots j_{r+1} \geq 0}
\Bigg(\sum_{\alpha_1+\cdots+\alpha_r= p\atop
j_1+\cdots+j_k+k\leq \alpha_1+\dots+\alpha_k\leq j_1+\cdots+j_{k+1}+k,\
k=1,\dots, r}
{\alpha_1-1\choose \alpha_1-1}{\alpha_1+\alpha_2-1\choose \alpha_2-1}\cdots{p-1\choose \alpha_r-1}\\
&\quad
{\alpha_1-1\choose j_1}{\alpha_2-1\choose j_1+j_2-\alpha_1+1}\cdots {\alpha_r-1\choose j_1+\cdots + j_r-\alpha_1-\cdots-\alpha_{r-1}+r-1}\\
 &\quad (-1)^{\sum\limits_{k=1}^r(r-k+1)(\alpha_k-j_k-1)}\Bigg)
P(x)^{j_1} x P(x)^{j_2}x\cdots xP(x)^{j_r} x P(x)^{j_{r+1}}
\end{align*}
for $r=1,\dots,p$, where we denote $j_1=i_1$, $j_k=\alpha_{k-1}-1-i_{k-1}+i_k$ for $k=2,\dots,r$ and $j_{r+1}=\alpha_r-1-i_r$ in the second equality, then conversely $i_k=j_1+\cdots+j_k-\alpha_1-\cdots-\alpha_{k-1}+k-1$ for $k=1,\dots,r$.

Hence, it remains to check that the combinatorial coefficient
\begin{align*}
&\sum_{\alpha_1+\cdots+\alpha_r= p\atop
j_1+\cdots+j_k+k\leq \alpha_1+\dots+\alpha_k\leq j_1+\cdots+j_{k+1}+k,\ k=1,\dots,r}
{\alpha_1-1\choose \alpha_1-1}{\alpha_1+\alpha_2-1\choose \alpha_2-1}\cdots{p-1\choose \alpha_r-1}{\alpha_1-1\choose j_1}{\alpha_2-1\choose j_1+j_2-\alpha_1+1}\\
&\qquad \cdots {\alpha_r-1\choose j_1+\cdots +j_r-\alpha_1-\cdots-\alpha_{r-1}+r-1}(-1)^{\sum_{k=1}^r(r-k+1)(\alpha_k-j_k-1)}
=1
\end{align*}
for any fixed $j_1,\dots, j_{r+1} \geq 0$ such that $j_1+\cdots+j_{r+1}=p-r$, and then
$$x^{[p]_P}=\sum_{r=1}^p\lambda^{r-1}\sum_{\alpha\vDash p\atop\ell(\alpha)=r}C_\alpha x^{\rhd_P \alpha}
=\sum_{r=1}^{p}\lambda^{r-1}\sum_{j_1+\cdots+ j_{r+1}
=p-r\atop j_1,\dots,j_{r+1}\geq0}P(x)^{j_1}xP(x)^{j_2}\cdots xP(x)^{j_{r+1}}=x^{\star_P p}.$$

Let $\beta_k=\sum_{i=1}^k (\alpha_i-j_i-1)$ for $k=1,\dots,r$.
Then, $\beta_k$'s can substitute for $\alpha_k$'s in the above coefficient, with the equivalent constraints that $0\leq \beta_k\leq j_{k+1}$ for $k=1,\dots,r-1$ and $\beta_r=j_{r+1}$. Namely, the coefficient has the form
\begin{align*}
&\sum_{0\leq \beta_k\leq j_{k+1},\ k=1,\dots,r-1\atop
\beta_r=j_{r+1}}
{\beta_2+j_1+j_2+1\choose \beta_2-\beta_1+j_2}\cdots
{\beta_{r-1}+j_1+\cdots+j_{r-1}+r-2\choose \beta_{r-1}-\beta_{r-2}+j_{r-1}}{p-1\choose \beta_r-\beta_{r-1}+j_r}\\
&\qquad {\beta_1+j_1\choose j_1}{\beta_2-\beta_1+j_2\choose j_2-\beta_1}\cdots {\beta_r-\beta_{r-1}+j_r\choose j_r-\beta_{r-1}}(-1)^{\beta_1+\cdots+\beta_r}\\
&=\sum_{0\leq \beta_k\leq j_{k+1},\ k=1,\dots,r-1\atop
\beta_r=j_{r+1}}
{\beta_1+j_1\choose j_1}{\beta_2+j_1+j_2+1\choose j_1+j_2+1}\cdots
{\beta_{r-1}+j_1+\cdots+j_{r-1}+r-2\choose j_1+\cdots+j_{r-1}+r-2}{p-1\choose j_1+\cdots+j_r+r-1}\\
&\qquad {j_1+j_2+1 \choose \beta_1+j_1+1}
{j_1+j_2+j_3+2 \choose \beta_2+j_1+j_2+2}
\cdots {j_1+\cdots+j_r+r-1\choose \beta_{r-1}+j_1+\cdots+j_{r-1}+r-1}(-1)^{\beta_1+\cdots+\beta_r}\\
&=\sum_{\beta_1=0}^{j_2}(-1)^{\beta_1}{\beta_1+j_1\choose j_1}{j_1+j_2+1 \choose \beta_1+j_1+1}
\sum_{\beta_2=0}^{j_3}(-1)^{\beta_2}{\beta_2+j_1+j_2+1\choose j_1+j_2+1}{j_1+j_2+j_3+2 \choose \beta_2+j_1+j_2+2}\\
&\quad \cdots \sum_{\beta_{r-1}=0}^{j_r}(-1)^{\beta_{r-1}}
{\beta_{r-1}+j_1+\cdots+j_{r-1}+r-2\choose j_1+\cdots+j_{r-1}+r-2}{j_1+\cdots+j_r+r-1\choose \beta_{r-1}+j_1+\cdots+j_{r-1}+r-1}\stackrel{\eqref{eq:binomial}}{=}1,
\end{align*}
where the first equality uses the identities
\begin{align*}
{\beta_k+j_1+\cdots+j_k+k-1\choose \beta_k-\beta_{k-1}+j_k}&{\beta_k-\beta_{k-1}+j_k\choose j_k-\beta_{k-1}}\\
&={\beta_k+j_1+\cdots+j_k+k-1\choose j_1+\cdots+j_k+k-1}{j_1+\cdots+j_k+k-1\choose \beta_{k-1}+j_1+\cdots +j_{k-1}+k-1}
\end{align*}
for any $k=2,\dots,r$, and the second equality is due to the fact that ${p-1\choose j_1+\cdots+j_r+r-1}=(-1)^{j_1+\cdots+j_r+r-1}=(-1)^{p-1-j_{r+1}}=(-1)^{\beta_r}$.
So the proof is finished.
\end{proof}

\begin{coro}
Let $(\g,R)$ be a Rota-Baxter Lie algebra of weight $\lambda$ in characteristic $p$. Let $(\U_{\rm RB}(\g),R)$ be the universal enveloping Rota-Baxter associative algebra for $(\g,R)$. Then the triple $(\U_{\rm RB}(\g)^-,(-)^p,R)$ is a restricted Rota-Baxter Lie algebra of weight $\lambda$.
\end{coro}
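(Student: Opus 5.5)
This corollary is a direct specialization of Theorem~\ref{thm:rba-rrbl}. The plan is to recognize $(\U_{\rm RB}(\g),R)$ as a Rota-Baxter associative algebra of weight $\lambda$ over $\bk$ of characteristic $p$, and then apply that theorem verbatim with $A=\U_{\rm RB}(\g)$ and $P=R$.

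First, recall what the universal enveloping Rota-Baxter associative algebra is. It is a Rota-Baxter algebra $(\U_{\rm RB}(\g),R)$ of weight $\lambda$ together with a Rota-Baxter Lie algebra homomorphism $\g\to \U_{\rm RB}(\g)^-$. It is universal among Rota-Baxter Lie homomorphisms from $(\g,R)$ into commutator Rota-Baxter Lie algebras of weight-$\lambda$ Rota-Baxter associative algebras. The operator on $\U_{\rm RB}(\g)$, still written $R$, satisfies the associative Rota-Baxter identity of weight $\lambda$ by construction. In particular $(\U_{\rm RB}(\g),R)$ is an object to which Theorem~\ref{thm:rba-rrbl} applies.

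Second, I will invoke Theorem~\ref{thm:rba-rrbl}. By Example~\ref{ex:ass-rla}, $(\U_{\rm RB}(\g)^-,(-)^p)$ is a restricted Lie algebra. The operator $R$ is a Rota-Baxter operator of weight $\lambda$ on the commutator Lie algebra. The restrictedness condition \eqref{eq:rrbl} follows from the identity $x^{[p]_R}=x^{\star_R p}$ established in the proof of that theorem. This gives
$$R(x^{[p]_R})=R(x^{\star_R p})=R(x)^p,$$
since $R$ is an algebra homomorphism from the double algebra $(\U_{\rm RB}(\g),\star_R)$ to $\U_{\rm RB}(\g)$.

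I do not expect a real obstacle. The only point needing care is making sure the operator on $\U_{\rm RB}(\g)$ genuinely satisfies the weight-$\lambda$ associative Rota-Baxter identity, not merely its Lie version. This holds because $\U_{\rm RB}(\g)$ is defined as a quotient of a free Rota-Baxter algebra of weight $\lambda$ by a Rota-Baxter ideal. The heavy combinatorics (Eq.~\eqref{eq:binomial} and the coefficient identity) are already absorbed into Theorem~\ref{thm:rba-rrbl}, so they need not be redone.
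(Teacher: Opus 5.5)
Your proposal is correct and is the same argument the paper intends: the corollary is stated without separate proof as an immediate consequence of Theorem~\ref{thm:rba-rrbl}, applied to the Rota-Baxter associative algebra $A=\U_{\rm RB}(\g)$ of weight $\lambda$ with $P=R$. Citing the statement of that theorem already suffices, so you do not need to re-invoke the identity $x^{[p]_R}=x^{\star_R p}$ from its proof.
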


In \cite[Proposition~4.1]{Do}, Dokas proved that a Rota-Baxter algebra $(A,P)$ of weight 0 in characteristic $p$ induces a restricted pre-Lie algebra structure $(A,\rhd_P,(-)^{\star_p})$ in the sense of Dzhumadil'daev~\cite{Dz}, since $(-)^{\star_p}=(-)^{\rhd_P p}$. Now according to Theorem~\ref{thm:rba-rrbl} and Theorem~\ref{thm:rpl-rrb}, we have the following general result for arbitrary weight.
\begin{coro}\label{coro:rb-rpla}
Let $(A,P)$ be a Rota-Baxter algebra of weight $\lambda$ in characteristic $p$. Then the tuple $(A,\lambda[-,-],\lambda^{p-1}(-)^p,\rhd_P)$ is a restricted post-Lie algebra.
\end{coro}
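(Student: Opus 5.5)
The corollary follows by chaining Theorem~\ref{thm:rba-rrbl} with Theorem~\ref{thm:rpl-rrb}. First, I apply Theorem~\ref{thm:rba-rrbl} to the Rota-Baxter algebra $(A,P)$ of weight $\lambda$. This gives that $(A^-,(-)^p,P)$ is a restricted Rota-Baxter Lie algebra of weight $\lambda$. Here the restricted Lie structure is the commutator Lie algebra $A^-$ with the $p$-th power map from Example~\ref{ex:ass-rla}, and the Rota-Baxter identity on $A^-$ follows from the associative one by antisymmetrizing.

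Next, I apply Theorem~\ref{thm:rpl-rrb} to $(\g,[-,-],(-)^{[p]},R)=(A^-,[-,-],(-)^p,P)$. It yields the splitting restricted post-Lie algebra $(\g,[-,-]_\lambda,(-)^{[p]_\lambda},\rhd_R)$. I then unwind each ingredient in this concrete case:
\begin{itemize}
\item $[x,y]_\lambda=\lambda[x,y]=\lambda(xy-yx)$, which is $\lambda[-,-]$;
\item $x^{[p]_\lambda}=\lambda^{p-1}x^{[p]}=\lambda^{p-1}x^p$;
\item $x\rhd_P y=[P(x),y]=P(x)y-yP(x)$, which is exactly the product $\rhd_P$ in the statement.
\end{itemize}
Hence the tuple is precisely $(A,\lambda[-,-],\lambda^{p-1}(-)^p,\rhd_P)$, and it is a restricted post-Lie algebra.

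There is no substantive obstacle, since all the work is in Theorem~\ref{thm:rba-rrbl}, namely the identification $x^{[p]_P}=x^{\star_P p}$. The only point needing care is the degenerate case $\lambda=0$. There the Lie bracket and $p$-map vanish, and the statement reduces to the restricted pre-Lie situation of the remark after Definition~\ref{defn:rpl}. This case is still covered, because Theorem~\ref{thm:rpl-rrb} is stated for arbitrary weight and its proof never divides by $\lambda$. As a byproduct, the subjacent restricted Lie algebra is $(A,[-,-]_P,(-)^{\star_P p})$, the commutator algebra of the double $A_P$ with its $p$-th power map.
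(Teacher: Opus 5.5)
Your proposal is correct and is exactly the paper's argument: the corollary comes from feeding the restricted Rota-Baxter Lie algebra $(A^-,(-)^p,P)$ of Theorem~\ref{thm:rba-rrbl} into the splitting construction of Theorem~\ref{thm:rpl-rrb}, then reading off $[-,-]_\lambda=\lambda[-,-]$, $(-)^{[p]_\lambda}=\lambda^{p-1}(-)^p$ and $\rhd_P=[P(-),-]$. Your observations are also accurate: the $\lambda=0$ case involves no division, and the subjacent restricted Lie algebra is the commutator algebra of the double $A_P$ with its $\star_P$-power map.
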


\begin{remark}
Loday and Ronco defined tridendriform algebras in their study \cite{LR} of polytopes and Koszul duality. Recall that a {\bf tridendriform algebra} $(T,\succ,\prec,\cdot)$ is a $\bk$-vector space endowed with three binary operations $\succ$, $\prec$ and $\cdot$ such that the following relations hold:
\begin{align*}
(x\star y)\succ z &= x\succ(y\succ z),\\
(x\succ y)\prec z   &=x\succ (y\prec z),\\
(x\prec y)\prec z   &=x\prec (y\star z),\\
x\succ(y\cdot z) &= (x\succ y)\cdot z,\\
x\cdot(y\succ z) &= (x\prec y)\cdot z,\\
(x\cdot y)\prec z &= x\cdot (y\prec z),\\
(x\cdot y)\cdot z &= x\cdot (y\cdot z)
\end{align*}
for all elements $x,y,z \in T$, where $x\star y \coloneqq x\succ y+x\prec y+x\cdot y$ is an associative product correspondingly.

According to \cite[Proposition~5.13]{BGN}, for any tridendriform algebra $(T,\succ,\prec,\cdot)$, if one defines the products
\begin{equation}\label{eq:tri-pla}
[a,b]_T\coloneqq a\cdot b-b\cdot a,\quad a\rhd_T b\coloneqq a\succ b-b\prec a,\quad a,b\in T,
\end{equation}
then $(T,[-,-]_T,\rhd_T)$ is a post-Lie algebra.

On the other hand, a Rota-Baxter algebra $(A,P)$ of weight $\lambda$ provides a tridendriform algebra $(A,\succ_P,\prec_P,\cdot_\lambda)$~(see \cite{Eb}), where
$$x\succ_P y\coloneqq P(x)y,\quad x\prec_P y\coloneqq xP(y),\quad x\cdot_\lambda y\coloneqq \lambda xy,\quad x,y\in A.$$
The corresponding post-Lie algebra structure defined by Eq.~\eqref{eq:tri-pla} is exactly the splitting post-Lie algebra $(A,\lambda[-,-],\rhd_P)$ of the commutator Rota-Baxter Lie algebra $(A^-,P)$. By Corollary~\ref{coro:rb-rpla}, we further know that $(A,\lambda[-,-],\lambda^{p-1}(-)^p,\rhd_P)$ is a restricted post-Lie algebra.

Consequently, we expect that a tridendriform algebra $(T,\succ,\prec,\cdot)$ in characteristic $p$ induces a restricted post-Lie algebra $(T,[-,-]_T,(-)^p,\rhd_T)$
in general.
\end{remark}

In order to give the second construction of restricted Rota-Baxter Lie algebras, we recall the following prominent result.
According to Goncharov's work in \cite{Go}, a Rota-Baxter operator $R$ of weight $1$ on $\g$ can be extended onto the whole universal envelope $\U(\g)$ properly by letting
\begin{equation}\label{eq:rb-uea}
R(xy)=R(x)R(y)-R([R(x),y]),\quad x\in \g,\,y\in \U(\g).
\end{equation}
It leads to the notion of a {\it cocommutative} {\bf Rota-Baxter Hopf algebra} consisting of a cocommutative Hopf algebra $H=(H,\cdot,\Delta,\vep,S)$ and a coalgebra endomorphism $R$ of $H$ such that
\begin{equation}\label{eq:rb-hopf}
R(x)R(y)=R(x_1R(x_2)yS(R(x_3))),\quad x,y\in H.
\end{equation}
Moreover, there is a {\bf descendent Hopf algebra} $H_R=(H,*_R,\Delta,\vep,S_R)$ defined by
\begin{equation*}
x*_R y=x_1R(x_2)yS(R(x_3)),\quad S_R(x)=S(x_1R(x_2))R(x_3),\quad x,y\in H.
\end{equation*}
Therefore, $ R(x*_R y)=R(x)R(y)$ for any $x,y\in H$.

It is pointed out in \cite[Theorem~3.6]{LST} that the Rota-Baxter Hopf algebra $(\U(\g),R)$ induces a post-Hopf algebra $(\U(\g),\rhd_R)$, and the descendent Hopf algebra $\U(\g)_R$ is the corresponding subjacent Hopf algebra $\U(\g)_{\rhd_R}$, namely $*_R=*_{\rhd_R}$ and $S_R=S_{\rhd_R}$.

\begin{theorem}\label{thm:rrbl}
Given a Rota-Baxter Lie algebra $(\g,[-,-],R)$ of weight $1$ over a field $\bk$ of characteristic $p$, let $\g^{(p)}$ be the primitive Lie subalgebra of $\U(\g)^-$ as in Example~\ref{ex:uea-rla}. Then $(\g^{(p)},[-,-],(-)^p,R)$ is a restricted Rota-Baxter Lie algebra of weight $1$, where we denote
the restriction $R|_{\g^{(p)}}$ of the Rota-Baxter operator $R$ of $\U(\g)$ still as $R$ without confusion.
\end{theorem}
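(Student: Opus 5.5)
We first show that $R$ restricts to a Rota-Baxter operator on $\g^{(p)}$, and then verify Eq.~\eqref{eq:rrbl} using the post-Hopf structure on $\U(\g)$.

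\textbf{Step 1: $R$ preserves $\g^{(p)}$ and is Rota-Baxter there.} By Goncharov's extension \eqref{eq:rb-uea}, $R$ is a coalgebra endomorphism of $(\U(\g),\Delta_\shuffle)$. Hence it maps primitive elements to primitive elements, so $R(\g^{(p)})\subseteq\g^{(p)}$. For primitive $x,y$ we use $\Delta(x)=x\otimes1+1\otimes x$, $R(1)=1$ and $S(R(x))=-R(x)$. Substituting into \eqref{eq:rb-hopf} gives
$$R(x)R(y)=R\big(xy+R(x)y-yR(x)\big)=R(xy)+R([R(x),y]).$$
Exchanging $x$ and $y$ and subtracting yields the weight-$1$ Lie identity $[R(x),R(y)]=R([R(x),y])+R([x,R(y)])+R([x,y])$ on $\g^{(p)}$.

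\textbf{Step 2: identify $(-)^{[p]_R}$ with the $*_R$-power.} By \cite[Theorem~3.6]{LST}, $(\U(\g),\rhd_R)$ is a post-Hopf algebra with $*_R=*_{\rhd_R}$. On primitive elements we have $x\rhd_R y=R(x)yS(R(1))\cdot\ldots$; more precisely, $x*_R y=xy+R(x)y-yR(x)$, so $x\rhd_R y=[R(x),y]$. Thus on $\g^{(p)}$ the post-Lie product coming from $\U(\g)$ coincides with the splitting product $\rhd_R=[R(-),-]$ of Proposition~\ref{prop:split} with $\lambda=1$. Since $(\g^{(p)},[-,-],\rhd_R)$ is a post-Lie subalgebra, Eq.~\eqref{eq:glproduct} applies in $\U(\g)$ with $x\in\g^{(p)}$:
$$x^{*_R p}=x^p+\sum_{r=1}^{p-1}\frac1r\sum_{\alpha\vDash p,\ \ell(\alpha)=r}C_\alpha[x^{\rhd_R\alpha}]=x^{[p]_R},$$
where the last equality is exactly \eqref{eq:rrbl-jac} with $\lambda=1$ and $x^{[p]}=x^p$.

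\textbf{Step 3: conclude.} Using the multiplicativity $R(x*_R y)=R(x)R(y)$ iteratively, we get $R(x^{[p]_R})=R(x^{*_R p})=R(x)^p=R(x)^{[p]}$, which is Eq.~\eqref{eq:rrbl}.

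The main obstacle is making Step 2 rigorous. One must check that the post-Hopf product $\rhd_R$ from \cite{LST} restricts on primitives to $[R(-),-]$. One must also check that Eq.~\eqref{eq:glproduct}, stated for the enveloping algebra of a post-Lie algebra, applies verbatim to the post-Lie algebra $\g$ and to $x\in\g^{(p)}$ inside $\U(\g)$. Both points follow from \cite{LST} together with the computation in Step 1 restricted to primitive $x$.
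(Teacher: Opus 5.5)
Your proposal is correct and follows the paper's proof. Like the paper, you restrict $R$ to $\g^{(p)}$, identify $(-)^{[p]_R}$ with the $p$-th power under $*_R=*_{\rhd_R}$ via Eq.~\eqref{eq:glproduct}, and conclude with $R(x^{*_R p})=R(x)^p$. The only difference is that you derive two facts directly from Eq.~\eqref{eq:rb-hopf}: the Rota-Baxter property on $\g^{(p)}$, and the identity $x\rhd_R y=[R(x),y]$ for primitive $x$. The paper instead cites \cite[Theorem~3.2]{LST} for the first and leaves the second implicit.
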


\begin{proof}
First according to \cite[Theorem~3.2]{LST},
the restriction $R|_{\g^{(p)}}$ is a Rota-Baxter operator of weight $1$ on the primitive Lie subalgebra $\g^{(p)}$ of the Rota-Baxter Hopf algebra $(\U(\g),R)$.

On the other hand, $(\g^{(p)},[-,-],(-)^p)$ is a restricted Lie algebra in Example~\ref{ex:uea-rla}, namely the $p$-map $(-)^{[p]}=(-)^p$. So we just have $(-)^{[p]_R}=(-)^{*_{\rhd_R} p}=(-)^{*_R p}$ by Eqs.~\eqref{eq:glproduct} and \eqref{eq:rrbl-jac}, then
$$R(x^{[p]_R})\ =\ R(x^{*_R p}) \ \stackrel{\eqref{eq:rb-hopf}}{=}\ R(x)^p,\quad x\in\g^{(p)}.$$
Hence, $(\g^{(p)},[-,-],(-)^p,R)$ is a restricted Rota-Baxter Lie algebra of weight $1$.
\end{proof}

\begin{theorem}\label{thm:rrbl-uea}
Given a restricted Rota-Baxter Lie algebra $(\g,[-,-],(-)^{[p]},R)$ of weight $1$, the Rota-Baxter operator on $\U(\g)$ can descend to a Rota-Baxter operator on the restricted enveloping algebra $\uu(\g)$.
Furthermore, if $R$ is idempotent and $\bk$ is not of characteristic $2$, then such a Rota-Baxter operator on $\uu(\g)$ is a Hopf algebra endomorphism of $\uu(\g)$.
\end{theorem}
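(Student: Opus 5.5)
We need to show that the extended operator $R$ on $\U(\g)$ from \eqref{eq:rb-uea} maps the ideal $I=(x^p-x^{[p]}\,|\,x\in\g)$ into itself. Then $R$ induces a linear map $\bar R$ on $\uu(\g)=\U(\g)/I$, and the identity \eqref{eq:rb-hopf} descends automatically. We also need the induced weight-one Rota-Baxter identity on $\uu(\g)$ to be well defined; this holds once $R(I)\subseteq I$, because both sides of the identity depend only on the classes of the arguments.

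\textbf{Step 1: the central elements.} Put $z_x\coloneqq x^p-x^{[p]}$. These are primitive and central in $\U(\g)$. By Theorem~\ref{thm:rpl-rrb} (with $\lambda=1$) and \eqref{eq:p-map-split-uea}, $z_x=x^{*_R p}-x^{[p]_R}$, where $*_R=*_{\rhd_R}$ is the descendent product. Since $R$ is a coalgebra map satisfying $R(a*_R b)=R(a)R(b)$, we get
\[
R(z_x)=R(x)^p-R(x^{[p]_R})=R(x)^p-R(x)^{[p]}=z_{R(x)}\in I,
\]
using the restrictedness \eqref{eq:rrbl}.

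\textbf{Step 2: the whole ideal.} Because $z_x$ is central, $I=z_x\U(\g)$ summed over $x$. I would instead write $I$ as spanned by elements $z_x *_R y$. Since $z_x$ is primitive, $z_x*_R y=z_x y+z_x\rhd_R y$. Moreover $z_x\rhd_R y=(\alpha^p_{\rhd_R,x}-\alpha_{\rhd_R,x^{[p]_R}})(y)=0$ on generators, by \eqref{eq:rpl-4} and \eqref{Post-2}; this extends to all $y$ because $\alpha_{\rhd}$ of a primitive element acts as a derivation by \eqref{Post-1}. Hence $z_x*_R y=z_x y$, so $I$ is also the ideal generated by the $z_x$ for $*_R$. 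Then
\[
R(z_x y)=R(z_x*_R y)=R(z_x)R(y)=z_{R(x)}R(y)\in I .
\]
So $R(I)\subseteq I$ and $R$ descends. The main obstacle here is checking that $z_x\rhd_R y=0$ for all $y\in\U(\g)$; this is exactly the content of Proposition~\ref{prop:res-env-rpla} applied to the splitting restricted post-Lie algebra, so I would invoke it directly.

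\textbf{Step 3: the idempotent case.} Suppose $R^2=R$ and $p\neq 2$ (so $\lambda=1\neq-1$). By Proposition~\ref{prop:idempotent-rb}, $R(\g)$ is abelian and $R([R(x),y])=0$. Then \eqref{eq:rb-uea} gives $R(xy)=R(x)R(y)$ for $x\in\g$, and induction on word length shows that $R$ is multiplicative on $\U(\g)$. It is already a coalgebra map, hence a bialgebra endomorphism, hence a Hopf endomorphism since it commutes with the antipode. This property passes to the quotient $\uu(\g)$ because $I$ is a Hopf ideal.

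I am unsure where characteristic $\neq2$ is genuinely needed. A plausible source is the subtlety in Proposition~\ref{prop:idempotent-rb}, where vanishing of $(1+\lambda)$-multiples uses $2\neq0$ when $\lambda=1$. I would check that point carefully.
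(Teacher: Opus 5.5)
Your proof is correct. Steps 1 and 3 coincide with the paper's argument. Your diagnosis of where characteristic $\neq 2$ enters is also right: it is exactly $\lambda=1\neq-1$ in Proposition~\ref{prop:idempotent-rb}.

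Step 2 takes a genuinely different route. The paper's terse appeal to \eqref{eq:rb-uea} amounts to an induction on word length that uses only the centrality of $z_x$. For $g\in\g$ and a word $w$, one has $R(g\,wz_x)=R(g)R(wz_x)-R([R(g),w]z_x)$, because $[R(g),z_x]=0$. The base case is $R(z_x)=z_{R(x)}$, so restrictedness of $R$ is used only once.

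You instead pass through the descendent product. Since $z_x$ is primitive and $z_x\rhd_R$ annihilates $\U(\g)$, you get $z_xy=z_x*_Ry$, and hence $R(z_xy)=R(z_x)R(y)=z_{R(x)}R(y)$ in one line. This avoids the induction and shows at the same time that $I$ is a $*_R$-ideal, so the descendent Hopf algebra descends as well. The cost is a second use of restrictedness: you need \eqref{eq:rpl-4} from Theorem~\ref{thm:rpl-rrb} to get $z_x\rhd_R\g=0$.

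Do not replace your direct proof of $z_x\rhd_R y=0$ by an appeal to Proposition~\ref{prop:res-env-rpla}. That proposition only says $\rhd_R$ descends to $\uu(\g)$, i.e.\ $z_x\rhd_R y\in I$. With only that, you get $R(z_xy)=z_{R(x)}R(y)-R(z_x\rhd_R y)$, and controlling the last term requires $R(I)\subseteq I$, which is circular. Your argument does give exact vanishing: check on generators via \eqref{eq:rpl-4} and \eqref{Post-2}, then extend using the derivation property from \eqref{Post-1} together with $z_x\rhd_R 1=\vep(z_x)1=0$. Keep that argument.

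One wording slip: the extended $R$ on $\U(\g)$ is not an associative weight-one Rota-Baxter operator. The identity that descends is \eqref{eq:rb-hopf}, and this uses that $I$ is a Hopf ideal as well as $R$-stable, as you note in Step 3.
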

\begin{proof}
Since the descendent product $*_R$ of $\U(\g)_R$ coincides with the generalized Grossman-Larson product $*_{\rhd_R}$ of $\U(\g)_{\rhd_R}$, we have
$$R(x^p-x^{[p]})\ \stackrel{\eqref{eq:p-map-uea}}{=}\ R(x^{*_R p}-x^{[p]_R})
\ \stackrel{\eqref{eq:rrbl},\,\eqref{eq:rb-hopf}}{=}\ R(x)^p-R(x)^{[p]},\quad x\in\g.$$
Also, $x^p-x^{[p]},\,x\in \g$, is a central primitive element in $\U(\g)$, thus the ideal $I=(x^p-x^{[p]}\,|\,x\in \g)$ is stable under $R$ by Eq~\eqref{eq:rb-uea}. So $R$ can be induced as a Rota-Baxter operator on the restricted enveloping algebra $\uu(\g)$.

If $R$ is an idempotent Rota-Baxter operator of weight $1$ on $\g$ and $\bk$ is not of characteristic $2$, then $R$ is a Lie algebra endomorphism of $\g$ such that $R([R(x),y])=0,\ x,y\in \lieg$ by Proposition~\ref{prop:idempotent-rb}. Now by induction on the length of words in $\U(\g)$, Eq.~\eqref{eq:rb-uea} implies that the extended Rota-Baxter operator $R$ on $\U(\g)$ is an algebra endomorphism. Since $R$ is also a coalgebra endomorphism of $\U(\g)$, it is a Hopf algebra endomorphism, so is the induced Rota-Baxter operator on $\uu(\g)$.
\end{proof}

At the end of this subsection, we provide an expansion formula of the restricted Rota-Baxter operator $R$ on $\g^{(p)}$ given in Theorem~\ref{thm:rrbl}.
In \cite[Theorem~3.5]{Li}, the first author obtained the closed inverse formula for the Guin-Oudom isomorphism $\phi:\U(\g_\rhd)\to \U(\g)_\rhd$ of a post Lie algebra $(\g,[-,-],\rhd)$. It actually implies the following expression of powers in $\U(\g)$ in terms of the generalized Grossman-Larson product $*_\rhd$:
\begin{align}
\label{eq:OG-inverse}
x^n=\sum_{\alpha\vDash n}(-1)^{n-r}C_\alpha x^{\btr \alpha_1}*_\rhd\cdots*_\rhd x^{\btr \alpha_r},\quad x\in\g^{(p)},
\end{align}
where the sum ranges over all compositions $\alpha=(\alpha_1,\dots,\alpha_r)$ of $n$, the coefficient $C_\alpha$ has been defined in Eq.~\eqref{eq:set-part} and
$x^{\btr k}$ is the sum of all $(k-1)!$ iterated post-Lie products of $k$ $x$'s (counting multiplicities). For example,
$$x^{\btr 4}=x\rhd(x\rhd(x\rhd x)) + x\rhd((x\rhd x)\rhd x) + 2(x\rhd x)\rhd(x\rhd x) + (x\rhd(x\rhd x))\rhd x + ((x\rhd x)\rhd x)\rhd x.$$

\begin{prop}
For the restricted Rota-Baxter Lie algebra $(\g^{(p)},[-,-],(-)^p,R)$ of weight $1$ in Theorem~\ref{thm:rrbl}, the following expansion formula of $R$ on $\g^{(p)}$ holds:
\begin{equation*}
R(x^p)=R(x)^p+\sum_{r=1}^{p-1}(-1)^{p-r}\dfrac{1}{r}\sum_{\alpha\vDash p\atop\ell(\alpha)=r} C_\alpha [R(x^{\btr_R \alpha_1}),[\cdots[R(x^{\btr_R \alpha_{r-1}}),R(x^{\btr_R \alpha_r})]\cdots]],\quad x\in \g^{(p)},
\end{equation*}
where $x^{\btr_R k}$ is the sum of all $(k-1)!$ iterated post-Lie products of $k$ $x$'s (counting multiplicities) with respect to $\rhd_R$.
\end{prop}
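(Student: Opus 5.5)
Apply Eq.~\eqref{eq:OG-inverse} to the post-Lie algebra $(\g,[-,-],\rhd_R)$ of weight $1$, for an element $x\in\g^{(p)}$ with $n=p$. Then apply the operator $R$, extended to $\U(\g)$. By \cite[Theorem~3.6]{LST} we have $*_{\rhd_R}=*_R$, and $R(a*_R b)=R(a)R(b)$ for all $a,b\in\U(\g)$. This gives
$$R(x^p)=\sum_{\alpha\vDash p}(-1)^{p-\ell(\alpha)}C_\alpha\, R(x^{\btr_R \alpha_1})\cdots R(x^{\btr_R \alpha_r}),$$
with ordinary products in $\U(\g)$. Each $x^{\btr_R k}$ is an iterated post-Lie product of elements of $\g^{(p)}$, so it lies in $\g^{(p)}$. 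Since $R$ preserves $\g^{(p)}$ (it is a coalgebra map, so it preserves primitives), every $y_k\coloneqq R(x^{\btr_R k})$ is primitive. Note that $y_1=R(x)$.

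Next, rewrite the right-hand side with a characteristic-$p$ Dynkin--Specht--Wever argument, in the same way that Eq.~\eqref{eq:glproduct} is derived from Eq.~\eqref{eq:gl-expand}. The expression $\sum_{\alpha\vDash p}(-1)^{p-r}C_\alpha y_{\alpha_1}\cdots y_{\alpha_r}$ has the same shape as $x^{*_\rhd p}=\sum_\alpha C_\alpha x^{\rhd\alpha}$. The only differences are the sign $(-1)^{p-r}$ and the replacement of $x^{\rhd k}$ by the primitive elements $y_k$.

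To handle this, I would argue universally. Work in the free setting where $y_1,y_2,\dots$ are free primitive generators with $y_k$ of degree $k$. The element $Z=\sum_{\alpha\vDash p}(-1)^{p-r}C_\alpha y_\alpha$ is primitive. One way to see this is that it is the degree-$p$ part of a group-like exponential-type expression. Alternatively, specialize the identity \eqref{eq:OG-inverse} itself, which shows $x^p$ is primitive, to a free post-Lie algebra.

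In characteristic $p$, a homogeneous primitive element of $T(V)$ is a Lie polynomial plus $p$-th powers of Lie polynomials. Here the only term of that kind is the $\ell(\alpha)=p$ term $y_1^p=R(x)^p$, which has coefficient $C_{(1,\dots,1)}=1$ and sign $+1$. Dynkin's projection $y_{i_1}\cdots y_{i_r}\mapsto \frac1r[y_{i_1},[\cdots,y_{i_r}]]$ then identifies the Lie part word by word: on a homogeneous Lie element of word length $r$ it acts as multiplication by $r$, and $r<p$ is invertible. This yields
$$Z=y_1^p+\sum_{r=1}^{p-1}(-1)^{p-r}\tfrac1r\sum_{\ell(\alpha)=r}C_\alpha[y_{\alpha_1},[\cdots,y_{\alpha_r}]],$$
which is exactly the claimed formula.

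The main obstacle is justifying this Dynkin step, that is, proving primitivity of $Z$ as a formal identity in the free primitive generators $y_k$ rather than only after evaluation. My first approach is to transport \eqref{eq:OG-inverse} through the universal (Foissy) free post-Lie algebra on one generator $x$, where \eqref{eq:OG-inverse} is a formal identity. There, applying the inverse Guin--Oudom isomorphism shows that $\sum(-1)^{p-r}C_\alpha x^{\btr\alpha_1}\cdots x^{\btr\alpha_r}$, with ordinary product in $\U(\g_\rhd)$, equals $\phi^{-1}(x^p)$. This element is primitive because $\phi^{-1}$ is a Hopf isomorphism and $x^p$ is primitive. The remaining task is to check that the $x^{\btr k}$ generate freely enough that the Lie-polynomial reading is legitimate. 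If that fails, the fallback is to follow the argument of \cite[Theorem~2.3]{EG} for \eqref{eq:glproduct} verbatim, inserting the signs $(-1)^{p-r}$ and using that Lemma~\ref{lemma:adjoint_action}-type identities are invariant under the substitution $x^{\rhd k}\mapsto y_k$.
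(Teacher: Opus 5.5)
Your proposal is correct and follows the paper's proof: expand $x^p$ by Eq.~\eqref{eq:OG-inverse}, use $*_{\rhd_R}=*_R$ and $R(a*_R b)=R(a)R(b)$ to reach ordinary products of the $R(x^{\btr_R k})$, and then pass from products to brackets by the characteristic-$p$ Dynkin--Specht--Wever theorem; the paper simply invokes this last step, and you justify it in more detail.

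The free-primitivity obstacle you flag can be settled without any freeness of the $x^{\btr k}$, as follows.
\begin{itemize}
\item The Hopf automorphism $y_k\mapsto -y_k$ of $T(W)$ turns $Z$ into $(-1)^p Y_p$, where $Y_n=\sum_{\alpha\vDash n}C_\alpha y_{\alpha_1}\cdots y_{\alpha_r}$.
\item Over $\mathbb{Q}$, $\sum_n Y_n t^n/n!$ is the group-like solution of $X'=X\sum_{k\geq1}y_k t^{k-1}/(k-1)!$ with $X(0)=1$.
\item Since the $C_\alpha$ are integers, this gives $\Delta Y_p=\sum_i\binom{p}{i}Y_i\otimes Y_{p-i}$ over $\mathbb{Z}$, so $Y_p$, and hence $Z$, is primitive mod $p$.
\end{itemize}
This is the same fact that underlies Eq.~\eqref{eq:glproduct}.
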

\begin{proof}
For any $x\in \g^{(p)}$,  we clearly have $R(x^p)\in \g^{(p)}$. Since $*_{\rhd_R}=*_R$,
\begin{eqnarray*}
R(x^p) &\stackrel{\eqref{eq:OG-inverse}}{=}&
R\left(x^{*_R p}\right) +\sum_{r=1}^{p-1}(-1)^{p-r}\sum_{\alpha\vDash p\atop\ell(\alpha)=r}C_\alpha R\left(x^{\btr_R \alpha_1}*_R\cdots*_R x^{\btr_R \alpha_r}\right)\\
&\stackrel{\eqref{eq:rb-hopf}}{=}& R(x)^p +\sum_{r=1}^{p-1}(-1)^{p-r}\sum_{\alpha\vDash p\atop\ell(\alpha)=r}C_\alpha R(x^{\btr_R \alpha_1})\cdots R(x^{\btr_R \alpha_r}).
\end{eqnarray*}
So  we get the desired formula, by applying the Dynkin-Specht-Wever Theorem in characteristic $p$ to the last equality.
\end{proof}

\subsection{Rota-Baxter $p$-envelopes of a Rota-Baxter Lie algebra}
Referring to the construction of $p$-envelopes of a post-Lie algebra in Section~\ref{sec:rpla}, we also study $p$-envelopes of a Rota-Baxter Lie algebra.
\begin{defn}
	Let $(\lieg,(-)^{[p]},R)$ be a restricted Rota-Baxter Lie algebra over $\bk$.
	A Rota-Baxter subalgebra $\lieh\subseteq \g$
is called a {\bf Rota-Baxter $p$-subalgebra} if $x^{[p]}\in \lieh, \forall x\in \lieh$. 
	
	For a subset $E\subseteq \g$, the intersection of all Rota-Baxter $p$-subalgebras containing $E$ will be denoted by $E_p$, which is a Rota-Baxter $p$-subalgebra of $\g$ and is referred to as the Rota-Baxter $p$-subalgebra generated by $E$ in $\g.$
\end{defn}	

\begin{defn}
Let $(\g, R)$ be a Rota-Baxter Lie algebra over $\bk$.

A tuple $(\liel, (-)^{[p]'}, R', i)$ consisting of a restricted Rota-Baxter Lie algebra $(\liel, (-)^{[p]'}, R')$ and a Rota-Baxter Lie algebra homomorphism $i:\g\to \liel$ is called a {\bf Rota-Baxter $p$-envelope} of $\g$ if $i$ is injective and $i(\g)_p=\liel.$
	
A Rota-Baxter $p$-envelope $(\liel, (-)^{[p]'}, R', i)$ of $\g$ is called {\bf universal} if it satisfies the following universal property: for any restricted Rota-Baxter Lie algebra $\liec$ and Rota-Baxter Lie algebra homomorphism $f:\g\to \liec$, there exists a unique restricted Rota-Baxter Lie algebra homomorphism $g: \liel\to \liec$ such that $g\circ i=f.$
\end{defn}	

\begin{theorem}\label{thm:urbe}
Every Rota-Baxter Lie algebra $(\g,R)$ of weight $1$ over $\bk$ has a universal Rota-Baxter $p$-envelope $(\g^{(p)},(-)^p,R,i)$, where $i:\g\to \g^{(p)}$ is the natural embedding.
\end{theorem}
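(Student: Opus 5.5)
The plan is to follow the proof of Theorem~\ref{thm:pla-rpla} closely, replacing the post-Hopf structure by the Rota-Baxter Hopf structure on $\U(\g)$. There are three steps.

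\textbf{Step 1: $(\g^{(p)},(-)^p,R,i)$ is a Rota-Baxter $p$-envelope.} By Theorem~\ref{thm:rrbl}, $(\g^{(p)},[-,-],(-)^p,R)$ is a restricted Rota-Baxter Lie algebra of weight $1$. The embedding $i:\g\to\g^{(p)}$ is a Rota-Baxter Lie algebra homomorphism, because the extended operator $R$ on $\U(\g)$ restricts to the original $R$ on $\g$. For generation, let $\lieh\subseteq\g^{(p)}$ be any Rota-Baxter $p$-subalgebra containing $\g$. Then $\lieh$ contains all $x^{p^k}$ with $x\in\g$, hence their span $\g^{(p)}$. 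So $i(\g)_p=\g^{(p)}$; the Rota-Baxter condition only shrinks the family of subalgebras being intersected, and the full space is already forced.

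\textbf{Step 2: existence of the extension.} Let $(\liec,(-)^{[p]'},R')$ be a restricted Rota-Baxter Lie algebra of weight $1$ and $f:\g\to\liec$ a Rota-Baxter Lie algebra homomorphism. By Goncharov's extension \eqref{eq:rb-uea}, $R'$ extends to $\U(\liec)$. By Theorem~\ref{thm:rrbl-uea}, it descends to a Rota-Baxter operator, still denoted $R'$, on $\uu(\liec)$. Let $\bar f:\U(\g)\to\uu(\liec)$ be the Hopf algebra map induced by $f$.

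We show $\bar f\circ R=R'\circ\bar f$ on $\U(\g)$ by induction on word length, using \eqref{eq:rb-uea} on both sides. For $x\in\g$ and $y\in\U(\g)$:
$$\bar f(R(xy))=\bar f(R(x))\bar f(R(y))-\bar f(R([R(x),y]))=R'(f(x))R'(\bar f(y))-R'([R'(f(x)),\bar f(y)])=R'(\bar f(xy)).$$
Here $[R(x),y]$ is a commutator in $\U(\g)$ of the same length as $y$, so the induction hypothesis applies to it. The base case uses $f\circ R=R'\circ f$ on $\g$.

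As in Theorem~\ref{thm:pla-rpla}, $\bar f(x^{p^k})=f(x)^{[p]'^k}\in\liec$. Hence $g\coloneqq\bar f|_{\g^{(p)}}:\g^{(p)}\to\liec$ is well defined and is a Lie homomorphism satisfying $g(X^p)=g(X)^{[p]'}$ for $X\in\g^{(p)}$, since $\bar f$ is an algebra map and $\liec\subseteq\uu(\liec)$ carries $(-)^{[p]'}$ as the $p$-th power. It intertwines $R$ and $R'$ by the induction above. Thus $g$ is a restricted Rota-Baxter homomorphism with $g\circ i=f$.

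\textbf{Step 3: uniqueness.} Any two such extensions agree on $\g$ and preserve brackets, $p$-maps and Rota-Baxter operators. The subset on which they agree is therefore a Rota-Baxter $p$-subalgebra containing $\g$, so by Step 1 it is all of $\g^{(p)}$.

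The main obstacle is Step 2. One must check that the induction is well founded, since the commutator $[R(x),y]$ does not increase word length in the PBW filtration. One must also confirm that $\liec\to\uu(\liec)$ is injective, which follows from PBW for restricted enveloping algebras, so that restricting $\bar f$ gives a map into $\liec$ itself.
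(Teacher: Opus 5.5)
Your proposal is correct and follows the same route as the paper. You use Theorem~\ref{thm:rrbl} for the envelope and Theorem~\ref{thm:rrbl-uea} to descend $R'$ to $\uu(\liec)$. You then use the universal property of $\U(\g)$ to get $\bar f$ and restrict it to $\g^{(p)}$ exactly as in Theorem~\ref{thm:pla-rpla}. Your word-length induction via \eqref{eq:rb-uea} (with $R(1)=1=R'(1)$ covering length zero) supplies the check, only asserted in the paper, that $\bar f$ intertwines the Rota-Baxter operators, and your equalizer argument makes the uniqueness step explicit.
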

\begin{proof}
By Theorem \ref{thm:rrbl}, $\g^{(p)}\subset \U(\g)$ is a restricted Rota-Baxter Lie algebra of weight $1$ containing $\g$. Since $\g^{(p)}$ is the $p$-subalgebra of $\U(\g)^-$ generated by $\g$, it is clear that $(\g^{(p)},(-)^p,R,i)$ is a Rota-Baxter $p$-envelope of $\g$.
	
Now, for each restricted Rota-Baxter Lie algebra $(\liel,(-)^{[p]'}, R')$ and a Rota-Baxter Lie algebra homomorphism $f:\g \to \liel$, we can apply Theorem~\ref{thm:rrbl-uea} to $(\liel,(-)^{[p]'}, R')$ and use the universal property of $\U(\g)$ to obtain a Rota-Baxter Hopf algebra homomorphism $\bar{f}: \U(\g)\to \uu(\liel)$ such that $\bar f\circ i=f$.
Then, similar to the end of the proof in Theorem~\ref{thm:pla-rpla}, we see that $g\coloneqq \bar{f}|_{\g^{(p)}}: \g^{(p)} \to \liel$ is a restricted Rota-Baxter Lie algebra homomorphism such that $g\circ i=f$, and $(\g^{(p)},(-)^p,R,i)$ is a universal Rota-Baxter $p$-envelope of $(\g,R)$.
\end{proof}

\vspace{0.1cm}
 \noindent
{\bf Acknowledgements.} We would like to thank Honglei Lang for inspiring us to give the graph subalgebra characterization of restricted Rota-Baxter operators. This work is supported by National Natural Science Foundation of China (12071094, 12461005), Basic and Applied Basic Research Foundation of Guangdong Province (2026A1515012750), and Basic Research Program of Yunnan Province (202601AT070218).

\bibliographystyle{amsplain}

\end{document}